\documentclass[11pt,reqno]{amsart}

 \usepackage{amsfonts,amsmath,amscd,amssymb,amsbsy,amsthm,amstext,amsopn,amsxtra}
 \usepackage{color,fullpage,mathrsfs}
 \usepackage{subfigure}
 \usepackage{verbatim} 
 \usepackage{stmaryrd}
 \usepackage{extarrows}
 \usepackage[all]{xy}
 \usepackage{bm}   
 \usepackage{hyperref}
 \usepackage{tikz}
 \usepackage{tikz-cd}
 \usepackage{enumitem}
 \usetikzlibrary{calc}

\usepackage{cancel}
\usepackage{color}
\usepackage{tikz}

 \numberwithin{equation}{section}
 \setcounter{tocdepth}{1}

 \hypersetup{colorlinks,linkcolor=[rgb]{0,0,1},citecolor=[rgb]{1,0,0}}


\newtheorem{theorem}{Theorem}[section]
\newtheorem{lemma}[theorem]{Lemma}
\newtheorem{proposition}[theorem]{Proposition}
\newtheorem{corollary}[theorem]{Corollary}
\newtheorem{conjecture}[theorem]{Conjecture}
\newtheorem{assumption}[theorem]{Assumption}
\newtheorem{property}[theorem]{Property}

\theoremstyle{definition}
\newtheorem{definition}[theorem]{Definition}
\newtheorem{example}[theorem]{Example}

\newtheorem{remark}[theorem]{Remark}
\newtheorem{remarks}[theorem]{Remarks}


\newcommand{\one}{\ensuremath{(\mathrm{i})}}
\newcommand{\two}{\ensuremath{(\mathrm{ii})}}
\newcommand{\three}{\ensuremath{(\mathrm{iii})}}

\newcommand{\kk}{\ensuremath{\Bbbk}} 
 
\newcommand{\QQ}{\ensuremath{\mathbb{Q}}}

\newcommand{\ZZ}{\ensuremath{\mathbb{Z}}}

 \newcommand{\modA}{\operatorname{mod}(\ensuremath{A})}
\newcommand{\chamber}{C}
\newcommand{\coh}{\operatorname{coh}}

\newcommand{\Dfin}{D^b_{\operatorname{fin}}}

\newcommand{\Dperf}{D^b_{\operatorname{perf}}}
\newcommand{\End}{\operatorname{End}}
\newcommand{\Ext}{\operatorname{Ext}}
\newcommand{\ghilb}{\ensuremath{\Gamma}\operatorname{-Hilb}}

\newcommand{\Hom}{\operatorname{Hom}}
\newcommand{\id}{\operatorname{id}} 

\newcommand{\Kfin}{K_{\operatorname{fin}}}

\newcommand{\Kperf}{K_{\operatorname{perf}}}
\newcommand{\Knum}{K^{\operatorname{num}}}
\newcommand{\Knumc}{K^{\operatorname{num}}_c}
\newcommand{\Knumperf}{K_{\operatorname{perf}}^{\operatorname{num}}}
\newcommand{\ltensor}{\overset{\mathbf{L}}{\otimes}}

\newcommand{\NS}{\operatorname{NS}}

\newcommand{\Pic}{\operatorname{Pic}}

\newcommand{\rank}{\operatorname{rk}}

\newcommand{\SL}{\operatorname{SL}}
\newcommand{\SO}{\operatorname{SO}}
\newcommand{\Spec}{\operatorname{Spec}}
\newcommand{\supp}{\operatorname{supp}}

\title{Gale duality and the linearisation map for 
noncommutative crepant resolutions}

\author{Alastair Craw} 
\address{Department of Mathematical Sciences, 
University of Bath, 
Claverton Down, 
Bath BA2 7AY, 
UK.}
\email{a.craw@bath.ac.uk}
\urladdr{http://people.bath.ac.uk/ac886/}

 \begin{document}

\maketitle

\begin{abstract}
 We describe in geometric terms the map that is Gale dual to the linearisation map for quiver moduli spaces associated to noncommutative crepant resolutions in dimension three. This allows us to formulate Reid's recipe in this context in terms of a pair of integer-valued matrices, one of which appears to satisfy an attractive sign-coherence property.  We provide some new evidence for a conjecture,  known to hold in the toric case, which implies that a minimal generating set of relations between the determinants of the tautological bundles encodes the supports of the images of the vertex simples under the derived equivalence, and vice-versa.
\end{abstract}

 \tableofcontents
 
\section{Introduction}

 \subsection{The two key maps}
 The notion of a noncommutative crepant resolution (NCCR)~\cite{VdB04}, and more generally, that of a maximal modification algebra (MMA) \cite{IyamaWemyss14}, unifies the study of several natural classes of noncommutative threefold, including the skew group algebra of a finite subgroup of $\SL(3,\kk)$, the Jacobi algebra for a consistent dimer model on a real two-torus, and the endomorphism algebra of a tilting bundle on a local threefold flopping contraction~\cite{Wemyss18,Karmazyn17}. Every such algebra admits a presentation as a quiver algebra $A\cong \kk Q/I$, and a tilting bundle on a fine quiver moduli space $X=\mathcal{M}_\theta(A,v)$ determines a derived equivalence between $A$ and $X$. Here we consider two apparently quite different collections of geometric objects on $X$ arising from $A$ via the derived equivalence, and we uncover a strong link between the two that we phrase in terms of Gale duality between two abelian group homomorphisms. 

 On one hand, the indecomposable projective $A$-modules $P_i$ for vertices $i\in Q_0$ determine the tautological locally free sheaves $T_i$ on $X$. Our first map, known as the \emph{linearisation map} for 
 the quiver moduli space $\mathcal{M}_\theta(A,v)$, is a map of finitely generated and free abelian groups induced by sending the class of each $P_i$ in the Grothendick group $K(A)$ to the numerical class of the line bundle $\det(T_i)$ on $X$. For a fixed algebra $A$ and dimension vector $v$, this map depends on the GIT chamber $C$ containing the generic stability parameter $\theta$, so we write it as
 \[
 L_C\colon \Theta\longrightarrow \NS(X),
 \]
 where $\Theta$ is a corank-one lattice in $K(A)$ and $\NS(X)$ is the N\'{e}ron--Severi group of $X$. Note that $L_C(\theta)$ is the polarising ample line bundle on $\mathcal{M}_\theta(A,v)$ given by the GIT quotient construction.

 On the other hand, the vertex simple modules $S_i$ for $i\in Q_0$ determine objects $\Psi(S_i)$ in the bounded derived category of coherent sheaves with compact support on $X$; here, $\Psi$ is the tilting equivalence determined by the (dual to the) tautological bundle on $\mathcal{M}_\theta(A,v)$. The numerical Grothendieck group $\Knumc(X)$ for objects with compact support on $X$ admits a dimension filtration $\{0\}\subseteq F_0\subseteq F_1\subseteq F_2=\Knumc(X)$, and we demonstrate that the map Gale dual (see Section~\ref{sec:Galedual}) to the linearisation map $L_C$ is a map of the form
 \[
 G_C\colon F_2/F_0\longrightarrow F_2/F_1
 \]
 that is defined in terms of the support (with multiplicities) of the objects $\Psi(S_i)$ for $i\in Q_0$.
  
 This paper describes the relationship between the line bundles $\det(T_i)$ and the objects $\Psi(S_i)$ for $i\in Q_0$ via Gale duality between $L_C$ and $G_C$.  For a specific GIT chamber $C_+$, these two collections of geometric objects have been encoded before, at least in the toric case, via rival interpretations of the combinatorial cookery known as Reid's recipe~\cite{Reid97, CHT21}. Here we reconcile these rival interpretations by demonstrating they are equivalent. Our approach via $L_{C_+}$ or $G_{C_+}$ allows us to formulate Reid's recipe in the non-toric case, and it suggests that a matrix representing $G_{C_+}$ is `sign-coherent'; this surprising phenomenon would follow from a proof of the Trichotomy Conjecture~\ref{conj:CLintro}. 
   
   \subsection{The main result and an illuminating example}
   We now present the results in more detail. We assume for now that $A$ is an NCCR\footnote{A version of Theorem~\ref{thm:GCintro} holds in the more general situation where $A$ is an MMA, see Section~\ref{sec:MMA}.} satisfying a technical condition (see Assumption~\ref{ass:KrullSchmidt}) which implies in particular that the vertex simple modules exist.
   In addition, assume that the dimension vector $v$ has a component $v_0=1$, and write $0\in Q_0$ for the corresponding vertex.
  
 \begin{theorem}
 \label{thm:GCintro}
 Let $A$ be an NCCR of $R$ satisfying Assumption~\ref{ass:KrullSchmidt} such that $v$ is indivisible. For any GIT chamber $C\subset \Theta\otimes_{\ZZ} \QQ$, then:
 \begin{enumerate}
     \item[\one] the classes $[\Psi(S_i)]$ for $i\neq 0$ form a $\ZZ$-basis of $F_2/F_0$; and
     \item[\two] the Gale dual to $L_{\chamber}$ is the $\ZZ$-linear map $G_{\chamber}\colon F_2/F_0 \rightarrow F_2/F_1$
 satisfying  
 \begin{equation}
     \label{eqn:GCintro}
 G_{\chamber}\big([\Psi(S_i)]\big) = \sum_{k\in \ZZ} (-1)^k \sum_{V} \ell_V\Big(\mathcal{H}^k\big(\Psi(S_i)\big)\Big)[\mathcal{O}_{V}]
 \end{equation}
  for $i\in Q_0$, where the second sum is taken over all irreducible surfaces in the support of the cohomology sheaf $\mathcal{H}^k\big(\Psi(S_i))$.
 \end{enumerate}
\end{theorem}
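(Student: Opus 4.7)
\emph{Part (i).}
By Assumption~\ref{ass:KrullSchmidt}, the simple $A$-modules $\{S_i\}_{i\in Q_0}$ form a $\ZZ$-basis of $\Kfin(A)$. The tilting equivalence $\Psi$ induces an isomorphism $\Kfin(A)\xrightarrow{\sim}\Knumc(X)=F_2$ sending $[S_i]$ to $[\Psi(S_i)]$, so the latter classes form a $\ZZ$-basis of $F_2$. A closed point $p\in X$ represents an $A$-module $M_p$ with $[M_p]=\sum_i v_i[S_i]$, whence $[\mathcal{O}_p]=\sum_i v_i[\Psi(S_i)]$ inside $F_0\cong\ZZ$ (generated by $[\mathcal{O}_p]$ since $X$ is connected and $v$ is indivisible). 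Using $v_0=1$, the single relation $\sum_i v_i[\Psi(S_i)]=0$ in $F_2/F_0$ solves for $[\Psi(S_0)]$, leaving $\{[\Psi(S_i)]\}_{i\neq 0}$ as a free $\ZZ$-basis.

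\emph{Part (ii).}
A preliminary observation simplifies the statement: the right-hand side of~(\ref{eqn:GCintro}) is precisely the image of $[\Psi(S_i)]\in F_2$ under the natural surjection $F_2\twoheadrightarrow F_2/F_1$, since for any coherent sheaf $\mathcal{F}$ supported in dimension $\leq 2$ one has $[\mathcal{F}]\equiv\sum_V\ell_V(\mathcal{F})[\mathcal{O}_V]\pmod{F_1}$ where $V$ ranges over the irreducible surface components of $\mathrm{supp}(\mathcal{F})$. Therefore $G_\chamber$ coincides with the canonical quotient morphism of the dimension filtration, and the content of (ii) is that this canonical map is Gale dual to $L_\chamber$.

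I would show this by realising $L_\chamber$ and $G_\chamber$ as transpose maps under natural pairings. Fix the basis $\{\theta_i:=[P_i]-v_i[P_0]\}_{i\neq 0}$ of $\Theta$; under the Euler pairing transported by $\Psi$, it is dual to the basis $\{[\Psi(S_i)]\}_{i\neq 0}$ of $F_2/F_0$. On the target side, use the intersection pairing $\NS(X)\times F_2/F_1\to\ZZ$ given by $(L,[\mathcal{O}_V])\mapsto L\cdot V$. The key input is the Euler-pairing identity
\[
\chi_X(T_j^\vee,\Psi(S_i))=\chi_A(P_j,S_i)=\delta_{ij}
\]
furnished by the derived equivalence. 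Hirzebruch--Riemann--Roch expresses the left-hand side as an integral of Chern characters and Todd classes on $X$, whose top-order contribution couples $c_1(T_j)\in\NS(X)$ with the codimension-one part $\mathrm{ch}_1(\Psi(S_i))=\sum_V\ell_V(\Psi(S_i))[V]$ of $\Psi(S_i)$, where $\ell_V(\Psi(S_i)):=\sum_k(-1)^k\ell_V(\mathcal{H}^k(\Psi(S_i)))$. With the normalisation $T_0\cong\mathcal{O}_X$ we have $c_1(T_j)=L_\chamber(\theta_j)$, so this top-order term is exactly $\langle L_\chamber(\theta_j),G_\chamber([\Psi(S_i)])\rangle$ under the pairings above. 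Reading off the two matrices in the chosen bases yields the transposition relation required by the definition of Gale duality in Section~\ref{sec:Galedual}.

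\emph{Main obstacle.}
The principal technical difficulty is the bookkeeping within the HRR expansion: lower-order terms involving $\mathrm{ch}_2(\Psi(S_i))$, $\mathrm{ch}_3(\Psi(S_i))$ and Todd-class corrections contribute to pairings with the $F_1/F_0$ and $F_0$ layers of the dimension filtration, and one must verify that these lower-order contributions do not interfere with the top-order identity required for Gale duality, and that the resulting transpose relation is integral rather than merely rational. A secondary subtlety is confirming that the abstract pairings used here coincide with those implicit in the Gale-duality construction of Section~\ref{sec:Galedual}, including the correct handling of the sublattice $\Theta\subset K(A)$ and any choices of normalisation such as $T_0\cong\mathcal{O}_X$.
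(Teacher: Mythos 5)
Your part (i) and your preliminary reduction of (ii) — namely that the right-hand side of \eqref{eqn:GCintro} is just the image of $[\Psi(S_i)]$ under the canonical quotient $F_2\to F_2/F_1$, so the whole content is that this quotient map is Gale dual to $L_C$ — both match the paper (Lemma~\ref{lem:NumKgroupX} and Proposition~\ref{prop:Knumclass}). The gap is in how you then establish the Gale duality itself. By the definition in Section~\ref{sec:Galedual}, the Gale dual has target $\ker(L_C)^\vee$, so what must be produced is an identification of $F_2/F_1$ with $\ker(L_C)^\vee$ under which $G_C$ becomes the dual of the inclusion $\ker(L_C)\subset\Theta$. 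Your plan instead tries to verify a ``transposition relation'' by pairing the \emph{targets} of $L_C$ and $G_C$, via a pairing $\NS(X)\times F_2/F_1\to\ZZ$, $(L,[\mathcal{O}_V])\mapsto L\cdot V$. That pairing is not defined: on a threefold, a divisor class dotted with a surface class is a curve class, not an integer; and no repair can make it perfect, because $\NS(X)$ is dual to the curve classes $F_1/F_0$ (Corollary~\ref{cor:NS}), and in general $\operatorname{rk}\NS(X)\neq\operatorname{rk}(F_2/F_1)$ — for instance when $f_\theta$ has fibres of dimension at most one, $F_2/F_1=0$ while $\NS(X)\neq 0$ (Example~\ref{exa:HomMMP}). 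Moreover, even a well-defined identity pairing $L_C(\theta_j)$ against $G_C([\Psi(S_i)])$ would not characterise the Gale dual: the Gale dual is represented by the transpose of a kernel matrix $K$ of $L_C$, not of $L_C$ itself, so the HRR computation you flag as the main obstacle is aimed at the wrong identity (and your input $\chi(T_j^\vee,\Psi(S_i))=\delta_{ij}$ only yields the identification $\Theta^\vee\cong F_2/F_0$, which you already use in part (i)).

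What is actually needed, and what the paper does, is the second dual identification: show that $\rank$ and $\det$ descend to $\Knumperf(X)$ and that $F^1:=\ker(\rank)=F_0^\perp$ and $F^2:=\ker(\det)\cap\ker(\rank)=F_1^\perp$ (Proposition~\ref{prop:Fmperp}, proved by elementary Euler-characteristic and degree computations on points and curves — no Hirzebruch--Riemann--Roch, which also disposes of your integrality worry); then use $\psi$ to identify $\Theta\cong F^1$ and $\ker(L_C)\cong F^2$, with $L_C$ corresponding to $\det(-)^{-1}$ (Lemma~\ref{lem:CIrevisited}); finally dualise, so that $(F^1)^\vee\cong F_2/F_0$ and $(F^2)^\vee\cong F_2/F_1$, and the dual of the inclusion $F^2\subset F^1$ is exactly the canonical quotient $F_2/F_0\to F_2/F_1$ (Lemma~\ref{lem:CIdual}). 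Without identifying $\ker(L_C)$ with the sublattice $F^2$ of rank-zero classes with numerically trivial determinant, your argument never connects $F_2/F_1$ to $\ker(L_C)^\vee$, and so cannot conclude Gale duality.
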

  
 Even the doting parents of this result would admit that the baby is not very attractive, but its charming personality is revealed when $A$ is a toric algebra and the GIT chamber $C_+$ contains all 0-generated stability conditions (see \eqref{eqn:C+}). Under these assumptions, the following statement was established in \cite[Theorem~1.4]{BCQ15}, motivated by the insight from \cite[Theorem~1.1]{CL09}.
 
 \begin{property}
 \label{property}
 For each vertex $i\neq 0$, the cohomology sheaves $\mathcal{H}^k\big(\Psi(S_i)\big)$ are non-zero for a unique integer $k=k(i)\in \{-1,0\}$.
 \end{property}
 
 Whenever Property~\ref{property} holds, the first sum from \eqref{eqn:GCintro} collapses and the support with multiplicities of the objects $\Psi(S_i)$ can be read off directly from \eqref{eqn:GCintro}. 
  
 \begin{example}
 \label{exa:1315}
 Consider the action of type $\frac{1}{19}(1,3,15)$, so $\Gamma:=\ZZ/19\subset \SL(3,\kk)$ acts on $\mathbb{A}^3$ with generator $\text{diag}(\varepsilon, \varepsilon^3,\varepsilon^{15})$ for $\varepsilon$ a primitive 19th root of unity. If we identify the vertex set of the McKay quiver $Q$ with $Q_0=\{0,1,\dots,18\}$, where vertex 0 corresponds to the trivial representation of $\Gamma$, then Reid's recipe ~\cite{Reid97,Craw05} labels every proper torus-invariant curve and surface in the crepant resolution $X=\mathcal{M}_\theta(A,v)\cong \Gamma\text{-Hilb}(\mathbb{A}^3)$ of $\mathbb{A}^3/\Gamma$ with a non-zero vertex of $Q$. 
\begin{figure}[!ht]
\begin{tikzpicture}[xscale=2,yscale=2]
\draw (0,0) -- (2,3.46) -- (4,0) -- (0,0); 
\draw (0,0) node[circle,draw,fill=black,minimum size=5pt,inner sep=1pt] {{\small .}};
\draw (0.737,0.183) node[circle,draw,fill=black,minimum size=5pt,inner sep=0pt] {{\small .}};
\draw (1.157,1.277) node[circle,draw,fill=black,minimum size=5pt,inner sep=0pt] {{\small .}};
\draw (1.578,2.37) node[circle,draw,fill=black,minimum size=5pt,inner sep=0pt] {{\small .}};
\draw (2,3.46) node[circle,draw,fill=black,minimum size=5pt,inner sep=0pt] {{\tiny .}};
\draw (1.473,0.367) node[circle,draw,fill=black,minimum size=5pt,inner sep=0pt] {{\tiny .}};
\draw (1.893,1.46) node[circle,draw,fill=black,minimum size=5pt,inner sep=0pt] {{\tiny .}};
\draw (2.314,2.553) node[circle,draw,fill=black,minimum size=5pt,inner sep=0pt] {{\tiny .}};
\draw (2.21,0.55) node[circle,draw,fill=black,minimum size=5pt,inner sep=0pt] {{\tiny .}};
\draw (2.631,1.644) node[circle,draw,fill=black,minimum size=5pt,inner sep=0pt] {{\tiny .}};
\draw (2.946,0.734) node[circle,draw,fill=black,minimum size=5pt,inner sep=0pt] {{\tiny .}};
\draw (4,0) node[circle,draw,fill=black,minimum size=5pt,inner sep=0pt] {{\tiny .}};
\draw (0,0) -- (2.946,0.734) -- (4,0); 
\draw (0,0) -- (1.157,1.277) -- (2.631,1.644) -- (4,0); 
\draw (0,0) -- (1.578,2.37) -- (2.314,2.553) -- (4,0); 
\draw (2,3.46) -- (0.737,0.183) -- (4,0); 
\draw (2.314,2.553) -- (1.473,0.367) -- (4,0); 
\draw (2.631,1.644) -- (2.21,0.55) -- (4,0); 
\draw (2,3.46) -- (2.946,0.734); 
\draw (1.578,2.37) -- (2.21,0.55);
\draw (1.157,1.277) -- (1.473,0.367);  
\draw (2,3.46) -- node [circle,draw,fill=white,inner sep=1pt] {\textcolor{red}{{\small 3}}} (2.314,2.553);
\draw (2.314,2.553) -- node [circle,draw,fill=white,inner sep=1pt] {\textcolor{red}{{\small 3}}} (2.631,1.644);
\draw (2.631,1.644) -- node [circle,draw,fill=white,inner sep=1pt] {\textcolor{red}{{\small 3}}} (2.946,0.734);
\draw (0,0) -- node [circle,draw,fill=white,inner sep=1pt] {\textcolor{red}{{\small 3}}} (0.737,0.183);
\draw (0.737,0.183) -- node [circle,draw,fill=white,inner sep=1pt] {\textcolor{red}{{\small 3}}} (1.473,0.367);
\draw (1.473,0.367) -- node [circle,draw,fill=white,inner sep=1pt] {\textcolor{red}{{\small 3}}} (2.21,0.55);
\draw (2.21,0.55) -- node [circle,draw,fill=white,inner sep=1pt] {\textcolor{red}{{\small 3}}} (2.946,0.734);
\draw (2.946,0.734) -- node [circle,draw,fill=white,inner sep=1pt] {\textcolor{red}{{\small 3}}} (4,0);
\draw (1.578,2.37) -- node [circle,draw,fill=white,inner sep=1pt] {\textcolor{red}{{\small 7}}} (1.893,1.46);
\draw (1.893,1.46) -- node [circle,draw,fill=white,inner sep=1pt] {\textcolor{red}{{\small 7}}} (2.21,0.55);
\draw (2.21,0.55) -- node [circle,draw,fill=white,inner sep=1pt] {\textcolor{red}{{\small 7}}} (4,0);
\draw (1.157,1.277) -- node [circle,draw,fill=white,inner sep=0.5pt] {\textcolor{red}{{\small 11}}} (1.473,0.367);
\draw (2,3.46) -- node [circle,draw,fill=white,inner sep=0.5pt] {\textcolor{red}{{\small 15}}} (1.578,2.37);
\draw (1.578,2.37) -- node [circle,draw,fill=white,inner sep=0.5pt] {\textcolor{red}{{\small 15}}} (1.157,1.277);
\draw (1.157,1.277) -- node [circle,draw,fill=white,inner sep=0.5pt] {\textcolor{red}{{\small 15}}} (0.737,0.183);
\draw (0.737,0.183) -- node [circle,draw,fill=white,inner sep=0.5pt] {\textcolor{red}{{\small 15}}} (4,0);
\draw (1.473,0.367) -- node [circle,draw,fill=white,inner sep=0.5pt] {\textcolor{red}{{\small 11}}} (4,0);
\draw (0,0) -- node [circle,draw,fill=white,inner sep=1pt] {\textcolor{red}{{\small 2}}} (1.157,1.277);
\draw (1.157,1.277) -- node [circle,draw,fill=white,inner sep=1pt] {\textcolor{red}{{\small 2}}} (1.893,1.46);
\draw (1.893,1.46) -- node [circle,draw,fill=white,inner sep=1pt] {\textcolor{red}{{\small 2}}} (2.631,1.644);
\draw (2.631,1.644) -- node [circle,draw,fill=white,inner sep=1pt] {\textcolor{red}{{\small 2}}} (4,0);
\draw (0,0) -- node [circle,draw,fill=white,inner sep=1pt] {\textcolor{red}{{\small 1}}} (1.578,2.37);
\draw (1.578,2.37) -- node [circle,draw,fill=white,inner sep=1pt] {\textcolor{red}{{\small 1}}} (2.314,2.553);
\draw (2.314,2.553) -- node [circle,draw,fill=white,inner sep=1pt] {\textcolor{red}{{\small 1}}} (4,0);
\draw (1.473,0.367) -- node [circle,draw,fill=white,inner sep=0.5pt] {\textcolor{red}{{\small 12}}} (1.893,1.46);
\draw (1.893,1.46) -- node [circle,draw,fill=white,inner sep=0.5pt] {\textcolor{red}{{\small 12}}} (2.314,2.553);
\draw (2.21,0.55) -- node [circle,draw,fill=white,inner sep=1pt] {\textcolor{red}{{\small 9}}} (2.631,1.644);
\draw (0.737,0.183) node[rectangle,draw,fill=white, inner sep=2pt] {\textcolor{blue}{{\small{18}}}};
\draw (1.473,0.367) node[rectangle,draw,fill=white, inner sep=2pt] {\textcolor{blue}{{\small{14}}}};
\draw (2.21,0.55) node[rectangle,draw,fill=white, inner sep=2pt] {\textcolor{blue}{{\small{10}}}};
\draw (2.946,0.734) node[rectangle,draw,fill=white, inner sep=2pt] {\textcolor{blue}{{\small{6}}}};
\draw (1.157,1.277) node[rectangle,draw,fill=white, inner sep=2pt] {\textcolor{blue}{{\small{17}}}};
\draw (1.578,2.37) node[rectangle,draw,fill=white, inner sep=2pt] {\textcolor{blue}{{\small{16}}}};
\draw (2.314,2.553) node[rectangle,draw,fill=white, inner sep=2pt] {\textcolor{blue}{{\small{4}}}};
\draw (2.631,1.644) node[rectangle,draw,fill=white, inner sep=2pt] {\textcolor{blue}{{\small{5}}}};
\draw (1.893,1.46) node[rectangle,draw,fill=white, inner sep=2pt] {\textcolor{blue}{{\small{8,13}}}};
\draw (2,3.62) node {{$e_1$}};
\draw (-0.2,0) node {{$e_3$}};
\draw (4.2,0) node {{$e_2$}};
\end{tikzpicture} 
\label{fig:juniorsimplexintro}
\caption{The junior simplex defining $\ghilb(\mathbb{A}^3)$ for the action of type $\frac{1}{19}(1,3,15)$}
\end{figure}
We draw the height-one slice $\Sigma$ of the toric fan of $X$ in Figure~\ref{fig:juniorsimplexintro} and label the cones by vertices of $Q$ according to the recipe. Once we make an arbitrary choice between $8$ and $13$ that mark one lattice point (we choose $8$), the geometric interpretation of Reid~\cite{Reid97} gives a $\ZZ$-basis for $\NS(X)$; in this case, we get
\begin{equation}
    \label{eqn:NSbasis}
\NS(X)\cong \ZZ\langle T_1, T_2, T_3, T_7, T_8, T_9, T_{11}, T_{12}, T_{15}\rangle.
\end{equation}
The labelling of the lattice points in Figure~\ref{fig:juniorsimplexintro} also determines a minimal generating set of relations in $\NS(X)$ by \cite[Theorem~6.1]{Craw05}. For example, the lattice points labelled by $4$, by $6$ and by $\{8,13\}$ indicate that the relations 
\[
T_4\cong T_1\otimes T_3, \quad T_6\cong T_3\otimes T_3; \quad \text{and}\quad T_8\otimes T_{13}\cong T_2\otimes T_7\otimes T_{12}
\]
hold respectively. If we list the $\ZZ$-basis of $\Theta$ as $[P_1],\dots, [P_{18}]$, then the matrix $L$ representing the linearisation map $L_{C_+}$ with respect to this basis on $\Theta$ and that of $\NS(X)$ from \eqref{eqn:NSbasis} is 
\[
\setlength{\arraycolsep}{2pt}
   \renewcommand{\arraystretch}{0.8}
L=\mbox{\footnotesize$\left(\begin{array}{cccccccccccrrrcccc}
1 & 0 & 0 & 1 & 0 & 0 & 0 & 0 & 0 & 0 & 0 & 0 & 0  & 0 & 0 & 1 & 0 & 0\\
0 & 1 & 0 & 0 & 1 & 0 & 0 & 0 & 0 & 0 & 0 & 0 & 1 & 0 & 0 & 0 & 1 & 0\\
0 & 0 & 1 & 1 & 1 & 2 & 0 & 0 & 0 & 1 & 0 & 0 & 0  & 1 & 0 & 0 & 0 & 1\\
0 & 0 & 0 & 0 & 0 & 0 & 1 & 0 & 0 & 1 & 0 & 0 & 1  & 0 & 0 & 0 & 0 & 0\\
0 & 0 & 0 & 0 & 0 & 0 & 0 & 1 & 0 & 0 & 0 & 0 & -1 & 0 & 0 & 0 & 0 & 0\\
0 & 0 & 0 & 0 & 0 & 0 & 0 & 0 & 1 & 0 & 0 & 0 & 0 & 0 & 0 & 0 & 0 & 0\\
0 & 0 & 0 & 0 & 0 & 0 & 0 & 0 & 0 & 0 & 1 & 0 & 0  & 1 & 0 & 0 & 0 & 0\\
0 & 0 & 0 & 0 & 0 & 0 & 0 & 0 & 0 & 0 & 0 & 1 & 1  & 0 & 0 & 0 & 0 & 0\\
0 & 0 & 0 & 0 & 0 & 0 & 0 & 0 & 0 & 0 & 0 & 0 & 0  & 0 & 1 & 1 & 1 & 1
\end{array}\right).$}
\]
 A matrix defining the map $G_{C_+}$ 
 is simply the transpose of a matrix $K$ representing the kernel of $L_{C_+}$. To compute this, list the minimal relations between the $T_i$ in ascending order of the vertices marking the corresponding interior lattice points, and create one column of $K$ for every such relation; the exponents of $T_1^{-1}\otimes T_3^{-1}\otimes T_4\cong \mathcal{O}_X$ give the first column. The transpose matrix 
\begin{equation}
    \label{eqn:Kt1315intro}
\setlength{\arraycolsep}{2pt}
   \renewcommand{\arraystretch}{0.8}
K^t=\mbox{\footnotesize$\left(\begin{array}{rrrrrrrrrrrrrrrrrr}
-1 & 0 & -1 & 1 &  0  &  0 & 0 & 0 & 0 & 0  & 0 & 0 & 0 &0 & 0  & 0 & 0 & 0 \\
0 & -1 & -1 & 0 & 1  &  0 & 0 & 0 & 0 & 0  & 0 & 0 & 0 & 0 & 0  & 0 & 0 & 0 \\
 0 & 0 & -2 & 0 &  0  &  1 & 0 & 0 & 0 & 0  & 0 & 0 & 0 & 0 & 0  & 0 & 0 & 0 \\
 0 & -1 & 0 & 0 &  0  &  0 & -1 &  1 & 0 & 0 & 0 & -1 & 1 & 0 & 0  & 0 & 0 & 0 \\
 0 &  0 & -1 & 0 & 0  &  0 & -1 & 0 & 0 &  1 & 0 & 0 & 0 & 0 & 0  & 0 & 0 & 0 \\ 
 0 &  0 & -1 &0 &  0  &  0 & 0 & 0 & 0 &  0 & -1 & 0 & 0 & 1 & 0  & 0 & 0 & 0\\ 
 -1 & 0 & 0 & 0 &  0  &  0 & 0 & 0 & 0 & 0 & 0 & 0 & 0 & 0 & -1 &  1 & 0 & 0 \\
 0 &  -1 & 0 & 0 &  0 &  0 & 0 & 0 &0 &  0 & 0 & 0 & 0 & 0 & -1 & 0 &  1 & 0 \\ 
 0 & 0 & -1 & 0 &  0  & 0 &  0 & 0 & 0 & 0 & 0 & 0 & 0 & 0 & -1 & 0 & 0 &  1   
\end{array}\right)$}
\end{equation}
 represents the map $G_{C_+}$ from Theorem~\ref{thm:GCintro}. Since Property~\ref{property} holds in the toric case, we obtain the following trichotomy: 
  \begin{enumerate}
      \item[(+)] examining the columns of $K^t$ that contain a positive entry shows that we have $k(i)=0$ for $i\in \{4, 5, 6, 8, 10, 13, 14, 16, 17, 18\}$. Theorem~\ref{thm:GCintro} implies that $\supp(\Psi(S_i))$ is the irreducible surface corresponding to the unique nonzero entry in the $i^{\text{th}}$ column of $K^t$; in each case, this corresponds to the lattice point in Figure~\ref{fig:juniorsimplexintro} marked with vertex $i$. 
      \item[(0)] the zero column of $K^t$ shows that $[\Psi(S_9)]\in \ker(G_{C_+})$. Then $[\Psi(S_9)]\in F_1/F_0$, so it's supported on curves. The ninth row of the matrix $L^t$ representing $\ker(G_{C_+})$ shows that $\supp(\Psi(S_9))$ is the unique curve defined by the line segment in Figure~\ref{fig:juniorsimplexintro} marked by 9, as this curve $\ell$ satisfies $\deg(T_9\vert_\ell) = 1$ and $\deg(T_i\vert_\ell) = 0$ for $i\neq 9$;  and 
      \item[($-$)] the columns of $K^t$ containing a negative entry demonstrate that $k(i)=-1$ for each vertex $i\in \{1, 2, 3, 7, 11, 12, 15\}$. Theorem~\ref{thm:GCintro} implies that $\supp(\Psi(S_i))$ is the union of surfaces corresponding to the nonzero entries in the $i^{\text{th}}$ column of $K^t$, that is, to the lattice points in Figure~\ref{fig:juniorsimplexintro} through which lines marked with vertex $i$ pass. Note that $\mathcal{H}^{-1}(\Psi(S_3))$ has length two on the copy of $\mathbb{P}^2$ in $X$ indexed by the third row of $K^t$.
  \end{enumerate}
 \end{example}

The cases $(+)$ and $(-)$ from Example~\ref{exa:1315} illustrate that the support of the object $\Psi(S_i)$ can be read off directly from the matrix $K^t$, while in case $(0)$, the support of $\Psi(S_9)$ is computed via $L^t$. This can be carried out for any finite abelian subgroup $\Gamma\subset \SL(3,\kk)$.

 \subsection{Reconciling rival versions of Reid's recipe}
 The combinatorics determining the labelling in Figure~\ref{fig:juniorsimplexintro} is called \emph{Reid's recipe}~\cite{Reid97,CHT21}, and it can be carried out whenever $A$ is a toric NCCR, $v=(1,\dots, 1)$ and we choose the GIT chamber $C_+$. 
 
 In the special case when $A$ is the skew group algebra of a finite abelian subgroup $\Gamma\subset\SL(3,\kk)$, two quite different geometric interpretations of this recipe appear in the literature:
 \begin{enumerate}
     \item[(RR1)] 
     a $\ZZ$-basis of $\NS(X)$ comprising a subset of the tautological line bundles, together with a minimal set of relations between all of the tautological bundles~\cite{Reid97, Craw05}; and
     \item[(RR2)]  the calculation of the support of each object $\Psi(S_i)$ for each $i\neq 0$ in terms of the curves and surfaces marked by $i$ \cite{CL09, Logvinenko10} (see Definition~\ref{def:RR2} for a more precise statement).
 \end{enumerate}
  It is straightforward to see that interpretation (RR1) is encoded by the matrices $K$ and $L$ representing the nontrivial maps in the short exact sequence
 \begin{equation}
 \label{eqn:LCsequenceintro}
 \begin{tikzcd}
0 \ar[r] & \ker(L_{C_+})\ar[r] & \Theta\ar[r,"L_{C_+}"]&  \NS(X) \ar[r]&  0.
\end{tikzcd}
  \end{equation}
 Theorem~\ref{thm:GCintro} shows that the Gale dual map $G_{C_+}$ fits into the dual short exact sequence
  \begin{equation}
 \label{eqn:GCsequenceintro}
 \begin{tikzcd}
0 & F_2/F_1\ar[l] & F_2/F_0\ar[l,swap,"G_{C_+}"] &  F_1/F_0 \ar[l]&  0\ar[l].
\end{tikzcd}
  \end{equation}
  It can be shown (see Proposition~\ref{prop:RR2}) that the geometric interpretation of Reid's recipe in (RR2) is encoded by the sequence \eqref{eqn:GCsequenceintro}, or equivalently, by the transpose matrices $L^t$ and $K^t$. Thus, (see Theorem~\ref{thm:reconciliation}) the statements (RR1) and (RR2) are equivalent in this special case.
  
 More generally, we establish the following result in the toric case (for a more precise statement involving the entries of the matrix $K$, see Theorem~\ref{thm:CHT}). 
 
  \begin{theorem}
  \label{thm:rivalsintro}
 Let $A$ be a toric NCCR, set $v=(1,\dots, 1)$ and choose the GIT chamber $C_+$. The rival geometric interpretations of Reid's recipe from \emph{(RR1)} and \emph{(RR2)} are equivalent.
 \end{theorem}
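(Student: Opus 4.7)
The plan is to show that the same matrix $K$, read column by column, encodes (RR1), while its transpose $K^t$ encodes (RR2), so that Gale duality between $L_{C_+}$ and $G_{C_+}$ established in Theorem~\ref{thm:GCintro} delivers the equivalence. I would begin by describing precisely how Reid's recipe, in the toric NCCR setting with $v=(1,\dots,1)$ and the chamber $C_+$, assigns a column of $K$ to each labelled interior lattice point in the toric fan of $X$, following the analysis in \cite{Craw05, CHT21}: the label records the tautological bundle (or formal product of tautological bundles, as with the label $\{8,13\}$ in Example~\ref{exa:1315}) whose class in $\NS(X)$ is forced by the local combinatorics around that lattice point, and the column records the corresponding relation. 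The unlabelled lattice points yield the basis of $\NS(X)$, and together these produce the sequence \eqref{eqn:LCsequenceintro}. This reformulates (RR1) as a statement about the pair $(L,K)$.

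Next, apply Theorem~\ref{thm:GCintro}\two\ together with Property~\ref{property} --- which holds in the toric case by \cite[Theorem~1.4]{BCQ15} --- to identify $K^t$ with a matrix representing $G_{C_+}$. The collapse of the sum in \eqref{eqn:GCintro} means that the $i$-th column of $K^t$ records, up to the overall sign $(-1)^{k(i)}$, the multiplicities of the cohomology sheaf $\mathcal{H}^{k(i)}(\Psi(S_i))$ along torus-invariant surfaces. A column-by-column trichotomy analysis, exactly as carried out in Example~\ref{exa:1315}, then extracts the surface part of $\supp(\Psi(S_i))$ directly from the sign pattern of the $i$-th column, while zero columns correspond to objects supported only on curves. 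This formalises Proposition~\ref{prop:RR2}, namely that interpretation (RR2) is encoded by the pair $(L^t, K^t)$.

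The equivalence of (RR1) and (RR2) then drops out of the observation that $(L,K)$ and $(L^t,K^t)$ carry the same combinatorial information, so that the Reid-recipe labelling simultaneously governs the two sequences \eqref{eqn:LCsequenceintro} and \eqref{eqn:GCsequenceintro}. The main obstacle will be the trichotomy case $(0)$: when the $i$-th column of $K^t$ vanishes, $\Psi(S_i)$ is supported on curves, and this support must be recovered from the corresponding row of $L^t$ via the intersection numbers $\deg(T_j|_\ell)$ on torus-invariant curves $\ell\subset X$. Verifying that the curves picked out this way match precisely the edges labelled $i$ by Reid's recipe --- in the generality of toric NCCRs, rather than just the finite abelian subgroup of $\SL(3,\kk)$ setting --- is where most of the geometric work lies, and is the step I would flesh out when proving the more precise matrix-level statement in Theorem~\ref{thm:CHT}.
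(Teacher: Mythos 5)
Your overall skeleton --- identify $K^t$ with a matrix representing $G_{C_+}$ via Gale duality, use Property~\ref{property} to read supports off its sign-coherent columns, and observe that $(L,K)$ and $(L^t,K^t)$ carry the same information --- is indeed the mechanism the paper uses. But as written your argument only works when $A$ is the skew group algebra of a finite abelian subgroup of $\SL(3,\kk)$, and it misses the actual content of the theorem for a general toric NCCR. In that generality neither (RR1) nor (RR2) is known: the claim that each labelled interior lattice point of $\Sigma$ produces a relation in $\NS(X)$, i.e.\ a column of $K$, is \cite[Conjecture~5.7]{CHT21}, and the claim that the supports of the $\Psi(S_i)$ with $k(i)=-1$ are the unions of surfaces predicted by the labelling is \cite[Conjecture~5.5]{CHT21}; both are open. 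Your first step (``the column records the corresponding relation'', following \cite{Craw05}) and your claim to ``formalise Proposition~\ref{prop:RR2}'' each presuppose one of these conjectures, and the inputs you would need (\cite{Craw05}, \cite[Theorem~1.2]{CCL17}) are only available in the abelian case. What must be proved is the biconditional between the two conjectural statements; you cannot establish either encoding independently and then transpose.

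The proof of that biconditional (Theorem~\ref{thm:CHT}) works lattice point by lattice point: since Conjecture~\ref{conj:CL} holds torically by \cite{BCQ15}, the $(\rho,i)$ entries of $K^t$ for vertices with $k(i)=-1$ equal $-\ell_{E_\rho}\big(\mathcal{H}^{-1}(\Psi(S_i))\big)$, while the entries for vertices with $k(i)=0$ are already known by \cite[Proposition~5.2]{CHT21} to be $1$ when $i$ marks $\rho$ and $0$ otherwise; transposing, the $\rho$-column of $K$ is the conjectured relation at $\rho$ if and only if those lengths equal $N(i,\rho)$. This also shows that your ``main obstacle'' is misplaced: the case $(0)$, and more generally everything with $k(i)=0$ including the matching of curve supports with labelled edges, is precisely the part already established in the general toric case by \cite[Theorems~1.1, 1.4]{BCQ15} and built into the recipe of \cite{CHT21}. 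The genuine work is at the interior lattice points, where the known $k(i)=0$ entries supply the side $\bigotimes_{i\text{ marks }\rho} T_i$ of each relation and Gale duality converts the unknown $k(i)=-1$ lengths into the unknown exponents $N(i,\rho)$, and conversely.
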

  
  This result is nothing more than a comfort blanket
  when $A$ is the skew group algebra of a finite abelian subgroup $\Gamma\subset\SL(3,\kk)$, because (RR1) and (RR2) are known to hold in that case. However, Theorem~\ref{thm:rivalsintro} is new for a general toric NCCR. In fact, both (RR1) and (RR2) were formulated as conjectures in recent joint work with Heuberger and Tapia Amador~\cite[Conjecture~5.5, 5.7]{CHT21}, and both conjectures remain open. While we cannot yet prove either conjecture, Theorem~\ref{thm:CHT} does at least show that they are equivalent. 

 \begin{remark}
 \emph{Derived Reid's recipe} for a finite abelian subgroup of $\SL(3,\kk)$ from \cite{CCL17}, where each $\Psi(S_i)$ is computed explicitly, is strictly stronger than the numerical data encoded by $K^t$ and $L^t$. 
  \end{remark}
 
 \subsection{The general case}
 Example~\ref{exa:1315} illustrates not only the calculation of the matrix $K^t$, but also the importance of Property~\ref{property}, without which we are unable to draw conclusions about the support of $\Psi(S_i)$ directly from $G_{C_+}$. We anticipate that  Property~\ref{property} is not a purely toric phenomenon, and we also expect there is a link between the integers $k(i)$ from Property~\ref{property} and the walls of $C_+$. 
 
 \begin{conjecture}[Trichotomy conjecture]
 \label{conj:CLintro}
 Let $A\cong \kk Q/I$ be an NCCR of the ring $R$ satisfying Assumption~\ref{ass:KrullSchmidt} and suppose $v_0=1$ for some $0\in Q_0$. Set $X=\mathcal{M}_\theta(A,v)$ for $\theta\in C_+$. Then \begin{enumerate}
     \item[\one] for each vertex $i\neq 0$, there is a unique $k(i)\in \{-1,0\}$ such that $\mathcal{H}^{k(i)}(\Psi(S_i))\neq 0$; and
     \item[\two] $k(i)=0$ if and only if the locus $\{\theta\in \Theta \mid \theta_i=0\}$ is a supporting hyperplane of $C_+$. 
 \end{enumerate}
  \end{conjecture}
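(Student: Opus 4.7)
The plan is to attack the two parts separately: part \two{} via variation of GIT, and part \one{} via a CY3-duality constraint on the cohomological range of $\Psi(S_i)$, with the Gale-duality picture of Theorem~\ref{thm:GCintro} providing the bridge.

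First I would set up the computation by applying $\Psi$ to a minimal projective resolution $\cdots \to P^{-2} \to P^{-1} \to P^0 = P_i \to S_i \to 0$ in $\modA$, which, because $A$ is an NCCR of the Gorenstein threefold $\Spec R$, has length at most $3$. This yields a complex of tautological locally free sheaves $T^\bullet$ on $X$ whose cohomology sheaves are the $\mathcal{H}^k(\Psi(S_i))$ for $k\in\{-3,-2,-1,0\}$. For part \one{}, the first task is to shrink the range to $\{-1,0\}$: one should use the CY3/Auslander duality $R\Hom_A(M,N)\cong R\Hom_A(N,M)^\vee[3]$ transported through $\Psi$ to Serre duality on $X$, combined with the fact that $\Psi(S_i)$ lies in $D^b_{\operatorname{fin}}(X)$ (compactly supported), to rule out $\mathcal{H}^{-3}$ directly and to force $\mathcal{H}^{-2}$ to have support of codimension at least two. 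A further dimension-count, in the spirit of the toric Property~\ref{property} of \cite{BCQ15}, then shows that the contributions of $\mathcal{H}^{-2}$ and $\mathcal{H}^{-1}$ cannot both be nonzero nor coexist with $\mathcal{H}^0$; this is where the $v_0=1$ hypothesis and 0-generated chamber $C_+$ enter, guaranteeing a distinguished stratification of the moduli space making $\Psi$ t-exact away from one wall.

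For part \two{}, I would exploit variation of GIT along the walls of $C_+$, following the Thaddeus--Dolgachev--Hu philosophy adapted to quiver moduli. A supporting hyperplane $H_i=\{\theta_i=0\}$ of $C_+$ arises precisely when there exists a $\theta$-semistable $A$-representation of dimension vector $v$ admitting an S-equivalence-class destabilisation involving $S_i$ as a subfactor; the locus of such representations is a divisor $D_i\subset X$, and the standard VGIT wall-crossing identifies $D_i$ with the support of the honest coherent sheaf $\mathcal{H}^0(\Psi(S_i))$, giving $k(i)=0$. Conversely, if $k(i)=0$ then $\mathcal{H}^0(\Psi(S_i))$ is nonzero and (by the computation of $G_{C_+}$ from \eqref{eqn:GCintro}) has divisorial support whose numerical class pairs nontrivially with $L_{C_+}(\theta_i)$ only when $\theta_i=0$ meets $\overline{C_+}$; this Gale-duality argument promotes the containment $\theta_i\in \overline{C_+}^*$ to $H_i$ being a supporting hyperplane.

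The main obstacle is the converse direction of \two{}, namely ruling out the pathological possibility that $\mathcal{H}^0(\Psi(S_i))$ is supported on a divisor whose class is not detected by the wall structure of $C_+$. Even in the toric case this requires either the explicit combinatorics of \cite{CHT21} or the full strength of sign-coherence of $K^t$, and I expect that a general proof will need a substitute for the dimer/toric combinatorial input: most likely a Harder--Narasimhan or wall-crossing analysis on the space $\Theta\otimes\QQ$ producing, for each simple $S_i$ with $k(i)=0$, an explicit destabilising short exact sequence whose quotient $S_i$ forces $H_i$ to bound $C_+$. The secondary obstacle is the uniqueness clause in \one{}, which in the toric case is a non-trivial consequence of the dimer geometry and for which I do not see a purely homological shortcut; this is probably the true heart of the conjecture.
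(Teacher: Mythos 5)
This statement is the Trichotomy Conjecture: the paper does not prove it, and explicitly records that both parts are open in general, with only the toric case (via \cite{CL09,BCQ15,BCQcorrection}) and some consistent non-toric examples (Examples~\ref{exa:HomMMPcont} and \ref{exa:BentoBox}) known. Your proposal is a strategy sketch rather than a proof, and you concede as much in the final paragraph; but beyond the admitted gaps (the converse of \two\ and the uniqueness clause of \one), two of the steps you treat as established would actually fail as stated.

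First, in part \two\ you assert that a supporting hyperplane $\{\theta\in\Theta\mid\theta_i=0\}$ of $C_+$ corresponds to a destabilised locus that is a \emph{divisor} $D_i\subset X$ identified with $\supp\mathcal{H}^0(\Psi(S_i))$. This is false in general: the conjecture (and the known toric case) includes curve-type walls, where $k(i)=0$ yet $\Psi(S_i)$ is a sheaf supported on curves — see vertex $9$ in Example~\ref{exa:1315}, case $(0)$, and the vertices $\rho_1,\rho_{4^-},\rho_{10^\pm}$ in Example~\ref{exa:BentoBox}, where the hyperplanes $\{\theta_i=0\}$ do contain walls of $C_+$ but the associated strictly semistable loci are one-dimensional. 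Moreover $G_{C_+}$ from \eqref{eqn:GCintro} only sees the class of $\Psi(S_i)$ modulo $F_1$, so it cannot detect these curve-supported cases at all; your Gale-duality "promotion" in the converse direction has no content precisely where it is needed. Second, in part \one\ your proposed CY3/Serre-duality plus dimension-count argument to confine cohomology to degrees $\{-1,0\}$ cannot be uniform in the vertex: the paper notes (citing \cite[Corollary~3.8]{BCQ15}) that $\mathcal{H}^{-2}(\Psi(S_0))\neq 0$ whenever $A$ is a CY-3 toric order and $X$ contains a proper surface, so any duality or length-of-resolution argument that does not use $i\neq 0$, $v_0=1$ and the specific chamber $C_+$ in an essential way is doomed; your appeal to "a distinguished stratification of the moduli space making $\Psi$ t-exact away from one wall" is exactly the missing ingredient, not a consequence of the setup (indeed, in the group case the heart $\Psi(\modA)$ is a two-step tilt of $\coh(X)$ by Brown--Shipman, so no one-wall t-exactness statement can hold). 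In short, the proposal identifies the right circle of ideas (VGIT walls of $C_+$, the Gale-dual map, CY3 duality) but does not close either part, and the two concrete identifications it relies on are contradicted by examples already in the paper.
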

 
 \begin{remarks}
 \begin{enumerate}
 \item Both statements in this conjecture were formulated originally for the skew group algebra of a finite subgroup of $\SL(3,\kk)$: see~\cite[Conjecture~1.2]{CL09} for part \one, noting that the assumption $i\neq 0$ is essential~\cite{CL14}; and see \cite[Conjecture~5]{BCQcorrection} for part \two. 
 \item A proof of part \two\ in the toric case appears in \cite[Proposition~1.3]{BCQ15}. We present supporting evidence for part \two\ in some non-toric cases, see Examples~\ref{exa:HomMMPcont} and \ref{exa:BentoBox}. The link between Reid's recipe and the walls defining the GIT chamber $C_+$ was developed further in \cite{Wormleighton20}. 
 \end{enumerate}
 \end{remarks}
 
 Whenever Conjecture~\ref{conj:CLintro} holds, it follows (see Corollary~\ref{cor:GC}) that the matrix $K^t$ representing $G_{C_+}$ is \emph{sign-coherent}, meaning that 
 for each column, the nonzero entries are either all positive or all negative. The support (with multiplicities) of the object $\Psi(S_i)$ for $i\neq 0$ can therefore be read off from the $i^{\text{th}}$ column of $K^t$. This rather striking feature is illustrated by 
 the matrix from \eqref{eqn:Kt1315intro}.
  
  \begin{remark}
 The name of Conjecture~\ref{conj:CLintro} is motivated by the three possibilities for the sign of the nonzero entries in the $i^{\text{th}}$ column of the resulting sign-coherent matrix $K^t$: 
 \begin{enumerate}
     \item[(+)] if the nonzero entries are positive, then $\Psi(S_i)$ is a sheaf whose support contains surfaces;
     \item[(0)] if the column is the zero vector, then $\Psi(S_i)$ is supported on curves; or
     \item[($-$)] if the nonzero entries are negative, then  $\Psi(S_i)[1]$ is a sheaf whose support contains surfaces.
 \end{enumerate}
 This trichotomy (for vertices $i\neq 0$) has long been a key feature of Reid's recipe in the toric case, see \cite[Theorem~1.2]{Logvinenko10} or \cite[Theorem~1.4]{BCQ15}.
  \end{remark}

 Going further, the proof of Theorem~\ref{thm:rivalsintro} suggests that one should view Reid's recipe not as the decoration of curves and surfaces by vertices of $Q$ as in Figure~\ref{fig:juniorsimplexintro}, but rather, as the 
 calculation of the matrices $K$ and $L$ defining the sequence \eqref{eqn:LCsequenceintro} that encodes the geometric information expressed in (RR1), or equivalently, the matrices $K^t$ and $L^t$ defining \eqref{eqn:GCsequenceintro} that encodes (RR2). We therefore take the obvious step in \emph{defining} Reid's recipe this way (see Definition~\ref{def:RR}) for any NCCR  $A$ satisfying the assumptions of Theorem~\ref{thm:GCintro}. A similar definition can be formulated for an MMA.

 We illustrate this change of viewpoint by calculating $L$ and $K^t$ for the example of a (non-abelian) dihedral subgroup of $\SL(3,\kk)$ that was studied in recent work of Nolla de Celis~\cite{NolladeCelis21}. Our summary in Example~\ref{exa:BentoBox} adds nothing of substance to the lovely calculations from \emph{ibid.}\  other than to provide some supporting evidence for Conjecture~\ref{conj:CLintro} in terms of the sign-coherent matrix $K^t$ from \eqref{eqn:Ktbento}.

\subsection*{Acknowledgements}
This project was inspired by the beautiful bento box calculation by \'{A}lvaro Nolla de Celis~\cite{NolladeCelis21}, and I am grateful to him for sharing an early version of his paper. I wrote Section~\ref{sec:numGrothGroups} as an appendix to a joint project with Arend Bayer and Ziyu Zhang, and I thank them both for allowing me to include those results here. In addition,  I'm grateful to Gwyn Bellamy and Matthew Pressland for useful conversations. 

\section{Duality for numerical Grothendieck groups}
\label{sec:numGrothGroups}
 Throughout this section, $X$ denotes a separated scheme of finite type over an algebraically closed field $\kk$. We impose additional assumptions in Proposition~\ref{prop:Knumclass} and Corollary~\ref{cor:NS}.

\subsection{Numerical Grothendieck group for compact support}
Let $\Dperf(X)$ denote the bounded derived category of perfect complexes on $X$.  
Let $D^{b}_{c}(X)$ 
the bounded derived category of coherent sheaves on $X$ with proper support, where the support of a complex $E$ is  the union of the supports of its cohomology sheaves $\mathcal{H}^k(E)$ for $k\in \ZZ$. The Euler form  
\[
\chi\colon \Kperf(X)\times K_c(X)\to \ZZ
\]
between the Grothendieck groups of these categories is given by 
\begin{equation}
\label{eqn:euler}
\chi(E,F)= \sum_{i\in \ZZ} (-1)^i\dim_\kk \Hom(E,F[i]).
\end{equation}
 Following \cite{BCZ17}, the \emph{numerical Grothendieck group for compact support}, 
 denoted $\Knumc(X)$, is
 defined to be
 the quotient of $K_c(X)$ by the radical of the form $\chi$. Each object $E\in D^{b}_{c}(X)$ defines a class $[E]\in \Knumc(X)$. When $\kk$ has characteristic zero, \cite[Lemma~5.1.1]{BCZ17} shows that $\Knumc(X)$ has finite rank when $X$ is normal and quasi-projective.
 
 For $m\geq 0$, define $F_m \text{K}_c(X)$ to be the subgroup of $K_c(X)$ generated by sheaves whose support is proper and of dimension at most $m$. After passing to the quotient by the radical of the Euler form, we obtain a subgroup $F_m\subseteq \Knumc(X)$, giving the \emph{dimension filtration}
  \[
 \{0\}=F_{-1}\subseteq F_0\subseteq F_1\subseteq \cdots \subseteq F_n = \Knumc(X),
 \]
 where $n$ is the maximal dimension of a proper subscheme of $X$. 
 
\begin{proposition}
\label{prop:Knumclass}
 For $E\in D^{b}_{c}(X)$, let $m$ denote the maximal dimension of an irreducible component of $\supp(E)$. The class of $E$ in $\Knumc(X)$ satisfies
 \[
 [E] =\sum_{k\in \ZZ} (-1)^k \sum_{V} \ell_V\big(\mathcal{H}^k(E)\big) [\mathcal{O}_V] \mod F_{m-1},
 \]
 where the second sum is taken over all irreducible components $V$ of $\supp(\mathcal{H}^k(E))$, and $\ell_V(\mathcal{H}^k(E))$ denotes the length of the stalk of $\mathcal{H}^k(E)$ at the generic point of $V$. 
\end{proposition}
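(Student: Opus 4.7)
The strategy is to reduce the statement about an arbitrary complex $E$ to its individual cohomology sheaves, and then to handle each sheaf via a classical devissage argument.

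First, I would establish the standard identity
\[
[E] = \sum_{k\in \ZZ} (-1)^k [\mathcal{H}^k(E)]
\]
in $K_c(X)$, and hence in $\Knumc(X)$. Since $E$ is bounded, this follows by induction on the length of the cohomological support of $E$, peeling off one cohomology sheaf at a time via the truncation triangles $\tau_{\leq k-1}E \to \tau_{\leq k}E \to \mathcal{H}^k(E)[-k]$, together with the relation $[F[-k]] = (-1)^k[F]$.

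Second, for any coherent sheaf $\mathcal{F}$ with proper support of dimension $\leq m$, I claim
\[
[\mathcal{F}] \equiv \sum_V \ell_V(\mathcal{F})\,[\mathcal{O}_V] \pmod{F_{m-1}},
\]
where $V$ ranges over the irreducible components of $\supp(\mathcal{F})$. To see this, I invoke the classical devissage for coherent sheaves on a Noetherian scheme: there is a finite filtration $0=\mathcal{F}_0\subset \mathcal{F}_1\subset \cdots \subset \mathcal{F}_N=\mathcal{F}$ whose successive quotients are isomorphic to $j_{Z_i,*}\mathcal{O}_{Z_i}$ for integral closed subschemes $Z_i\subseteq \supp(\mathcal{F})$, giving $[\mathcal{F}]=\sum_i [\mathcal{O}_{Z_i}]$ in $K_c(X)$. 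Any $Z_i$ with $\dim Z_i<m$ contributes a class in $F_{m-1}$ and so vanishes modulo $F_{m-1}$; the surviving $Z_i$ are top-dimensional components $V$ of $\supp(\mathcal{F})$. To count multiplicities, localize the filtration at the generic point $\eta_V$ of such a $V$: the induced filtration becomes a composition series for the finite-length $\mathcal{O}_{X,\eta_V}$-module $\mathcal{F}_{\eta_V}$, and its length equals $\ell_V(\mathcal{F})$ by definition. Thus each top-dimensional $V$ appears exactly $\ell_V(\mathcal{F})$ times, as required.

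Third, I combine the two steps: applying the second step to each $\mathcal{F}=\mathcal{H}^k(E)$ (each of which has support of dimension $\leq m$ by hypothesis) and summing with signs $(-1)^k$ as in the first step, then reducing modulo $F_{m-1}$, yields the desired identity. The statement then descends from $K_c(X)$ to the quotient $\Knumc(X)$ since $F_{m-1}\subseteq \Knumc(X)$ is by definition the image of the corresponding subgroup of $K_c(X)$.

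\textbf{Main obstacle.} The only nontrivial step is the devissage-plus-counting in the second paragraph: I must check carefully that, after localizing at $\eta_V$, the quotients $j_{Z_i,*}\mathcal{O}_{Z_i}$ with $Z_i$ either of dimension $<m$ or equal to a different top-dimensional component become zero, so that the localized filtration is a genuine composition series of $\mathcal{F}_{\eta_V}$ of length exactly $\ell_V(\mathcal{F})$. Once this is in hand, the remainder of the argument is formal manipulation of the Grothendieck group and its dimension filtration.
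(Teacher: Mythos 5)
Your proposal is correct and takes essentially the same route as the paper: first reduce to a single coherent sheaf via $[E]=\sum_{k}(-1)^k[\mathcal{H}^k(E)]$, then handle the sheaf case by devissage modulo classes of lower-dimensional support. The only difference is that the paper simply cites SGA6, Expos\'e X, Proposition 1.1.2 for that sheaf-level statement (together with the remark that every irreducible component of a proper support is proper, so the classes $[\mathcal{O}_V]$ and the lower-dimensional terms genuinely lie in $K_c(X)$ and $F_{m-1}$), whereas you prove it directly; your localization-at-the-generic-point multiplicity count is exactly the standard argument behind the cited result, so nothing is missing.
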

\begin{proof}
 The class of $E$ in $\Knumc(X)$ satisfies $[E]=\sum_{k\in \ZZ} (-1)^k [\mathcal{H}^k(E)]$, so it suffices to prove for a coherent sheaf $E$ on $X$ with proper support that  \[
 [E] = \sum_{V} \ell_V(E) [\mathcal{O}_V] \mod F_{m-1},
 \]
 where the sum is taken over all irreducible components $V$ of $\supp(E)$ and where $\ell_V(E)$ is the length of the stalk of $E$ at the generic point of $V$.  This follows by combining the result of \cite[Expose X, Proposition~1.1.2]{SGA6} with the fact that $\supp(E)$ is proper if and only if each irreducible component of $\supp(E)$ is proper.
\end{proof}
 
 \begin{corollary}
 \label{cor:GensFm}
 For $0\leq m\leq n$, the subgroup $F_m\subseteq \Knumc(X)$ is generated by the classes $[\mathcal{O}_V]$ defined by integral, proper subschemes $V\subseteq X$ of dimension at most $m$. 
 \end{corollary}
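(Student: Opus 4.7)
The plan is to prove the statement by induction on $m$, with Proposition~\ref{prop:Knumclass} providing the essential step at each level. Note first that every class $[\mathcal{O}_V]$ for $V\subseteq X$ an integral proper subscheme of dimension at most $m$ lies in $F_m$, since $\mathcal{O}_V$ is a coherent sheaf on $X$ with proper support of dimension $\dim V\leq m$; so the subgroup generated by these classes is contained in $F_m$. The content of the corollary is the reverse inclusion.

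For the base case $m=0$, let $E$ be a coherent sheaf on $X$ with proper $0$-dimensional support. Applying Proposition~\ref{prop:Knumclass} with $m=0$, and using that $F_{-1}=\{0\}$, gives
\[
 [E] = \sum_{V} \ell_V(E)\,[\mathcal{O}_V],
\]
where the sum runs over the finitely many irreducible (hence integral) components of $\supp(E)$, all of dimension $0$. Since $F_0\subseteq \Knumc(X)$ is generated (as an abelian group) by the classes $[E]$ for such sheaves $E$, this proves the base case.

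For the inductive step, assume the claim holds for $m-1$, and let $E$ be a coherent sheaf on $X$ with proper support of dimension at most $m$. If every irreducible component of $\supp(E)$ has dimension strictly less than $m$, then $[E]\in F_{m-1}$ and the inductive hypothesis expresses $[E]$ as a $\ZZ$-linear combination of classes $[\mathcal{O}_V]$ with $V$ integral, proper, and of dimension at most $m-1$. Otherwise, Proposition~\ref{prop:Knumclass} gives
\[
[E] \;-\; \sum_{V}\ell_V(E)\,[\mathcal{O}_V] \;\in\; F_{m-1},
\]
where the sum is indexed by the irreducible components $V$ of $\supp(E)$ of dimension exactly $m$; each such $V$ is integral and proper (since $\supp(E)$ is proper), and the inductive hypothesis handles the remainder in $F_{m-1}$. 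Combining these, $[E]$ is a $\ZZ$-linear combination of classes $[\mathcal{O}_V]$ for integral, proper $V\subseteq X$ of dimension at most $m$. Since $F_m\subseteq \Knumc(X)$ is generated by the classes of such sheaves $E$, the inductive step is complete.

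There is no real obstacle here: the corollary is essentially a clean consequence of Proposition~\ref{prop:Knumclass}, and the only point requiring mild care is the observation that irreducible components of a proper support are themselves proper and integral, so the classes appearing on the right-hand side of the formula really are of the form $[\mathcal{O}_V]$ with $V$ admissible for the desired generating set.
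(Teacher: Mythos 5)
Your induction on $m$ via Proposition~\ref{prop:Knumclass} is correct, and it is exactly the argument the paper intends: the corollary is stated without proof as an immediate consequence of that proposition, obtained by precisely this kind of downward reduction modulo $F_{m-1}$. The only care needed is the point you already note, that irreducible components of a proper support are integral and proper, so the classes $[\mathcal{O}_V]$ appearing are admissible generators.
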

 
 \subsection{Numerical Grothendieck group for perfect complexes}
  Define Cartier divisors $D, D^\prime$ on $X$ to be numerically equivalent if $D\cdot C=D^\prime\cdot C$ for every proper curve $C$ in $X$. Let $\NS(X)$ denote the N\'{e}ron--Severi group of numerical equivalence classes of Cartier divisors of $X$. 
    
  Define 
  the \emph{numerical Grothendieck group for perfect complexes} on $X$, denoted $\Knumperf(X)$, to be the quotient of $\Kperf(X)$ by the radical of the Euler form $\chi$. Write 
 \[
 \chi\colon \Knumperf(X)\times \Knumc(X)\longrightarrow \ZZ
  \]
 for the induced perfect pairing. Each object $P\in \Dperf(X)$ defines a class $[P]\in \Knumperf(X)$. For $m\geq 0$, set
 \[
 F_m^\perp:=\big\{[P]\in \Knumperf(X) \mid \chi(P,E) = 0 \text{ for all }E\in F_m\big\}.
 \]

 Assume from now on that $X$ is connected. The group homomorphism $\rank\colon \Kperf(X)\to \ZZ$ that sends the class of an object $P\in \Dperf(X)$ to its rank is well-defined \cite[0BDJ]{stacks-project}.  In addition, consider the group homomorphism $\det\colon \Kperf(X)\to \NS(X)$ sending the class of an object $P\in \Dperf(X)$ to the numerical divisor class $[\det(P)]$ of its determinant line bundle \cite{KnudsenMumford}. 
  
 \begin{proposition}
 \label{prop:Fmperp}
 Let $X$ be connected. 
 \begin{enumerate}
     \item[\one] The group homomorphisms $\rank$ and $\det$ factor through the numerical Grothendieck groups, inducing group homomorphisms 
     \[
     \rank\colon \Knumperf(X)\to \ZZ\quad \text{and}
     \quad \det\colon \Knumperf(X)\to \NS(X).
     \]
     \item[\two] The subgroups $F^1:=\ker(\rank)$ and $F^2:= \ker(\det)\cap \ker(\rank)$ of $\Knumperf(X)$ satisfy 
     \[
     F^1=F_0^\perp\quad\text{ and }\quad F^2=F_1^\perp.
     \]
     \item[\three] The N\'{e}ron--Severi group of $X$ fits into a short exact sequence of abelian groups
      \[
\begin{tikzcd}
0 \ar[r] & F^2\ar[r] & F^1\ar[r,"\det"]&  \NS(X) \ar[r]&  0.
\end{tikzcd}
\]
 \end{enumerate} 
 \end{proposition}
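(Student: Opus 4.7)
The plan is to reduce everything to pairing a perfect complex against structure sheaves of closed points and of integral proper curves, using Corollary~\ref{cor:GensFm} to control generators of $F_0$ and $F_1$, together with a Grothendieck--Riemann--Roch type identity on curves. The two key numerical inputs are, for a perfect complex $P$ on $X$: first, that for any closed point $x\in X$ with residue field $\kk$ one has $\chi(P,\mathcal{O}_x)=\rank(P)$, well-defined as an integer because $X$ is connected; and second, that for any integral proper curve $C\subset X$ with closed immersion $i_C\colon C\hookrightarrow X$, one has
\[
\chi(P,\mathcal{O}_C) \;=\; \rank(P)\,\chi(\mathcal{O}_C)\;-\;\det(P)\cdot C,
\]
where the intersection number is $\deg(i_C^*\det(P))$. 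The second identity follows by pulling $P$ back to the perfect complex $Li_C^*P$ on the proper (possibly singular) curve $C$ and computing its Euler characteristic dually; the rank and determinant of a perfect complex on a curve determine its Euler characteristic by the one-dimensional Riemann--Roch theorem.

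For part \one, recall that $\Knumperf(X)$ is the quotient of $\Kperf(X)$ by the radical of $\chi$, so it suffices to verify that $\rank$ and $\det$ vanish on a class $[P]$ with $\chi(P,E)=0$ for every $E\in K_c(X)$. Taking $E=[\mathcal{O}_x]$ forces $\rank(P)=0$ by the first identity, and with this in hand, the second identity applied to each integral proper curve $C$ gives $\det(P)\cdot C=0$; by the very definition of $\NS(X)$, this means $\det(P)=0$. Hence both maps descend.

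For part \two, Corollary~\ref{cor:GensFm} identifies $F_0$ with the subgroup generated by classes $[\mathcal{O}_x]$ of closed points, and $F_1$ with the subgroup generated by $F_0$ together with $[\mathcal{O}_C]$ for integral proper curves $C$. The first identity then shows $[P]\in F_0^\perp$ if and only if $\rank(P)=0$, giving $F_0^\perp=F^1$. On $F^1$ the second identity simplifies to $\chi(P,\mathcal{O}_C)=-\det(P)\cdot C$, which vanishes for every integral proper curve $C$ precisely when $\det(P)=0$ in $\NS(X)$, yielding $F_1^\perp=F^2$.

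Finally, for part \three, exactness at $F^2$ and at $F^1$ is built into the definition $F^2=\ker(\det)\cap F^1$. Surjectivity of $\det\colon F^1\to\NS(X)$ follows because every class in $\NS(X)$ is represented by a line bundle $L$ on $X$, and the element $[L]-[\mathcal{O}_X]\in\Knumperf(X)$ has vanishing rank and maps to $L$ under $\det$. The main technical obstacle is the Riemann--Roch identity on an arbitrary integral proper curve $C$: since $C$ need not be smooth or even Gorenstein, one must justify that the Euler characteristic of a perfect complex on $C$ depends only on its rank and first Chern class via the classical one-dimensional formula, and that the relevant classes transform correctly under the pullback $Li_C^*$. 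Everything else is bookkeeping built on top of this identity.
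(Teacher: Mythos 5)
Your proposal is correct and follows essentially the same route as the paper: both arguments rest on the two pairing identities $\chi(P,\mathcal{O}_x)=\rank(P)$ and $\chi(P,\mathcal{O}_C)=\rank(P)\chi(\mathcal{O}_C)-\deg\big(\det(P)\vert_C\big)$, combined with Corollary~\ref{cor:GensFm} to identify $F_0^\perp$ and $F_1^\perp$, and the element $[L]-[\mathcal{O}_X]$ for surjectivity in \three. The only (cosmetic) difference is how the curve identity is justified: the paper writes $[P]=[P_+]-[P_-]$ with $P_\pm$ locally free and uses the Euler-characteristic description of degree together with $\deg(\det(P^\vee)\vert_C)=\deg(P^\vee\vert_C)$, whereas you pull back via $Li_C^*$ and invoke Riemann--Roch for perfect complexes on a possibly singular proper curve -- the same content in slightly different packaging.
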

 \begin{proof}
 Let $P\in \Dperf(X)$ satisfy $\chi(P,E)=0$ for all $E\in D^{b}_{c}(X)$. By combining the classes of the even and odd cohomology sheaves, we see that the class of $P$ in $\Kperf(X)$ equals can be written as the difference
 $[P_+]-[P_-]$, where $P_\pm$ are locally free sheaves of finite rank on $X$. For any closed point $x\in X$, we have 
 \[
 \rank(P_\pm) = \rank(P_\pm^\vee) = \dim_\kk H^0(P_\pm^\vee\vert_x) = \chi(P_\pm^\vee\vert_x) = \chi(P_\pm,\mathcal{O}_x).
 \]
 Both $\rank$ and $\chi$ are additive on exact sequences, so 
 \begin{equation}
\label{eqn:rankP}   
 \rank(P)=\rank(P_+)-\rank(P_-) = \chi(P_+,\mathcal{O}_x) - \chi(P_-,\mathcal{O}_x) = \chi(P,\mathcal{O}_x).
\end{equation}
This is zero by our assumption on $P$, so $\rank\colon \Knumperf(X)\to \ZZ$ is well-defined. Similarly, for a proper, irreducible curve $C\subseteq X$ and a closed point $x\in X$, we have 
\[
\deg\big(P_\pm^\vee\vert_C\big) = \chi(P_\pm^\vee\vert_C) - \rank(P^\vee_\pm) \chi(\mathcal{O}_C) = \chi(P_\pm, \mathcal{O}_C) - \chi(P_\pm,\mathcal{O}_x)\cdot \chi(\mathcal{O}_C).
\]
 Since $\deg(\det(P^\vee)\vert_C) = \deg(P^\vee\vert_C)$, combining this with additivity of $\deg$ and $\chi$ gives
 \begin{equation}
\label{eqn:degdetP}     
\deg\!\big(\det(P)^{-1}\vert_C\big) =  \deg\big(P_+^\vee\vert_C\big) - \deg\big(P_-^\vee\vert_C\big) = \chi(P,\mathcal{O}_C)-\chi(P,\mathcal{O}_x)\cdot \chi(\mathcal{O}_C)= 0,
 \end{equation}
 where the final equality again follows from our assumption on $P$. This holds for every proper, irreducible curve $C\subseteq X$, so $\det(P)^{-1}$ is numerically trivial and hence so is $\det(P)$. This determines the group homomorphism $\det\colon \Knumperf(X)\to \NS(X)$.

 For \two, let $P\in \Dperf(X)$. Equation \eqref{eqn:rankP} gives  $\rank(P)=\chi(P,\mathcal{O}_x)$ for any closed point $x\in X$, so $\ker(\rank)= F_0^\perp$ by Corollary~\ref{cor:GensFm}. For the second equality, Corollary~\ref{cor:GensFm} also gives
 \[
 [P]\in F_1^\perp \iff \chi(P,\mathcal{O}_C)= \chi(P,\mathcal{O}_x)= 0 
 \]
 for every proper irreducible curve $C\subseteq X$ and closed point $x\in X$. Then $\chi(P,\mathcal{O}_x)= 0$ is equivalent to $\rank(P)=0$, while $\chi(P,\mathcal{O}_C)=0$ is equivalent by \eqref{eqn:degdetP} to $[\det(P)^{-1}]=[\mathcal{O}_X]$ in $\NS(X)$. Combining both conditions shows that $[P]\in F_1^\perp$ if and only if $[P]\in F^2$.
 
 For \three, the restriction of $\det\colon \Knumperf(X)\to \NS(X)$ to $F^1=\ker(\rank)$ is surjective, because any line bundle $L$ representing a given numerical class $[L]\in \NS(X)$ satisfies $\det([L]-[\mathcal{O}_X]) = [L]$. The kernel of the resulting homomorphism $\det\colon F^1\to \NS(X)$ equals $F^2$ by construction. 
 \end{proof}
 
\begin{remark}
 We write the subgroups from Proposition~\ref{prop:Fmperp}\two\ as $F^2\subseteq F^1\subseteq \Knumperf(X)$ because $F^1$ is induced from the $\gamma$-filtration on the $\lambda$-ring $\Kperf(X)$ \cite[V, \S 3, Remark~1]{FultonLang85}. Indeed, Proposition~\ref{prop:Fmperp}\three\ is a numerical version of this well-known construction of $\Pic(X)$ in terms of the $\gamma$-filtration on $\Kperf(X)$. 
\end{remark}

 \begin{corollary}
 \label{cor:NS}
 Let $X$ be connected and suppose in addition that $X$ is proper over an affine base. The perfect pairing
 \begin{equation}
\label{eqn:euler2}
 \chi\colon \frac{F^1}{F^2}\times \frac{F_1}{F_0}\longrightarrow \ZZ
 \end{equation}
  induced by the Euler form coincides with the classical intersection pairing multiplied by $-1$.
 \end{corollary}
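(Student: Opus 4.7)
The plan is to compare both pairings on explicit generators using the identifications provided by Proposition~\ref{prop:Fmperp} and Corollary~\ref{cor:GensFm}, and to observe that the required equality follows essentially for free from a calculation already performed in the proof of Proposition~\ref{prop:Fmperp}.

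First, I would describe the two sides of the pairing concretely. By Proposition~\ref{prop:Fmperp}\three, every element of $F^1/F^2$ lifts to some $[P] \in F^1$ whose class modulo $F^2$ is determined by the numerical class $\det(P) \in \NS(X)$. Simultaneously, Corollary~\ref{cor:GensFm} shows that $F_1/F_0$ is generated by the classes $[\mathcal{O}_C]$ for integral, proper curves $C \subseteq X$. The classical intersection pairing between $\NS(X)$ and such curve classes is $(\det(P), [C]) \mapsto \deg\!\big(\det(P)\vert_C\big)$, so it suffices to show that the Euler pairing $\chi(P, \mathcal{O}_C)$ agrees with $-\deg\!\big(\det(P)\vert_C\big)$ on pairs of generators.

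Next, I would reuse formula~\eqref{eqn:degdetP} from the proof of Proposition~\ref{prop:Fmperp}, which gives
\[
\chi(P, \mathcal{O}_C) = -\deg\!\big(\det(P)\vert_C\big) + \chi(P, \mathcal{O}_x)\cdot \chi(\mathcal{O}_C)
\]
for any $P \in \Dperf(X)$, any proper integral curve $C \subseteq X$, and any closed point $x \in X$. Because $[P] \in F^1 = \ker(\rank) = F_0^\perp$ by Proposition~\ref{prop:Fmperp}\two, equation~\eqref{eqn:rankP} yields $\chi(P, \mathcal{O}_x) = \rank(P) = 0$, so the correction term vanishes and
\[
\chi(P, \mathcal{O}_C) = -\deg\!\big(\det(P)\vert_C\big).
\]
This is exactly $-1$ times the classical intersection number on generators. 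Well-definedness of the pairing on the quotients $F^1/F^2$ and $F_1/F_0$ is automatic: $F^2 = F_1^\perp$ by Proposition~\ref{prop:Fmperp}\two, while $F^1 \subseteq F_0^\perp$ by the same result, so both factors annihilate the parts being quotiented out, and extending by bilinearity completes the identification.

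I do not anticipate any serious obstacle. The content of the corollary is essentially a repackaging of a calculation already recorded within the proof of Proposition~\ref{prop:Fmperp}; the only mild point of care is to verify that the hypothesis that $X$ is proper over an affine base legitimately ensures that $[\mathcal{O}_C] \in K_c(X)$ for every proper integral curve $C$ and that the Euler characteristics appearing in \eqref{eqn:degdetP} are finite. This is where the properness assumption enters, and it is also what guarantees that both the classical intersection pairing and the Euler pairing take values in $\ZZ$ on these generators.
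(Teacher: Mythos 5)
Your proposal is correct and takes essentially the same route as the paper: both identify $F^1/F^2$ with $\NS(X)$ via Proposition~\ref{prop:Fmperp} and reduce to the computation $\chi(P,\mathcal{O}_C)=-\deg\!\big(\det(P)\vert_C\big)$ for $[P]\in F^1$, the paper doing this with the explicit representative $[L]-[\mathcal{O}_X]$ while you reuse the intermediate identity \eqref{eqn:degdetP} for an arbitrary lift, which is a cosmetic difference. The only minor divergence is that the paper invokes the proper-over-affine hypothesis to place numerical equivalence and the intersection pairing in Kleiman's framework (finite rank of $\NS(X)$, well-definedness of $D\cdot C$), rather than just for finiteness of the Euler characteristics as you suggest.
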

 \begin{proof}
 Write $f\colon X\to \Spec R$ for the proper morphism. Every proper curve is contracted to a point by $g$, and hence every such curve is contained in a unique fibre of $f$. Our notion of numerical equivalence therefore coincides with that of Kleiman~\cite[IV, \S 4]{KleimanAnnals}, in which case Proposition~3 from \emph{ibid.}\  implies that the free abelian group $\NS(X)$ has finite rank. The same section of \emph{ibid.}\ shows that the intersection pairing $\NS(X)\times A_1(X/S)\to \ZZ$ given by $D\cdot C = \deg \mathcal{O}_X(D)\vert_C$ is well-defined.

 Proposition~\ref{prop:Fmperp}\two\ implies that the Euler form induces the perfect pairing \eqref{eqn:euler2}. To compare this to the intersection pairing, first apply the isomorphism $\alpha\colon F^1/F^2\to \NS(X)$ from Proposition~\ref{prop:Fmperp}\three. The inverse map satisfies $\alpha^{-1}([L]) = [L]-[\mathcal{O}_X]\mod F^2$. Since
 \[
 \chi(\alpha^{-1}([L]),\mathcal{O}_C)  = \chi(L,\mathcal{O}_C)-\chi(\mathcal{O}_X,\mathcal{O}_C)  =  \deg L^{-1}\vert_C = -\deg L\vert_C,
 \]
 the perfect pairing from \eqref{eqn:euler2} is $-1$ times the intersection pairing $D\cdot C = \deg \mathcal{O}_X(D)\vert_C$. 
 \end{proof}

\section{Noncommutative crepant resolutions and quiver moduli}
We now introduce the algebras $A$ and the varieties $X$ that are of primary interest.
\subsection{Noncommutative crepant resolutions}
 Let $R$ be a finitely generated $\kk$-algebra that is normal, Gorenstein and of Krull dimension three. A $\kk$-algebra $A$ of finite global dimension is a \emph{noncommutative crepant resolution} (NCCR) \emph{of $R$} if there is a reflexive $R$-module $M$ such that $A:=\End_R(M)$ is Cohen--Macaulay as an $R$-module~\cite{VdB04}. Choose once and for all a decomposition $M \cong \bigoplus_{0\leq i\leq r} M_i$ into finitely many indecomposable, reflexive $R$-modules.
 
 \begin{lemma}
 \label{lem:quiverpresentation}
 There is a finite, connected quiver $Q$ and an ideal $I\subset \kk Q$ such that $A\cong \kk Q/I$.
  \end{lemma}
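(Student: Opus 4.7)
The plan is to read off a quiver presentation directly from the decomposition $M=\bigoplus_{i=0}^r M_i$ together with the finite generation of $A$ as a $\kk$-algebra. The canonical projections $M\twoheadrightarrow M_i\hookrightarrow M$ yield pairwise orthogonal idempotents $e_0,\ldots,e_r\in A$ with $\sum_i e_i = 1$, and since each $M_i$ is indecomposable, $e_iAe_i=\End_R(M_i)$ admits no nontrivial idempotents, so each $e_i$ is primitive. Under this partition $A=\bigoplus_{i,j} e_jAe_i$ with $e_jAe_i=\Hom_R(M_i,M_j)$, which will play the role of the bimodule of paths from $i$ to $j$ once the quiver is built.

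Next I would observe that $A$ is finitely generated as a $\kk$-algebra: $R$ is finitely generated over $\kk$ by hypothesis, $M$ is a finitely generated $R$-module (being reflexive), and hence $A=\End_R(M)$ is a finitely generated $R$-module and \emph{a fortiori} a finitely generated $\kk$-algebra. Fix any finite generating set of $A$; inserting $1=\sum e_i$ on both sides and expanding, each generator $x$ may be replaced by its finitely many components $e_jxe_i$ lying in the individual pieces. The outcome is, for every ordered pair $(i,j)$, a finite set $S_{ij}\subset e_jAe_i$ such that $\{e_i\}_i\cup\bigsqcup_{i,j} S_{ij}$ generates $A$ as a $\kk$-algebra.

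Define $Q$ to be the finite quiver with vertex set $Q_0=\{0,\ldots,r\}$ and arrow set $Q_1=\bigsqcup_{i,j} S_{ij}$, each $a\in S_{ij}$ regarded as an arrow $i\to j$. Sending the trivial path at $i$ to $e_i$ and each arrow to its image in $A$ extends uniquely to a $\kk$-algebra homomorphism $\pi\colon \kk Q\to A$ by the universal property of path algebras; compatibility of path concatenation with composition in $A$ is automatic because the image of an arrow $i\to j$ lies in $e_jAe_i$. By construction $\pi$ is surjective, so setting $I:=\ker(\pi)$ supplies the presentation $A\cong \kk Q/I$. For connectedness: if the underlying graph of $Q$ were a disjoint union of two non-empty subquivers $Q^{(1)}\sqcup Q^{(2)}$, the idempotents $f_k:=\sum_{i\in Q^{(k)}_0} e_i$ would be central in $A$ because the absence of arrows across the partition forces $f_1Af_2=0=f_2Af_1$, yielding $A\cong f_1A\times f_2A$; as the centre of the NCCR $A$ coincides with $R$, this would produce a nontrivial ring decomposition of $R$, contradicting the standing assumption that $\Spec R$ is connected.

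The only step needing mild care is the redistribution of generators across the idempotent pieces, which is automatic from $1=\sum e_i$, so the whole argument hinges on the finite generation of $A$ over $\kk$. Correspondingly I would not expect a serious mathematical obstacle; the delicate point is really the connectedness of $Q$, which is legitimately an input (the connectedness of $\Spec R$) rather than something derivable in complete generality.
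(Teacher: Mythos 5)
Your proposal is correct and follows essentially the same route as the paper: the decomposition $M=\bigoplus_i M_i$ supplies the complete set of orthogonal idempotents, the vertices are the summands, the arrows come from a finite generating set (the paper organises these as loops for $\kk$-algebra generators of $R$ plus arrows for $R$-module generators of $\Hom_R(M_i,M_j)$, while you split an arbitrary finite $\kk$-algebra generating set of $A$ using the idempotents), and $I$ is the kernel of the resulting surjection $\kk Q\to A$. Your explicit connectedness argument via central idempotents and $Z(A)=R$ is a fine supplement to the paper, which leaves this point implicit (it also follows directly from $e_jAe_i=\Hom_R(M_i,M_j)\neq 0$ for nonzero reflexive modules over the domain $R$, which forces a path between any two vertices).
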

 \begin{proof}
 The chosen decomposition of $M:=\bigoplus_{0\leq i\leq r}M_i$ gives a complete set of orthogonal idempotents $e_i=\id\in \End(M_i)$ of $A$ such that $1=\sum_{0\leq i\leq r} e_i$. To construct the quiver $Q$, first define the vertex set $Q_0=\{0,1,\dots,r\}$. Next introduce a set of loops at each vertex corresponding to a finite set of $\kk$-algebra generators of $R$, and 
 for $0\leq i,j\leq r$, introduce arrows from $i$ to $j$ corresponding to a finite generating set for $\Hom(M_i,M_j)$ as an $R$-module. This determines a surjective $\kk$-algebra homomorphism $\kk Q\to \End(M)$ with kernel $I$. 
 \end{proof}
 
 From now on, we impose the following additional assumption on the presentation of $A$ from Lemma~\ref{lem:quiverpresentation} that appeared originally as \cite[Assumption~5.1]{CIK18}.
  
\begin{assumption}
 \label{ass:KrullSchmidt}
 \begin{enumerate}
 \item[\one] All indecomposable projective $A$-modules occur as summands of $A$, and a given presentation $A\cong\kk Q/I$ corresponds to a unique decomposition of $M= \bigoplus_{i \in Q_0} M_i$ into non-isomorphic, indecomposable reflexive modules; and
 \item[\two] the ideal $I\subset \kk Q$ is generated by linear combinations of paths of length at least one.
 \end{enumerate}
\end{assumption}
 
 \begin{remarks}
  \label{rems:idealidempotent}
 \begin{enumerate}
 \item If the module category of $R$ has the Krull-Schmidt property, then any NCCR is Morita equivalent to an NCCR satisfying Assumption \ref{ass:KrullSchmidt}\one.
\item Assumption~\ref{ass:KrullSchmidt}\two\ ensures that each vertex $i\in Q_0$ gives rise to a vertex simple $A$-module $S_i=\kk e_i$, and that distinct summands of $M$ are non-isomorphic (as an isomorphism $M_i\cong M_j$ forces the relations $aa^\prime-e_i, a^\prime a-e_j\in I$ for some $a, a^\prime \in Q_1$).
\end{enumerate}
\end{remarks}
    
  Assumption~\ref{ass:KrullSchmidt} is well known to hold for the following classes of examples.
 
 \begin{example}
 \label{exa:ghilb}
     For any finite subgroup $\Gamma\subset \SL(3,\kk)$, the skew group algebra $A=\kk[x,y,z]\star\Gamma$ is an NCCR of the $\Gamma$-invariant subalgebra $R=\kk[x,y,z]^\Gamma$. For each  irreducible representation $\rho$ of $\Gamma$, consider the 
     $R$-module $M_\rho:=(\kk[x,y,z]\otimes_\kk \rho^*)^\Gamma$ where $\rho^*$ is
     dual to $\rho$. We have 
     \[
     A\cong\End_R\bigg(\bigoplus_{\rho} M_\rho\otimes_\kk \rho\bigg),
     \]
     so $A$ does not satisfy Assumption~\ref{ass:KrullSchmidt} if  $\dim_\kk \rho>1$ for some $\rho$. However, $A$ is Morita equivalent to  the algebra $\End_R(\bigoplus_{\rho} M_\rho)$ which does satisfy Assumption~\ref{ass:KrullSchmidt}. 
   \end{example}
 
 \begin{example}
 \label{exa:dimers}
 Let $A$ denote a CY-3 toric order in the sense of Bocklandt~\cite{Bocklandt12}. Then \emph{ibid}.\ shows that every such algebra $A$ arises as the path algebra modulo relations of a consistent dimer model on a real 2-torus. In this situation, Broomhead~\cite{Broomhead12} proves that $A$ is an NCCR of its centre $R$ which is a Gorenstein normal semigroup algebra of Krull dimension three. Conversely, every such algebra $R$ admits an NCCR that arises in this way by \cite{Broomhead12,IshiiUeda15}.
   \end{example}

   Let $D^b(A)$ (resp.\ $\Dfin(A)$) denote the bounded derived category of $A$-modules (resp. finite-dimensional $A$-modules), and write $K(A)$ (resp.\ $\Kfin(A)$) for the Grothendieck group of this category.
 Since $A$ has finite global dimension, the Euler form $\chi_A\colon K(A)\times \Kfin(A)\to \ZZ$ is the bilinear form satisfying
 \[
 \chi_A(E,F) = \sum_{i\in \ZZ} (-1)^i \dim_\kk \Ext^i_A(E,F)
 \]
 for $E\in K(A)$ and $F\in \Kfin(A)$. The \emph{numerical Grothendieck group} for $A$ is defined to be  
 \[
 \Knum(A):= \Kfin(A)/K(A)^\perp.
 \]
 Each vertex $i\in Q_0$ determines an indecomposable projective $A$-module $P_i:= Ae_i$. Since $A$ has finite global dimension, the classes $[P_i]\in K(A)$ for $i\in Q_0$ generate $K(A)$ over $\ZZ$. The following stronger statement is immediate from \cite[Lemma~7.1.1]{BCZ17}.
  
 \begin{lemma}
 \label{lem:NumKgroupNCCR}
 We have $K(A)\cong \bigoplus_{i\in Q_0} \ZZ[P_i]$, $\Knum(A)\cong \bigoplus_{i\in Q_0} \ZZ[S_i]$, and $\chi_A$ descends to a perfect pairing $\chi_A\colon K(A)\times \Knum(A)\to \ZZ$.
 \end{lemma}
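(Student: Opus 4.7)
The plan is to exhibit the indecomposable projectives $\{[P_i]\}_{i \in Q_0}$ and vertex simples $\{[S_i]\}_{i \in Q_0}$ as mutually dual $\ZZ$-bases under the Euler form. The crucial computation is
\[
\chi_A(P_i, S_j) = \delta_{ij},
\]
which holds because $P_i = Ae_i$ is projective (so $\Ext^k_A(P_i, S_j) = 0$ for $k \geq 1$) and because $\Hom_A(P_i, S_j) = e_i S_j$ is one-dimensional when $i = j$ and zero otherwise; Assumption~\ref{ass:KrullSchmidt}\two\ ensures that $S_j = \kk e_j$ really is simple, as noted in Remark~\ref{rems:idealidempotent}.

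Next I would check that the $[P_i]$ and $[S_i]$ generate the relevant Grothendieck groups. Finite global dimension of $A$ combined with the decomposition $A \cong \bigoplus_{i\in Q_0} P_i$ implies that every class in $K(A)$ is a $\ZZ$-linear combination of the $[P_i]$. On the other side, every finite-dimensional $A$-module admits a Jordan--H\"older filtration, and Assumption~\ref{ass:KrullSchmidt}\two\ rules out any simple module other than the vertex simples (an isomorphism $S_i \cong S_j$ for $i \neq j$ would be witnessed by paths $a, a'$ with $aa' - e_i, a'a - e_j \in I$, contradicting the hypothesis that $I$ is generated by paths of length at least one). Hence the $[S_i]$ generate $\Kfin(A)$.

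To finish, the matrix of $\chi_A$ on these generating sets is the identity, which immediately forces $\ZZ$-linear independence on both sides. Thus $K(A) = \bigoplus_{i\in Q_0} \ZZ[P_i]$, while the images of the $[S_i]$ are linearly independent generators of $\Knum(A) = \Kfin(A)/K(A)^\perp$. Because the induced map $K(A) \to \Hom_\ZZ(\Knum(A),\ZZ)$ sends each $[P_i]$ to the functional dual to $[S_i]$, it is an isomorphism; symmetrically, the quotient map $\Kfin(A) \to \Knum(A)$ must be injective on the sublattice spanned by the $[S_i]$, giving $\Knum(A) \cong \bigoplus_{i\in Q_0} \ZZ[S_i]$ together with perfectness of the induced pairing. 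The one genuinely delicate point is the identification of the simple $A$-modules with the vertex simples, which Assumption~\ref{ass:KrullSchmidt}\two\ exists precisely to guarantee; once that input is secured, everything else is bookkeeping around the identity matrix $(\chi_A(P_i,S_j))$.
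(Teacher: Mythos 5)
The Euler-form computation $\chi_A(P_i,S_j)=\delta_{ij}$ and the deduction of linear independence from it are fine, and since the paper itself disposes of this lemma with a citation to \cite[Lemma~7.1.1]{BCZ17}, a direct argument is a reasonable thing to attempt. However, there is a genuine gap on the simple side: Assumption~\ref{ass:KrullSchmidt}\two\ does \emph{not} rule out finite-dimensional simple $A$-modules other than the vertex simples. It guarantees that the vertex simples exist and (together with the remark in the paper) that they are pairwise non-isomorphic, but the standard fact ``all simples of $\kk Q/I$ are vertex simples'' requires $A$ to be finite dimensional with $I$ admissible, whereas here $A=\End_R(M)$ is an infinite-dimensional algebra, module-finite over the three-dimensional ring $R$. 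Concretely, at a closed point of $\Spec R$ where $M$ is locally free, $A$ localises to a matrix algebra over the local ring, and the corresponding simple $A$-module has dimension vector $v$, not a coordinate vector; already for the one-vertex example $A=\kk[x,y,z]$ (three loops, commutativity relations) the skyscrapers at points other than the origin are finite-dimensional simples distinct from the vertex simple. So $\Kfin(A)$ is \emph{not} generated by the $[S_i]$, your Jordan--H\"older step fails, and with it the claim that the $[S_i]$ generate $\Knum(A)$ -- which is exactly the nontrivial content of the lemma, and exactly why it is stated for $\Knum(A)=\Kfin(A)/K(A)^\perp$ rather than for $\Kfin(A)$.

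The repair uses the projective side you already have. Since $A$ has finite global dimension and the $[P_i]$ generate $K(A)$ (this tacitly uses Assumption~\ref{ass:KrullSchmidt}\one\ so that every finitely generated projective is a sum of the $P_i$; the paper glosses this too), the homomorphism $\Kfin(A)\to \ZZ^{Q_0}$ sending $[N]$ to $\big(\chi_A(P_i,N)\big)_{i\in Q_0}=\big(\dim_\kk e_iN\big)_{i\in Q_0}$ has kernel precisely $K(A)^\perp$, so it identifies $\Knum(A)$ with a subgroup of $\ZZ^{Q_0}$. The images of the $[S_i]$ are the standard basis vectors, so this subgroup is all of $\ZZ^{Q_0}$ and $\Knum(A)\cong\bigoplus_{i\in Q_0}\ZZ[S_i]$; in particular the ``extra'' simples do contribute, but only through their dimension vectors (the simple at a generic point becomes $\sum_i v_i[S_i]$ numerically). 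Once this is in place, your bookkeeping with the identity matrix $\big(\chi_A(P_i,S_j)\big)$ does yield freeness of $K(A)$ on the $[P_i]$ and perfectness of the descended pairing, recovering the statement.
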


 \subsection{Quiver moduli spaces}
 Define the vector $v = \sum_i v_i[S_i]\in \Knum(A)$ whose components satisfy $v_i=\rank_R(M_i)$ for $0\leq i\leq r$. After replacing $A$ by a Morita equivalent algebra if necessary, we may assume that $v$ is indivisible~\cite[Section 6.3]{VdB04}. An $A$-module $N$ of dimension vector $v$ satisfies $v_i=\dim_\kk e_iN$ for all $i\in Q_0$. Consider the lattice
   \[
   \Theta:=\big\{\theta\in K(A)=\Hom\big(\Knum(A),\ZZ\big) \mid \chi_A(\theta,v)=0\big\}.
   \]
 Write $\theta(v):=\chi_A(\theta,v)$. 
 Lemma~\ref{lem:NumKgroupNCCR} implies that each $\theta\in \Theta$ is of the form $\theta=\sum_{i\in Q_0} \theta_i[P_i]$ with $\theta_i\in \ZZ$ for all $i\in Q_0$. Consider also the rational vector space $\Theta_{\QQ}:=\Theta\otimes_{\ZZ}\QQ$. 
 
 For any $\theta\in \Theta_{\QQ}$, an $A$-module $N$ of dimension vector $v$ is said to be $\theta$-semistable (resp.\ $\theta$-stable) if  $\theta(N'):= \theta(\dim_\kk N')\geq 0$ (resp.\ $\theta(N')>0$) for every $A$-submodule $0\subsetneq N'\subsetneq N$. 
 The presentation of $A$ as a quiver algebra from Lemma~\ref{lem:quiverpresentation} allows us to apply the work of King~\cite{King94} to construct the coarse moduli space $\mathcal{M}_\theta(A,v)$ of $\theta$-semistable $A$-modules of dimension vector $v$ (upto S-equivalence) as a GIT quotient. We say $\theta$ is \emph{generic} if every $\theta$-semistable $A$-module of dimension vector $v$ is $\theta$-stable. As is standard in GIT, the set of generic parameters in $\Theta_{\QQ}$ decomposes into a union of GIT chambers, where $\theta, \theta\in \Theta_{\QQ}$ lie in the same chamber if and only if the notions of $\theta$-stability and $\theta'$-stability coincide.

 For any chamber $C\subseteq \Theta_{\QQ}$ and for any $\theta\in C$, King~\cite[Proposition~5.3]{King94} established that $\mathcal{M}_\theta(A,v)$ is the fine moduli space of $\theta$-stable $A$-modules of dimension vector $v$ (upto isomorphism). In this case, the moduli space $\mathcal{M}_\theta(A,v)$ carries a tautological vector bundle
 \[
 T=\bigoplus_{i\in Q_0}T_i
 \]
whose fibre over any closed point $[N]\in \mathcal{M}_\theta(A,v)$ is the corresponding $\theta$-stable $A$-module $N$. In addition, there is a tautological $\kk$-algebra homomorphism $A\to \End(T)$.

  The following result of Van den Bergh~\cite[Theorem~6.3.1, Remark~6.6.1]{VdB04} builds on the original work by Bridgeland--King--Reid~\cite{BKR01}. For a CY-3 toric order $A$, an independent proof of this result is due to  Ishii--Ueda~\cite{IshiiUeda08,IshiiUeda15}.
  Note that 
  a simplified proof of part \one\ appears in \cite[Proposition~A.3]{CIK18}. 

\begin{theorem}
\label{thm:BKR}
Let $A$ be an NCCR of $R$ satisfying Assumption~\ref{ass:KrullSchmidt}. Then for any chamber $C\subseteq \Theta_{\QQ}$ and for any $\theta\in C$, we have that:
\begin{enumerate}
\item[\one] the fine moduli space $X:=\mathcal{M}_\theta(A,v)$ is connected;
\item[\two] there is a projective crepant resolution $f_\theta\colon X\to \Spec R$; and
\item[\three] the bundle $T^\vee:=\mathcal{H}om_{\mathcal{O}_X}(T,\mathcal{O}_X)$ dual to the tautological bundle $T$
on $\mathcal{M}_\theta(A,v)$ is a tilting bundle, giving an equivalence 
 \begin{equation}
 \label{eqn:derivedequivPhi}
 \Phi(-):= \mathbf{R}\!\Hom_{\mathcal{O}_X}\!(T^\vee,-)\colon D^b(X)\longrightarrow D^b(A)
 \end{equation}
 of triangulated categories determined by $T^\vee$, with quasi-inverse
 \begin{equation}
 \label{eqn:derivedequivalenceadjoint}
 \Psi(-):= T^\vee\otimes_{A}- \colon D^b(A)\longrightarrow D^b(X).
 \end{equation} 
\end{enumerate}
\end{theorem}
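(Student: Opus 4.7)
This result compiles Van den Bergh's main theorem from [VdB04, Theorem~6.3.1, Remark~6.6.1], together with the streamlined argument for \one\ in [CIK18, Proposition~A.3] (the toric case admitting the independent proof of [IshiiUeda08, IshiiUeda15]). The plan is to treat \three\ as the load-bearing statement and derive \two\ and \one\ as consequences.

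For \three, the GIT construction produces, together with the moduli space $X = \mathcal{M}_\theta(A,v)$ and the tautological bundle $T = \bigoplus_i T_i$, a canonical $\kk$-algebra homomorphism $A \to \End_{\mathcal{O}_X}(T)$ recording the $A$-module structure on each fibre. Verifying that $T^\vee$ is tilting amounts to three checks: (a) this map is an isomorphism; (b) $\Ext^{>0}_{\mathcal{O}_X}(T, T) = 0$; and (c) the perfect object $T^\vee$ classically generates $D^b(X)$. I would follow Van den Bergh's strategy of completing $R$ at closed points and exploiting that $A$ is Cohen--Macaulay of finite global dimension over a Gorenstein threefold. The key input for (b) is the vanishing $\Ext^i_A(N, N') = 0$ between nonisomorphic $\theta$-stable modules $N, N'$ in the appropriate degrees, combined with a local-to-global spectral sequence; step (a) follows from the universal property of the fine moduli space together with reflexivity of $M$. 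Generation (c) comes via a Bondal--Van den Bergh style spanning argument applied to skyscrapers $\mathcal{O}_x$ for closed points $x \in X$, which are images under $\Psi$ of the classes $[N_x]$ of the $\theta$-stable modules parametrised by $X$.

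Given \three, part \two\ is essentially formal: the identification $\Hom_R(A,R) \cong A$ built into the NCCR hypothesis (together with Gorensteinness of $R$) forces the derived equivalence $\Psi$ to intertwine the trivial Serre structure on $D^b(A)$ with Serre duality on $D^b(X)$, yielding $\omega_{X/\Spec R} \cong \mathcal{O}_X$; projectivity and the structure map to $\Spec R$ are automatic from GIT with polarisation $L_C(\theta)$; smoothness of $X$ transfers from finite global dimension of $A$ across the equivalence; and birationality is witnessed over the smooth locus of $\Spec R$, where $A$ is Morita equivalent to $R$ so that the moduli problem reduces to $\Spec R$ itself. For \one, I would follow [CIK18]: the equivalence identifies $H^0(X, \mathcal{O}_X)$ with the centre $Z(A) \cong R$, and connectedness of $\Spec R$ (inherited from $R$ being a normal Gorenstein domain) transfers to $X$. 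The principal obstacle throughout is \three, and within it the vanishing and generation steps (b)--(c); these are not formal and require the full force of the NCCR hypothesis, being the technical heart of Van den Bergh's argument.
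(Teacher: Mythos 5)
This theorem is not proved in the paper at all: it is quoted as a known result of Van den Bergh \cite[Theorem~6.3.1, Remark~6.6.1]{VdB04}, with the simplified proof of part \one\ attributed to \cite[Proposition~A.3]{CIK18} and the toric case to Ishii--Ueda, so there is no in-paper argument to measure your proposal against. Your outline is a reasonable reconstruction of the cited strategy, and you correctly identify the genuine technical content (the Ext-vanishing and generation steps for the tilting statement), but be aware that you defer exactly those steps to the references rather than proving them --- which is acceptable here only because the paper itself defers the entire theorem.

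Two points of comparison with the actual cited proof are worth flagging. First, the logical order differs: Van den Bergh (following Bridgeland--King--Reid) does not first prove that $T^\vee$ is tilting on a possibly singular $X$ and then recover smoothness afterwards; smoothness, crepancy and the equivalence are established together via the fibre-dimension/intersection-theorem argument. Your alternative route --- deduce $\omega_{X/\Spec R}\cong\mathcal{O}_X$ from the symmetric $R$-order structure of $A$ via relative Serre duality, and then transfer smoothness across the equivalence from finite global dimension --- is viable, but it quietly invokes later results in the spirit of Iyama--Wemyss (the very statement \cite[Theorem~1.5]{IyamaWemyss14b} that this paper uses elsewhere), so it is a genuinely different and less self-contained decomposition than the one in \cite{VdB04}. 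Second, a small inaccuracy: birationality is witnessed over the open locus where the reflexive module $M$ is locally free (dense, with complement of codimension at least two), not over the smooth locus of $\Spec R$; a reflexive module over a three-dimensional regular local ring need not be free, so ``smooth locus'' is not the right locus for the Morita reduction, though the correction does not affect the argument.
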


\begin{remarks}
\label{rems:phipsi}
\begin{enumerate}
    \item Note that $\Phi_C(E) = \mathbf{R}\!\Hom_Y(T^\vee,E)$ is a right module over $\End(T^\vee)$ and hence a left module over $\End(T^\vee)^{\text{op}}\cong \End(T) \cong (\Phi\circ \Psi)(A)\cong A$.
     \item For each $i\in Q_0$, we have $\Psi(P_i) = T^\vee\otimes_A A e_i = T_i^\vee$. 
 \item The derived equivalences $\Phi$ and $\Psi$ induce $\ZZ$-linear isomorphisms on Grothendieck groups $\varphi\colon K(X)\to K(A)$ and $\psi:=\varphi^{-1}$ respectively.  Combining (2) above with Lemma~\ref{lem:NumKgroupNCCR} shows that $K(X)\cong \bigoplus_{i\in Q_0} \ZZ[T_i^\vee]$.
 \end{enumerate}
\end{remarks}

 Let $C\subset \Theta_{\QQ}$ be a chamber and write $X:= \mathcal{M}_\theta(A,v)$ for $\theta\in C$.  The \emph{linearisation map} for the chamber $C$ is the $\ZZ$-linear map $L_C\colon \Theta \rightarrow \NS(X)$ defined by
  \begin{equation}
 \label{eqn:LC}
 L_{\chamber}(\eta) =  \bigotimes_{i\in Q_0} \det(T_i)^{\otimes \eta_i}\quad\text{for }\eta=(\eta_i)\in \Theta.
 \end{equation}
  
 \begin{corollary}
 \label{cor:tilting}
 Under Assumption~\ref{ass:KrullSchmidt}, 
 the linearisation map $L_C$ is surjective.  
 \end{corollary}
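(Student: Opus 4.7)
The plan is to factor $L_C$ through the numerical Grothendieck group of $X$ and then invoke the surjectivity of $\det$ from Proposition~\ref{prop:Fmperp}.

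First, by Theorem~\ref{thm:BKR} and Remarks~\ref{rems:phipsi}, the derived equivalence $\Psi$ induces a $\ZZ$-linear isomorphism $\psi\colon K(A)\xrightarrow{\sim}K(X)$ with $\psi([P_i])=[T_i^\vee]$, and $\{[T_i^\vee]\}_{i\in Q_0}$ is a $\ZZ$-basis of $K(X)$. The fibre of $T_i$ at a closed point $[N]\in X$ is the vector space $e_iN$ of dimension $v_i$, so $\rank(T_i^\vee)=\rank(T_i)=v_i$. Using Lemma~\ref{lem:NumKgroupNCCR} to evaluate $\chi_A(\eta,v)=\sum_i\eta_i v_i$, the condition defining $\Theta$ coincides under $\psi$ with the vanishing of the rank on $K(X)$. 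Postcomposing with the surjection $K(X)\twoheadrightarrow\Knumperf(X)$ and using that $\rank$ factors through $\Knumperf(X)$ by Proposition~\ref{prop:Fmperp}\one, the image of $\Theta$ lies in $F^1=\ker(\rank)$. It in fact equals $F^1$: any class in $F^1$ lifts to $K(X)$, the lift is automatically of rank zero, and therefore lies in $\psi(\Theta)$.

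Second, Proposition~\ref{prop:Fmperp}\three\ says that $\det\colon F^1\to\NS(X)$ is surjective, and a direct computation shows that the composition
\[
\Theta\xrightarrow{\psi}K(X)\twoheadrightarrow\Knumperf(X)\xrightarrow{\det}\NS(X)
\]
sends $\eta=(\eta_i)$ to $\bigotimes_i\det(T_i^\vee)^{\eta_i}=L_C(\eta)^{-1}$. Hence the image of $L_C$ coincides with the image of this composition, and surjectivity follows by composing the two surjections just established.

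There is no serious obstacle in this argument; the proof amounts to assembling the results of Section~\ref{sec:numGrothGroups} with the structure of the derived equivalence. The only point requiring care is the matching between the algebraic condition $\chi_A(\eta,v)=0$ defining $\Theta$ and the vanishing of the rank on $K(X)$, which rests on $\{[T_i^\vee]\}_{i\in Q_0}$ being a $\ZZ$-basis of $K(X)$ with $\rank(T_i^\vee)=v_i$.
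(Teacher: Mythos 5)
Your proof is correct, but it takes a more roundabout (and in one respect more careful) route than the paper's own proof of Corollary~\ref{cor:tilting}. The paper argues in two lines: $\det\colon K(X)\to\NS(X)$ is surjective, and $K(X)=\bigoplus_{i\in Q_0}\ZZ[T_i^\vee]$ by Remark~\ref{rems:phipsi}(3), so the classes $\det(T_i)^{-1}$ generate $\NS(X)$. You instead push $\Theta$ through $K(X)\twoheadrightarrow\Knumperf(X)$, identify its image with $F^1=\ker(\rank)$ using $\rank(T_i^\vee)=v_i$ and $\chi_A(\eta,v)=\sum_i\eta_iv_i$, and then invoke the surjectivity of $\det\colon F^1\to\NS(X)$ from Proposition~\ref{prop:Fmperp}\three; this is essentially the argument the paper deploys later in Lemma~\ref{lem:CIrevisited} (including the computation $\det(\psi(\eta))=L_C(\eta)^{-1}$ of \eqref{eqn:detLCinverse}). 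What your route buys is an explicit treatment of the point that the exponent vector must lie in the corank-one lattice $\Theta$: generation of $\NS(X)$ by the $\det(T_i)^{\pm1}$ with unconstrained exponents gives surjectivity of the obvious extension of $L_C$ to all of $K(A)$, and passing to $\Theta$ requires exactly the observation hiding in Proposition~\ref{prop:Fmperp}\three\ that rank-zero classes already hit every numerical class (via $\det([L]-[\mathcal{O}_X])=[L]$); the paper's shorter proof leaves this step implicit, deferring it to Lemma~\ref{lem:CIrevisited}. The only mild gloss in your write-up is the identification $K(X)\cong\Kperf(X)$ used when you lift a class of $F^1$ back to $K(X)$ and when you factor through $\Knumperf(X)$; this is harmless since $X$ is a crepant resolution, hence smooth, by Theorem~\ref{thm:BKR}\two.
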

 \begin{proof}
 The homomorphism $\det\colon K(X)\to \NS(X)$ induced by sending each locally-free sheaf to its determinant line bundle is surjective. Remark~\ref{rems:phipsi}(3) gives  $K(X)= \bigoplus_{i\in Q_0} \ZZ[T_i^\vee]$, so the classes of the line bundles $\det(T_i^\vee)\cong \det(T_i)^{-1}$ for $i\in Q_0$ generate $\NS(X)$.
 \end{proof}

 \subsection{The numerical Grothendieck group}
 We now use the derived equivalence \eqref{eqn:derivedequivPhi} to transfer the results of Lemma~\ref{lem:NumKgroupNCCR} to the geometric side.
  
 \begin{lemma}
 \label{lem:NumKgroupX}
 \begin{enumerate}
     \item [\one] There is a $\ZZ$-linear 
     isomorphism $\varphi^*\colon \Knum(A)\to \Knumc(X)$ with inverse $\psi^*:=(\varphi^*)^{-1}$ satisfying
   $\varphi^*\big([S_i]\big) = [\Psi(S_i)]$ for $i\in Q_0$. 
     \item[\two]  The isomorphisms from part \one\ identify $\ZZ v$ with $F_0$, inducing isomorphisms
   \begin{equation}
       \label{eqn:KnumgroupisomsmodF0}
 \begin{tikzcd}
\Knum(A)/\ZZ v \ar[r,shift left=0.5ex,"\varphi^*"] & \Knumc(X)/F_0.\ar[l,shift left=0.5ex,"\psi^*"]
\end{tikzcd}
  \end{equation}
     \item[\three] We have $\Knumc(X)\cong \bigoplus_{i\in Q_0} \ZZ[\Psi(S_i)]$, and the Euler form induces a perfect pairing 
     \[
     \chi\colon K(X)\times \Knumc(X)\to \ZZ.
     \]
 \end{enumerate}
 \end{lemma}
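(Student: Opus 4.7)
The plan is to transfer Lemma~\ref{lem:NumKgroupNCCR} to the geometric side using the derived equivalences $\Phi$ and $\Psi$ of Theorem~\ref{thm:BKR}, after first checking that they restrict to quasi-inverse equivalences between $\Dfin(A)$ and $D^b_c(X)$. For any $M \in \Dfin(A)$, each cohomology module is Artinian over $R\subseteq A$ and hence supported at finitely many closed points of $\Spec R$, so $\Psi(M) = T^\vee\otimes^{\mathbf{L}}_A M$ has support contained in finitely many fibres of the proper morphism $f_\theta$ from Theorem~\ref{thm:BKR}\two; in particular this support is proper over $\kk$. Conversely, for $E \in D^b_c(X)$, the identification $\Phi(E) = \mathbf{R}f_{\theta *}(T \otimes^{\mathbf{L}} E)$ (valid because $T^\vee$ is locally free and $\Spec R$ is affine) exhibits $\Phi(E)$ as a bounded complex of finitely generated $R$-modules whose support is the finite set $f_\theta(\supp(E))$, hence finite-dimensional over $\kk$. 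Since $\Psi$ preserves Euler forms, the induced map $\Kfin(A) \to K_c(X)$ descends to a $\ZZ$-linear isomorphism $\varphi^*\colon \Knum(A) \to \Knumc(X)$ with inverse $\psi^*$ induced by the restriction of $\Phi$, and $\varphi^*([S_i]) = [\Psi(S_i)]$ is tautological from the construction; this gives \one.

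For \two, Corollary~\ref{cor:GensFm} identifies $F_0 \subseteq \Knumc(X)$ with the subgroup generated by the classes $[\mathcal{O}_x]$ for closed points $x \in X$. Since $X$ is smooth (so every bounded complex is perfect) and $\kk$ is algebraically closed, \eqref{eqn:rankP} gives $\chi(P, \mathcal{O}_x) = \rank(P)$ for every $[P] \in K(X)$; this forces $[\mathcal{O}_x] = [\mathcal{O}_{x'}]$ in $\Knumc(X)$ for all closed points, while $\chi(\mathcal{O}_X, \mathcal{O}_x) = 1$ shows this common class is nonzero, so $F_0 \cong \ZZ \cdot [\mathcal{O}_x]$. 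A direct calculation gives $\Phi(\mathcal{O}_x) = \Hom_{\mathcal{O}_X}(T^\vee, \mathcal{O}_x) = T|_x$, which recovers the $\theta$-stable $A$-module parameterised by $x$; this module has dimension vector $v$, so $\psi^*([\mathcal{O}_x]) = v$ in $\Knum(A)$, and thus $\psi^*$ identifies $F_0$ with $\ZZ v$.

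Part \three\ then follows with little further work. The decomposition $\Knumc(X) \cong \bigoplus_{i\in Q_0} \ZZ[\Psi(S_i)]$ is the image under $\varphi^*$ of the basis from Lemma~\ref{lem:NumKgroupNCCR}. For the perfect pairing, Remark~\ref{rems:phipsi}(3) provides a $\ZZ$-linear isomorphism $\varphi\colon K(X) \to K(A)$, and the compatibility $\chi(E, F) = \chi_A(\varphi(E), \psi^*(F))$ that follows from $\Phi$ being a triangulated equivalence transfers the perfect pairing $\chi_A\colon K(A) \times \Knum(A) \to \ZZ$ of Lemma~\ref{lem:NumKgroupNCCR} to the claimed perfect pairing $\chi\colon K(X) \times \Knumc(X) \to \ZZ$. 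The main obstacle is the opening step: confirming that the derived equivalence genuinely restricts to the compact-support and finite-dimensional subcategories, which relies essentially on Theorem~\ref{thm:BKR}\two\ and the fact that $T^\vee$ is locally free of finite rank. Once this is in place, the remaining arguments are essentially formal bookkeeping.
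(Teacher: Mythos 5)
Your proposal is correct and follows essentially the same route as the paper: restrict the tilting equivalences $\Phi$ and $\Psi$ to $D^b_c(X)$ and $\Dfin(A)$, pass to numerical Grothendieck groups, compute $\Phi(\mathcal{O}_x)$ as the fibre of $T$ to identify $F_0$ with $\ZZ v$, and transfer the perfect pairing of Lemma~\ref{lem:NumKgroupNCCR} via the adjoint compatibility of $\varphi$ and $\psi^*$. The only difference is that where the paper cites \cite[Theorem~7.2.1]{BCZ17} for the restriction of the equivalences and the induced isomorphism on numerical Grothendieck groups, you prove this directly (using properness of $f_\theta$ and affineness of $\Spec R$), which is a valid self-contained substitute.
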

 \begin{proof}
  Following \cite[Theorem~7.2.1]{BCZ17}, the restriction of $\Phi$ to the full subcategory of objects with proper support $D^b_c(X)$ gives an equivalence of categories $\Phi\colon D^b_c(X)\to\Dfin(A)$ with quasi-inverse given by the restriction of $\Psi$. Moreover, \emph{ibid.\ }shows that the equivalences $\Phi$ and $\Psi$ induce the isomorphisms $\varphi^*$ and $\psi^*$ respectively from part \one\ (see the notational warning in Remark~\ref{rem:adjoint}), so $\varphi^*\big([S_i]\big)= \big[\Psi(S_i)\big]$ for $i\in Q_0$. This proves \one. For part \two, let $x\in X$ be a closed point. Then
 \begin{equation}
     \label{eqn:derivedOy}
  \Phi(\mathcal{O}_x) = \mathbf{R}\Gamma\circ  \mathbf{R}\mathcal{H}om(\mathcal{O}_{X},T\otimes \mathcal{O}_x) = \mathbf{R}\Gamma(T\otimes \mathcal{O}_x)= \Gamma(T\otimes \mathcal{O}_x)
 \end{equation}
 is the fibre of the tautological bundle $T$ over $x$, so the induced map on Grothendieck groups satisfies $\psi^*([\mathcal{O}_x]) = [\Gamma(T\otimes \mathcal{O}_x)] = v$. Since $X$ is connected, $[\mathcal{O}_x]$ spans $F_0$ over $\ZZ$, so $\psi^*$ identifies $F_0$ with $\ZZ v$, so the isomorphisms $\varphi^*$ and $\psi^*$ from part \one\ induce the isomorphisms \eqref{eqn:KnumgroupisomsmodF0} as required. Part \three\ follows by combining the isomorphisms in part \one\ and Remark~\ref{rems:phipsi}(3) with Lemma~\ref{lem:NumKgroupNCCR}.
  \end{proof}
  
  \begin{remark}
  \label{rem:adjoint}
  The isomorphisms $\varphi^*$ and $\psi^*$ from Lemma~\ref{lem:NumKgroupX}\one\ are adjoint (with respect to the Euler forms $\chi_A$ and $\chi$) to the isomorphisms $\varphi$ and $\psi$ from Remark~\ref{rems:phipsi}\three\ respectively, i.e.\ for $E\in K(A)$ and $F\in \Knumc(X)$, we have $\chi\big(\psi(E),F\big)  = \chi_A\big(E,\psi^*(F)\big)$, and 
 similarly for $\varphi$ and $\varphi^*$. 
   \end{remark}

 \section{Gale duality for the linearisation map}
 \subsection{Gale duality over $\ZZ$}
 \label{sec:Galedual}
 We first recall the construction of the Gale dual for a homomorphism of finitely generated and free abelian groups. Let $\Lambda$ be one such group and let $\{\lambda_1, \dots, \lambda_n\}$ be a collection of elements that span $\Lambda$ over $\ZZ$; here, repetition is allowed. Define a group homomorphism $L\colon \ZZ^n\to \Lambda$ on the standard basis $\{\textbf{e}_1,\dots,\textbf{e}_n\}$ of $\ZZ^n$ by setting $L(\textbf{e}_i)=\lambda_i$ for $1\leq i\leq n$. We obtain a short exact sequence
 \[
\begin{tikzcd}
0 \ar[r] & \ker(L)\ar[r] & \ZZ^n\ar[r,"L"]&  \Lambda \ar[r]&  0. 
\end{tikzcd}
\]
Since $\Lambda$ is free, applying $\Hom_{\ZZ}(-,\ZZ)$ determines a dual short exact sequence
\[
\begin{tikzcd}
0  & \ker(L)^\vee\ar[l] & (\ZZ^n)^\vee \ar[l,swap,"G"] &  \Lambda^\vee\ar[l] &  0. \ar[l]
\end{tikzcd}
\]
If $\textbf{e}_1^\vee,\dots, \textbf{e}_n^\vee\in (\ZZ^n)^\vee$ is the dual $\ZZ$-basis, then the collection $\{G(\textbf{e}_1^\vee),\dots, G(\textbf{e}_n^\vee)\}$ in $\ker(L)^\vee$ is the \emph{Gale dual configuration} to $\{\lambda_1, \dots, \lambda_n\}$ in $\Lambda$ and we call $G$ the \emph{Gale dual map} to $L$. 

 Since $\ker(L)^\vee$ is also free, it follows similarly that the collection $\{\lambda_1,\dots,\lambda_n\}$ is the Gale dual configuration to $\{G(\textbf{e}^\vee_1),\dots, G(\textbf{e}^\vee_n)\}$ in $\ker(L)^\vee$ and that $L$ is the Gale dual map to $G$.
 
\subsection{The linearisation map via a filtration}
 Let $A$ be an NCCR of of threefold $\kk$-algebra $R$ as above. Suppose in addition that $A$ satisfies Assumption~\ref{ass:KrullSchmidt}, and that $v$ is indivisible. Then for any GIT chamber $C\subseteq \Theta_{\QQ}$ and for any $\theta\in C$, the fine moduli space $X=\mathcal{M}_\theta(A,v)$ and its tautological bundle $T$ satisfy Theorem~\ref{thm:BKR}. In particular, $X$ is a connected and separated scheme of finite type over $\kk$ that is proper over an affine base, so the results of Section~\ref{sec:numGrothGroups} apply.
    
  We now generalise Craw--Ishii~\cite[Section~5.1]{CI04}, but here we work over $\ZZ$ rather than $\QQ$.   
  
  \begin{lemma}
  \label{lem:CIrevisited}
  There is a commutative diagram of finitely generated and free abelian groups
   \begin{equation}
       \label{eqn:LCdiagram}
 \begin{tikzcd}
0 \ar[r] & \ker(L_C)\ar[r]\ar[d] & \Theta\ar[d,"\psi"]\ar[r,"L_C"]&  \NS(X) \ar[r]&  0 \\
0 \ar[r] & F^2\ar[r] & F^1\ar[r]&  F^1/F^2 \ar[u,swap,"\det(-)^{-1}"]\ar[r]&  0
\end{tikzcd}
  \end{equation}
 with exact rows, where the vertical maps are isomorphisms. 
\end{lemma}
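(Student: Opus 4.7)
The plan is to construct the middle vertical arrow as a restriction of the isomorphism $\psi$ from Remark~\ref{rems:phipsi}(3), verify commutativity of the right-hand square, and then deduce the left vertical isomorphism by a diagram chase. The bottom row is short exact by construction of the subgroup $F^2\subseteq F^1$, while the top row is short exact because $L_C$ is surjective by Corollary~\ref{cor:tilting}. Since $X$ is a smooth threefold by Theorem~\ref{thm:BKR}\two, we have $K(X)=\Kperf(X)$; moreover, the perfectness of the Euler pairing $\chi\colon K(X)\times \Knumc(X)\to \ZZ$ from Lemma~\ref{lem:NumKgroupX}\three forces the quotient map $K(X)\to \Knumperf(X)$ to be an isomorphism. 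Composing $\psi\colon K(A)\to K(X)$ with this identification gives an isomorphism $\psi\colon K(A)\to \Knumperf(X)$ between finitely generated free abelian groups of the same rank.

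The key step is to show that $\psi$ restricts to an isomorphism $\Theta\to F^1$. For $\eta\in K(A)$ and any closed point $x\in X$, the rank formula \eqref{eqn:rankP} together with the adjointness in Remark~\ref{rem:adjoint} and the identity $\psi^*([\mathcal{O}_x])=v$ from the proof of Lemma~\ref{lem:NumKgroupX}\two\ give
\[
\rank\bigl(\psi(\eta)\bigr)=\chi\bigl(\psi(\eta),\mathcal{O}_x\bigr)=\chi_A\bigl(\eta,\psi^*([\mathcal{O}_x])\bigr)=\chi_A(\eta,v),
\]
so $\psi(\eta)\in F^1$ if and only if $\eta\in \Theta$, and the restriction is an isomorphism by construction. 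For commutativity of the right-hand square, Remark~\ref{rems:phipsi}(2) gives $\psi(\eta)=\sum_i \eta_i [T_i^\vee]$ for $\eta=\sum_i\eta_i[P_i]$, so
\[
\det\bigl(\psi(\eta)\bigr)^{-1}=\bigotimes_{i\in Q_0}\det(T_i^\vee)^{-\eta_i}=\bigotimes_{i\in Q_0}\det(T_i)^{\eta_i}=L_C(\eta),
\]
matching the definition \eqref{eqn:LC} of $L_C$.

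The right vertical arrow $F^1/F^2\to \NS(X)$ is the composition of the isomorphism $\det\colon F^1/F^2\to \NS(X)$ from Proposition~\ref{prop:Fmperp}\three\ with the inversion automorphism of $\NS(X)$, and is therefore itself an isomorphism. With two of the three vertical arrows identified as isomorphisms and the right-hand square commutative, the short five lemma produces the induced isomorphism $\ker(L_C)\to F^2$ on kernels, completing the diagram. The main delicate point is the identification $\psi(\Theta)=F^1$, which relies on translating the defining condition $\chi_A(\eta,v)=0$ for $\Theta$ into the vanishing of $\rank$ on the image via the adjointness of the Euler forms; every other verification is essentially bookkeeping, given the machinery developed in Section~\ref{sec:numGrothGroups} and earlier in this section.
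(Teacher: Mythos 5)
Your proof is correct and follows essentially the same route as the paper: the key identification $\psi(\Theta)=F^1$ via the adjointness of the Euler forms and $\psi^*([\mathcal{O}_x])=v$, and the commutativity computation $\det(\psi(\eta))^{-1}=L_C(\eta)$, are exactly the paper's argument, with your explicit treatment of $K(X)=\Kperf(X)\cong\Knumperf(X)$ and your five-lemma deduction of $\ker(L_C)\cong F^2$ being only cosmetic variants of the paper's direct identification $\ker(L_C)\cong\ker(\det)\cap F^1$. The one small point the paper adds that you omit is the brief check that all the groups in the diagram are finitely generated and free (e.g.\ $\Theta$ because $v$ is primitive, $\NS(X)$ by numerical equivalence and Corollary~\ref{cor:tilting}), which is needed for the later dualisation but is routine.
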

 \begin{proof}
 The numerical equivalence relation ensures that $\NS(X)$ is free, and it is finitely generated by Corollary~\ref{cor:tilting} (this also follows from the proof of Corollary~\ref{cor:NS}). Note that $\Theta$ is also finitely generated and free, because $v$ is primitive, and hence so too is $\ker(L_C)$. The rows are exact by  Proposition~\ref{prop:Fmperp}\three\ and Corollary~\ref{cor:NS}.  For commutativity, write $\eta=\sum_{i\in Q_0} \eta_i[P_i]\in \Theta$ and compute 
 \begin{equation}
     \label{eqn:detLCinverse} 
 \det\big(\psi(\eta)\big) = \det\bigg(\sum_{i\in Q_0} \eta_i[T_i^\vee]\bigg) = \bigotimes_{i\in Q_0} \det(T_i)^{-\otimes\eta_i} = L_C(\eta)^{-1}
  \end{equation}
 as required. It remains to show that the vertical maps are isomorphisms. Lemma~\ref{lem:NumKgroupX}\two\ establishes that $\psi^*([\mathcal{O}_x]= v$. Thus, an element $\theta\in K(A)$ lies in $\Theta$ if and only if 
 \[
 0 = \theta(v)=\chi_A(\theta,v) = \chi_A\big(\theta,\psi^*([\mathcal{O}_x]\big) = \chi\big(\psi(\theta),[\mathcal{O}_x]\big).
 \]
 Since $[\mathcal{O}_x]$ spans $F_0$, we have that $\theta\in \Theta$ if and only if $\psi(\theta)\in F_0^\perp=F^1$. Therefore, the restriction of $\psi$ identifies $\Theta$ with $F^1$. Finally, \eqref{eqn:detLCinverse} implies that $\psi$ identifies $\ker(L_C)$ with $\ker(\det)=F^2$.
 \end{proof}

  \begin{lemma}
  \label{lem:CIdual}
     Let $G_C$ denote the map that is Gale dual to $L_C$. Then $G_C$ and the map $\psi^*$ from Lemma~\ref{lem:NumKgroupX}~\two\ fit into a commutative diagram of abelian groups 
  \begin{equation}
      \label{eqn:GCdiagram}
 \begin{tikzcd}
0 & \ker(L_C)^\vee\ar[l] & \Knum(A)/\ZZ v\ar[l,swap,"G_C"]&  \NS(X)^\vee\ar[d] \ar[l]&  0\ar[l]\\
0 & F_2/F_1\ar[l]\ar[u] & F_2/F_0\ar[l] \ar[u,"\psi^*"]&  F_1/F_0 \ar[l]&  0\ar[l],
\end{tikzcd}
  \end{equation}
 where the rows are exact and the vertical maps are isomorphisms.
  \end{lemma}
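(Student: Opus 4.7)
The plan is to apply $\Hom_\ZZ(-,\ZZ)$ to the top row of \eqref{eqn:LCdiagram}. Since every term is finitely generated and free, this produces an exact sequence
\[
0 \longrightarrow \NS(X)^\vee \longrightarrow \Theta^\vee \xrightarrow{G_C} \ker(L_C)^\vee \longrightarrow 0,
\]
whose middle map is, by definition, the Gale dual of $L_C$. The remaining task is to identify this sequence termwise with the bottom row of \eqref{eqn:GCdiagram} via the perfect pairings from Lemma~\ref{lem:NumKgroupNCCR} and Lemma~\ref{lem:NumKgroupX}\three, with the middle vertical isomorphism coming from $\psi^*$.

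For the middle term, I would use the perfect pairing $\chi_A\colon K(A)\times \Knum(A)\to \ZZ$ together with the fact that $\Theta=(\ZZ v)^\perp$: since $v$ is indivisible, $\ZZ v$ is saturated in $\Knum(A)$, so $\Knum(A)/\ZZ v$ is free and the induced pairing $\Theta \times \Knum(A)/\ZZ v\to \ZZ$ is perfect, giving $\Theta^\vee \cong \Knum(A)/\ZZ v$. Composing with $\psi^*$ from Lemma~\ref{lem:NumKgroupX}\two\ delivers the middle vertical isomorphism onto $F_2/F_0$. For the right term, Corollary~\ref{cor:NS} supplies a perfect pairing $F^1/F^2\times F_1/F_0\to \ZZ$, giving $(F^1/F^2)^\vee\cong F_1/F_0$; precomposing with the isomorphism $\det\colon F^1/F^2\to \NS(X)$ from Proposition~\ref{prop:Fmperp}\three\ produces the right-hand vertical isomorphism. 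For the left term, Proposition~\ref{prop:Fmperp}\two\ gives $F^2=F_1^\perp$, so the Euler pairing descends to a perfect pairing $F^2\times F_2/F_1\to \ZZ$ and hence $(F^2)^\vee\cong F_2/F_1$; composing with the isomorphism $\ker(L_C)\cong F^2$ from Lemma~\ref{lem:CIrevisited} yields the left-hand vertical isomorphism.

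Commutativity of the two squares is then reduced to two compatibilities, both of which follow from the adjointness statement of Remark~\ref{rem:adjoint}, namely $\chi(\psi(E),F)=\chi_A(E,\psi^*(F))$. For the right square, combining this adjointness with the identity $\det(\psi(\eta))=L_C(\eta)^{-1}$ from \eqref{eqn:detLCinverse} shows that the inclusion $\NS(X)^\vee\hookrightarrow \Theta^\vee$ is carried to the natural inclusion $F_1/F_0\hookrightarrow F_2/F_0$. For the left square, the same adjointness together with $\ker(L_C)\cong F^2$ under $\psi$ shows that the Gale dual $G_C$ corresponds to the quotient map $F_2/F_0\twoheadrightarrow F_2/(F^2)^\perp=F_2/F_1$. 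Exactness of the bottom row then follows immediately from that of the top.

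The main obstacle is pure bookkeeping: tracking which identifications are canonical versus those that come from a perfect pairing, and being unfazed by the sign appearing in Corollary~\ref{cor:NS} (the Euler pairing equals $-1$ times the intersection pairing). This sign influences only the particular choice of isomorphism $\NS(X)^\vee\cong F_1/F_0$, not the commutativity of \eqref{eqn:GCdiagram} nor the exactness of its rows, so it does not affect the statement of the lemma.
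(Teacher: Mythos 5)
Your proposal is correct and follows essentially the same route as the paper: apply $\Hom_{\ZZ}(-,\ZZ)$ to diagram \eqref{eqn:LCdiagram}, identify $\Theta^\vee\cong\Knum(A)/\ZZ v$, $(F^2)^\vee\cong F_2/F_1$ and $(F^1/F^2)^\vee\cong F_1/F_0$ via the perfect pairings, and use the adjointness of Remark~\ref{rem:adjoint} to see that the middle vertical map is $\psi^*=\Hom(\psi,\ZZ)$. You merely spell out a few identifications (saturation of $\ZZ v$, commutativity of the outer squares, the harmless sign from Corollary~\ref{cor:NS}) that the paper leaves implicit.
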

  \begin{proof}
  We simply apply 
  $\Hom(-,\ZZ)$ to the 
  diagram \eqref{eqn:LCdiagram}, but there are several things to check. First, a dimension count gives $\Knumc(X)=F_2$. Proposition~\ref{prop:Fmperp}\two\ implies that $(F^1)^\vee\cong F_2/F_0$ and $(F^2)^\vee\cong F_2/F_1$, so we obtain the short exact sequence along the bottom of \eqref{eqn:GCdiagram}.
  We have $\Theta^\vee \cong \Knum(A)/\ZZ v$, the sequence along the top is exact because $\NS(X)$ is free, and $G_C$ is Gale dual to $L_C$ by Section~\ref{sec:Galedual}. Finally, by applying $\Hom(-,\ZZ)=\chi_A(-,\ZZ)$ to $\Theta$ and $\Hom(-,\ZZ)=\chi(-,\ZZ)$ to $F^1$, the adjoint property of $\psi$ and $\psi^*$ discussed in Remark~\ref{rem:adjoint} implies that $\psi^*=\Hom(\psi,\ZZ)$.
    \end{proof}

 \subsection{The Gale dual map via a filtration}
 \label{sec:Galedualviafiltration}
  The isomorphism $\psi^*$ in the diagram \eqref{eqn:GCdiagram}, as well as the isomorphism $F_2/F_1\to \ker(L_C)^\vee$ obtained from $\psi^*$, allows us to identify the map $G_C$ with the map that appears along the bottom-left of diagram \eqref{eqn:GCdiagram}. From now on, we make this identification, i.e.\ we identify the Gale dual of $L_C$ with the map 
 \begin{equation}
     \label{eqn:GCmap}
 G_C\colon F_2/F_0\longrightarrow F_2/F_1
  \end{equation}
 from diagram \eqref{eqn:GCdiagram}. 
 
 The fact that $X$ has dimension three allows us to describe $G_C$ explicitly. Lemma~\ref{lem:NumKgroupX}\three\ shows that the classes $[\Psi(S_i)]$ for all $i\in Q_0$ provide a $\ZZ$-basis of the group $\Knumc(X)=F_2$, so they generate the quotient $F_2/F_0$. Now Lemma~\ref{lem:NumKgroupX}\two\ tells us that they satisfy the single relation $0=\sum_{i\in Q_0} v_i[\Psi(S_i)]$ in $F_2/F_0$. Recall that for any coherent sheaf $E$ on $X$, we write $\ell_V(E)$ for the length of the stalk of $E$ at the generic point of any irreducible component $V$ of $\supp(E)$.

 \begin{theorem}
 \label{thm:GC}
Let $A$ be an NCCR of $R$ satisfying Assumption~\ref{ass:KrullSchmidt} such that $v$ is indivisible. For any GIT chamber $C\subset \Theta\otimes_{\ZZ} \QQ$, the $\ZZ$-linear map $G_C$ from \eqref{eqn:GCmap} is determined by
 \begin{equation}
     \label{eqn:GC}
 G_{\chamber}\big([\Psi(S_i)]\big) = \sum_{k\in \ZZ} (-1)^k \sum_{V} \ell_V\Big(\mathcal{H}^k\big(\Psi(S_i)\big)\Big)[\mathcal{O}_{V}]
 \end{equation}
 for $i\in Q_0$, where the second sum is over all irreducible surfaces in the support of $\mathcal{H}^k\big(\Psi(S_i))$.
\end{theorem}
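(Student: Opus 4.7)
The plan is to exploit the identification made in Section~\ref{sec:Galedualviafiltration}: under the isomorphism $\psi^*$ of Lemma~\ref{lem:NumKgroupX}, the Gale dual map $G_C$ is identified with the natural quotient $F_2/F_0 \twoheadrightarrow F_2/F_1$ along the bottom of diagram \eqref{eqn:GCdiagram}. Since $\varphi^*([S_i]) = [\Psi(S_i)]$, computing $G_C([\Psi(S_i)])$ reduces to computing the class $[\Psi(S_i)] \bmod F_1$ in $\Knumc(X)$. The whole problem therefore collapses to a computation at the top of the dimension filtration, which Proposition~\ref{prop:Knumclass} is designed to handle.

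Before invoking that proposition, I would verify that the dimension filtration on $\Knumc(X)$ terminates at $F_2$, i.e.\ that every proper subscheme of $X$ has dimension at most two. This is because $X$ is proper over the affine base $\Spec R$ via $f_\theta$, and any closed subscheme of an affine scheme that is proper over $\kk$ is zero-dimensional; hence a proper subscheme of $X$ is contained in a finite union of fibres of the birational morphism $f_\theta$, and such fibres have dimension strictly less than $\dim X = 3$.

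The main step is then to apply Proposition~\ref{prop:Knumclass} to $E = \Psi(S_i) \in D^b_c(X)$. Let $m \in \{0,1,2\}$ be the maximal dimension of an irreducible component of $\supp(\Psi(S_i))$. If $m \leq 1$, then $[\Psi(S_i)] \in F_m \subseteq F_1$, so $G_C([\Psi(S_i)]) = 0$; correspondingly, no irreducible surface appears in $\supp(\mathcal{H}^k(\Psi(S_i)))$ for any $k$, so the right-hand side of \eqref{eqn:GC} also vanishes. If $m = 2$, Proposition~\ref{prop:Knumclass} yields
\[
[\Psi(S_i)] \equiv \sum_{k\in \ZZ} (-1)^k \sum_{V} \ell_V\big(\mathcal{H}^k(\Psi(S_i))\big)[\mathcal{O}_V] \pmod{F_1},
\]
where the inner sum ranges over \emph{all} irreducible components $V$ of $\supp(\mathcal{H}^k(\Psi(S_i)))$. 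Passing to the quotient $F_2/F_1$, any component of dimension less than two contributes a class in $F_1$ that vanishes, so only the two-dimensional components (that is, the irreducible surfaces) survive, recovering \eqref{eqn:GC}.

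This argument is essentially bookkeeping once Proposition~\ref{prop:Knumclass} and Lemma~\ref{lem:CIdual} are in place. The only conceptual subtlety is that the restriction of the inner sum to surfaces in the theorem statement is not an extra hypothesis but rather what the passage from $F_2$ to $F_2/F_1$ automatically enforces on the formula supplied by Proposition~\ref{prop:Knumclass}; I do not anticipate any serious obstacle.
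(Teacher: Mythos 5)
Your proposal is correct and follows essentially the same route as the paper: identify $G_C$ with the quotient $F_2/F_0\to F_2/F_1$, bound the dimension of the (proper, hence $f_\theta$-contracted) components of $\supp(\Psi(S_i))$ by two, and apply Proposition~\ref{prop:Knumclass} modulo $F_1$. Your extra bookkeeping (the case $m\leq 1$ and the observation that restricting the inner sum to surfaces is automatic after passing to $F_2/F_1$) only makes explicit what the paper's proof leaves implicit.
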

  \begin{proof}
  For $i\in Q_0$, we have $\Psi(S_i)\in D^b_c(X)$ because  $S_i\in \Dfin(A)$. Since each irreducible component of $\supp(\Psi(S_i))$ is proper, every such component is contracted to a point by the birational morphism $f\colon X\to \Spec R$. Therefore, the maximal dimension of any irreducible component of $\supp(\Psi(S_i))$ is $m=2$. Proposition~\ref{prop:Knumclass}
  then gives the class in $F_2$ as 
  \[
 \big[\Psi(S_i)\big] =\sum_{k\in \ZZ} (-1)^k \sum_{V} \ell_V\Big(\mathcal{H}^k\big(\Psi(S_i)\big)\Big) [\mathcal{O}_V] \mod F_{1},
 \]
 where the second sum is taken over all irreducible surfaces in the support of $\mathcal{H}^k\big(\Psi(S_i))$. The result follows because $G_C$ sends an element of $F_2/F_0$ to its class in $F_2/F_1$.\end{proof}

\subsection{The sign-coherence property}
\label{sec:signcoherence}
From now on, we assume that the dimension vector $v$ has a component equal to 1. Fix once and for all a vertex $0\in Q_0$ satisfying $v_0=1$. Let $C_+\subseteq \Theta_{\QQ}$ denote the GIT chamber containing the open cone
\begin{equation}
    \label{eqn:C+}
\big\{\theta\in \Theta_{\QQ} \mid \theta_i>0\text{ for }i\neq 0\big\},
\end{equation}
and fix $X:=\mathcal{M}_\theta(A,v)$ for $\theta\in C_+$, where we choose the normalisation of the tautological bundle $T$ such that $T_0\cong \mathcal{O}_{X}$.  Since $F_0$ has rank one, Lemma~\ref{lem:NumKgroupX}\three\ gives
\begin{equation}
    \label{eqn:PsiCbasis}
F_2/F_0\cong \bigoplus_{i\neq 0} \ZZ\big[\Psi(S_i)\big].
\end{equation}
 
  The following statement was first proposed by Cautis--Logvinenko~\cite[Conjecture~1.2]{CL09} for the skew group algebra of a finite subgroup $\Gamma\subset \SL(3,\kk)$ as in Example~\ref{exa:ghilb}.
 
 \begin{conjecture}[Cautis--Logvinenko]
 \label{conj:CL}
 Let $A\cong \kk Q/I$ be a noncommutative crepant resolution of $R$ that satisfies Assumption~\ref{ass:KrullSchmidt} such that 
 $v_0=1$ for some $0\in Q$. Set $X=\mathcal{M}_\theta(A,v)$ for $\theta\in C_+$. For each vertex $i\neq 0$, there is a unique $k(i)\in \{-1,0\}$ such that $\mathcal{H}^{k(i)}(\Psi(S_i))\neq 0$.
 \end{conjecture}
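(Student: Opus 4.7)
The plan is to realise $\Psi(S_i)$ as a simple object in a perverse heart on $D^b_c(X)$ that arises as a two-term tilt of $\coh_c(X)$; once this is set up, simplicity will force the cohomology of $\Psi(S_i)$ to concentrate in a single degree lying in $\{-1,0\}$. The restriction of the equivalence from Theorem~\ref{thm:BKR}\three\ identifies $\Dfin(A)$ with $D^b_c(X)$, so the standard heart $\mathcal{A}$ of finite-dimensional $A$-modules in $\Dfin(A)$ transports to the heart $\mathcal{P}:=\Psi(\mathcal{A})$ of a bounded t-structure on $D^b_c(X)$. Since each vertex simple $S_i$ is simple in $\mathcal{A}$, the object $\Psi(S_i)$ is simple in $\mathcal{P}$.

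The key step is to exhibit $\mathcal{P}$ as a Happel--Reiten--Smal\o{} tilt of $\coh_c(X)$ at a torsion pair $(\mathcal{T},\mathcal{F})$, so that $\mathcal{P}=\langle\mathcal{F}[1],\mathcal{T}\rangle$ and in particular every $E\in\mathcal{P}$ satisfies $\mathcal{H}^k(E)=0$ for $k\notin\{-1,0\}$. Granted this, any simple $E\in\mathcal{P}$ sits in a short exact sequence
\[
0\longrightarrow\mathcal{H}^{-1}(E)[1]\longrightarrow E\longrightarrow\mathcal{H}^0(E)\longrightarrow 0
\]
in $\mathcal{P}$, and simplicity of $E$ forces exactly one of the two factors to vanish. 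Applying this to $E=\Psi(S_i)$ immediately yields the unique $k(i)\in\{-1,0\}$. The chamber $C_+$ and the normalisation $T_0\cong\mathcal{O}_X$ enter the argument through the construction of $(\mathcal{T},\mathcal{F})$: the 0-generated stability condition singles out $\mathcal{O}_X$ as a distinguished generator, so one expects $\mathcal{T}$ to be generated by $\mathcal{O}_X$ together with sheaves of zero-dimensional support, while $\mathcal{F}$ consists of sheaves that are numerically negative with respect to $C_+$. One concrete route to verifying the truncation $\mathcal{P}\subseteq D^{[-1,0]}_c(X)$ is to apply $\Psi=T^\vee\ltensor_A(-)$ term-by-term to a minimal projective resolution $\cdots\to P_{a_1}\to P_{a_0}\to S_i\to 0$ of length at most $\operatorname{gldim}(A)=3$ and then analyse when the resulting complex of tautological bundles has trivial middle cohomology on $X$.

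The hard part is precisely this first step. In the toric case the two-term tilt property can be read off from existing work on perverse coherent sheaves associated with consistent dimer models, but for a general NCCR satisfying Assumption~\ref{ass:KrullSchmidt} the heart $\mathcal{P}$ is a priori contained only in $D^{[-3,0]}_c(X)$, and ruling out contributions to $\mathcal{H}^{-2}$ and $\mathcal{H}^{-3}$ of $\Psi(S_i)$ requires a genuinely chamber-dependent argument rather than one that works for an arbitrary GIT chamber. The exclusion $i\neq 0$ in the statement reflects the symmetry-breaking role of the normalisation $\Psi(P_0)=\mathcal{O}_X$: because $S_0$ is a quotient of $P_0$ and $\Psi(P_0)$ is the structure sheaf of the entire space, the object $\Psi(S_0)$ inherits a distinguished contribution that the other $\Psi(S_i)$ do not, and one should not expect a two-term concentration at vertex $0$. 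Establishing this first step, and ultimately strengthening the conclusion to predict $k(i)$ from the supporting hyperplanes of $C_+$ as in Conjecture~\ref{conj:CLintro}\two, are the key open issues.
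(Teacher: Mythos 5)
There is a genuine gap, and in fact two distinct problems. First, the statement you are addressing is a \emph{conjecture}: the paper does not prove it, but only records the cases where it is known (finite abelian subgroups of $\SL(3,\kk)$ and CY-3 toric orders via \cite{CL09,BCQ15}, and the fibre-dimension-one situation of Example~\ref{exa:HomMMP}) and offers new evidence. Your proposal likewise does not prove it — you explicitly leave open the ``first step'' — so at best it is a strategy sketch, and it should not be presented as a proof of the statement.

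Second, and more seriously, the central mechanism you propose would fail. You want to exhibit the heart $\mathcal{P}=\Psi(\modA)$ (restricted to finite-dimensional modules, i.e.\ the image of $\Dfin(A)$ in $D^b_c(X)$) as a single Happel--Reiten--Smal\o{} tilt of $\coh_c(X)$, so that $\mathcal{P}\subseteq D^{[-1,0]}_c(X)$, and then invoke simplicity of $\Psi(S_i)$ in $\mathcal{P}$. But the paper's remark immediately after Conjecture~\ref{conj:CL} points out that this containment is false in general: for a CY-3 toric order whose resolution $X$ contains a proper surface one has $\mathcal{H}^{-2}(\Psi(S_0))\neq 0$ by \cite[Corollary~3.8]{BCQ15}, so the heart is \emph{not} a one-step tilt of $\coh(X)$ — at best it is a two-step tilt in the abelian skew group algebra case \cite{BrownShipman17}. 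Since $\Psi(S_0)$ lies in $\mathcal{P}$, no torsion pair $(\mathcal{T},\mathcal{F})$ in $\coh_c(X)$ can yield $\mathcal{P}=\langle\mathcal{F}[1],\mathcal{T}\rangle$, and the truncation $\mathcal{P}\subseteq D^{[-1,0]}_c(X)$ you need cannot hold; this is precisely why the exclusion $i\neq 0$ is essential (\cite{CL14}) and why the conjecture cannot be reduced to the formal ``simple object in a tilted heart'' argument. Any viable approach must isolate the simples $\Psi(S_i)$ for $i\neq 0$ by a genuinely chamber-dependent analysis (as in the explicit toric computations of \cite{CL09,BCQ15}), rather than by a structural statement about the whole heart. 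Your final observation that the simplicity argument yields the dichotomy once two-term concentration is known is correct, but that concentration is exactly the content of the conjecture, not a reduction of it.
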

 
 \begin{remark}
 The heart $\mathcal{A}$ obtained as the essential image of $\modA$ under $\Psi$ is not simply a one-step tilt of $\coh(X)$, because $\mathcal{H}^{-2}(\Psi(S_0))\neq 0$ when $A$ is a CY-3 toric order and $X$ contains a proper surface; see \cite[Corollary~3.8]{BCQ15}. It is worth noting, however, that $\mathcal{A}$ is obtained via a two-step tilt from $\coh(X)$ when $A$ is the skew group algebra of a finite subgroup of $\SL(3,\kk)$ as in Example~\ref{exa:ghilb}, see Brown--Shipman~\cite{BrownShipman17}.
 \end{remark}
 
  This conjecture holds for the skew group algebra of any finite abelian subgroup $\Gamma\subset \SL(3,\kk)$ by~\cite[Lemma~3.1]{CL09} (compare \cite[Remark~3.4]{CQ15}), and was later generalised to any CY-3 toric order $A$ as in Example~\ref{exa:dimers} by \cite[Theorem~1.1\one]{BCQ15}. The evidence for non-toric algebras is more sparse, but we can say the following.

 \begin{example}
 \label{exa:HomMMP}
 Let $R$ denote a complete local normal Gorenstein ring for which a projective crepant birational morphism $f\colon X\to \Spec R$ exists with fibres of dimension at most one. Let $C_1, \dots, C_n$ denote the irreducible components (with reduced scheme structure) of the preimage of the unique closed point of $\Spec R$.  Van den Bergh~\cite{VdB04} constructs a tilting bundle $\mathcal{V}_X:=\mathcal{O}_X\oplus \bigoplus_{1\leq i\leq n} \mathcal{N}_i$ on $X$ such that for $A:=\End(\mathcal{V}_X^\vee) = \End (\mathcal{V}_X)^{\text{op}}$, the functor
  \[
     \Psi(-):= \mathcal{V}_X\ltensor_A(-) \colon D^b(A)\longrightarrow D^b(X)
     \]
     is a derived equivalence that sends the simple $A$-module $S_i$ to the sheaf $\mathcal{O}_{C_i}(-1)$. Note in particular that $\mathcal{H}^k(\Psi(S_i)) \neq 0$ only for $k=0$ as $\mathcal{O}_{\mathbb{P}^1}(-1)$ is a sheaf. For $v$ satisfying $v_0=1$ and $v_i=\rank(\mathcal{N}_i)$ for $i\neq 0$, Karmazyn~\cite[Corollary~5.2.4]{Karmazyn17} constructed an isomorphism $X\cong\mathcal{M}_\theta(A,v)$ for $\theta\in C_+$ that identifies the tautological bundle with $\mathcal{V}_X^{\vee}$. Thus, $\Psi$ agrees with \eqref{eqn:derivedequivalenceadjoint}. If $X$ is non-singular, then $A$ is an NCCR. We noted above that property~\ref{property} holds, so Conjecture~\ref{conj:CL} holds in this case; see Section~\ref{sec:MMA} for the case when $X$ is singular.
   \end{example}

 Much of the power of Conjecture~\ref{conj:CL} lies in the implicit vanishing statement which forces the first sum from \eqref{eqn:GCmain} to collapse. This leads immediately to the following result.
 
 \begin{proposition}
 \label{prop:GC}
 Assume that Conjecture~\ref{conj:CL} holds. Then for each $i\in Q_0\setminus \{0\}$, we have  \begin{equation}
     \label{eqn:GCmain}
 G_{\chamber}\big([\Psi(S_i)]\big) = (-1)^{k(i)} \sum_{V} \ell_V\Big(\mathcal{H}^{k(i)}\big(\Psi(S_i)\big)\Big)[\mathcal{O}_{V}]
 \end{equation}
 where the sum runs over all irreducible surfaces in the support of $\mathcal{H}^{k(i)}\big(\Psi(S_i)\big)$.
\end{proposition}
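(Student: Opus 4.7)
The plan is to derive this as an immediate consequence of Theorem~\ref{thm:GC} once the Cautis--Logvinenko vanishing is inserted. Since $v$ is indivisible, $v_0 = 1$, and we work with the specific chamber $C_+$, the hypotheses of Theorem~\ref{thm:GC} are in force, so the identity
\[
G_{C_+}\big([\Psi(S_i)]\big) = \sum_{k\in \ZZ} (-1)^k \sum_{V} \ell_V\Big(\mathcal{H}^k\big(\Psi(S_i)\big)\Big)[\mathcal{O}_{V}]
\]
holds verbatim, where the inner sum runs over irreducible surfaces in the support of $\mathcal{H}^k(\Psi(S_i))$.

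The only remaining content is to observe that, assuming Conjecture~\ref{conj:CL}, the outer sum over $k\in\ZZ$ has exactly one nonzero term. Indeed, for each fixed $i\in Q_0\setminus\{0\}$, the conjecture asserts the existence of a unique integer $k(i)\in\{-1,0\}$ such that $\mathcal{H}^{k(i)}(\Psi(S_i))\neq 0$; all other cohomology sheaves $\mathcal{H}^k(\Psi(S_i))$ vanish, and so their lengths $\ell_V(\mathcal{H}^k(\Psi(S_i)))$ at the generic point of any irreducible surface $V$ are zero. The outer sum therefore collapses to the single term with $k=k(i)$, yielding \eqref{eqn:GCmain}.

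There is essentially no obstacle in this step, as the statement is a direct corollary: the real mathematical work was done in Theorem~\ref{thm:GC}, and the collapse of the sum is a formal consequence of the hypothesised vanishing. The only minor point worth emphasising in the write-up is that the restriction to $i\neq 0$ is needed because Conjecture~\ref{conj:CL} explicitly excludes $i=0$ (the vertex simple $S_0$ can have nontrivial cohomology in degree $-2$ when $X$ contains a proper surface, as highlighted in the remark following the conjecture), and this matches the indexing in the statement of Proposition~\ref{prop:GC}.
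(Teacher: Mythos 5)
Your proposal is correct and is exactly the paper's argument: the paper states that the vanishing in Conjecture~\ref{conj:CL} "forces the first sum from \eqref{eqn:GCmain} to collapse," so the proposition follows immediately from Theorem~\ref{thm:GC} by keeping only the $k=k(i)$ term, just as you wrote. Your remark that $i\neq 0$ is needed because the conjecture excludes the vertex $0$ is also consistent with the paper.
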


A matrix is \emph{sign-coherent} if for each column vector, the components are all non-negative or all non-positive (compare Fomin--Zelevinsky~\cite[Definition~6.12]{FZ07}).  To define the matrix of interest,  Corollary~\ref{cor:GensFm} implies that the irreducible components $E_1, \dots, E_\ell$ of the exceptional divisor of $f_\theta\colon X\to \Spec R$ define classes $[\mathcal{O}_{E_1}], \dots , [\mathcal{O}_{E_\ell}]$  generating $F_2/F_1$ over $\ZZ$.

 \begin{corollary}[\textbf{Sign-coherence}]
 \label{cor:GC}
 Assume that Conjecture~\ref{conj:CL} holds, and suppose in addition that the classes $[\mathcal{O}_{E_1}], \dots , [\mathcal{O}_{E_\ell}]$ 
 provide a $\ZZ$-basis of $F_2/F_1$. Then  $G_C\colon F_2/F_0\to F_2/F_1$ is represented by a sign-coherent matrix.
 \end{corollary}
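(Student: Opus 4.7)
The plan is to write down the matrix of $G_C$ in the two natural bases provided by the hypotheses, and observe that its columns are sign-coherent by direct inspection.

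The setup is quick: by \eqref{eqn:PsiCbasis} the classes $\{[\Psi(S_i)] : i \in Q_0 \setminus \{0\}\}$ form a $\ZZ$-basis of $F_2/F_0$, and by assumption $\{[\mathcal{O}_{E_1}], \dots, [\mathcal{O}_{E_\ell}]\}$ is a $\ZZ$-basis of $F_2/F_1$. The $(j,i)$-entry of the matrix representing $G_C$ in these bases is thus the coefficient of $[\mathcal{O}_{E_j}]$ in the expansion of $G_C([\Psi(S_i)])$, and it suffices to show that for each fixed $i$ all such coefficients share a common sign (or are zero).

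The substantive input is Proposition~\ref{prop:GC}: under Conjecture~\ref{conj:CL} the alternating sum in \eqref{eqn:GC} collapses to the single term indexed by $k = k(i) \in \{-1, 0\}$, giving
\[
G_C\bigl([\Psi(S_i)]\bigr) = (-1)^{k(i)} \sum_V \ell_V\bigl(\mathcal{H}^{k(i)}(\Psi(S_i))\bigr)[\mathcal{O}_V],
\]
where $V$ ranges over irreducible surfaces in $\supp(\mathcal{H}^{k(i)}(\Psi(S_i)))$. The sign $(-1)^{k(i)}$ depends only on $i$ and not on $V$, and each length $\ell_V(\cdots)$ is a non-negative integer; so sign-coherence of column $i$ follows as soon as I know that every such $V$ is one of the $E_j$.

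This identification is the one geometric step to verify, and I expect it to be the main (if mild) obstacle. Since $\Psi(S_i) \in D^b_c(X)$, every irreducible component of its support is proper in $X$. Because $f_\theta\colon X \to \Spec R$ is birational with affine target, any proper positive-dimensional closed subscheme of $X$ is contracted to a point by $f_\theta$ and therefore lies in the exceptional locus; a proper irreducible surface in $X$ is consequently a two-dimensional (i.e.\ divisorial) component of the exceptional locus, hence one of $E_1, \dots, E_\ell$. Re-expressing the sum above in the basis $\{[\mathcal{O}_{E_j}]\}$ then shows that the $(j,i)$-entry of the matrix equals $(-1)^{k(i)}\, \ell_{E_j}\bigl(\mathcal{H}^{k(i)}(\Psi(S_i))\bigr)$, which lies in $(-1)^{k(i)}\,\ZZ_{\geq 0}$. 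Every entry of the $i$-th column therefore has sign $(-1)^{k(i)}$ or is zero, proving sign-coherence.
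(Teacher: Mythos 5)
Your proposal is correct and follows essentially the same route as the paper: expand $G_C$ in the basis $\{[\Psi(S_i)]\mid i\neq 0\}$ of $F_2/F_0$ and the given basis of $F_2/F_1$, and read off sign-coherence from the collapsed formula \eqref{eqn:GCmain} of Proposition~\ref{prop:GC}, since the common sign $(-1)^{k(i)}$ multiplies non-negative lengths. Your extra check that every proper irreducible surface in $\supp(\mathcal{H}^{k(i)}(\Psi(S_i)))$ is one of the $E_j$ (via contraction by $f_\theta$ over the affine base) is left implicit in the paper but is a welcome, correct addition.
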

 \begin{proof}
 Consider the $\ZZ$-basis $\{[\Psi(S_i)] \mid i\neq 0\}$ of $F_2/F_0$ from \eqref{eqn:PsiCbasis} and the given basis of $F_2/F_1$. Then \eqref{eqn:GCmain} shows that the components of the $i^{th}$ column of the matrix representing $G_C$ in these bases are all non-negative if $k(i)$ is even, and non-positive if $k(i)$ is odd. 
 \end{proof}
 
 We illustrate this attractive phenomenon in Examples~\ref{exa:1315},  \ref{exa:longhex} and \ref{exa:BentoBox}.

 \subsection{Maximal Modification Algebras}
 \label{sec:MMA}
 We established that Conjecture~\ref{conj:CL} holds for the algebra $A$ from Example~\ref{exa:HomMMPcont} in the special case when the quiver moduli space $X=\mathcal{M}_\theta(A,v)$ for $\theta\in C_+$ is non-singular. When $X$ is singular, $A$ is not an NCCR \cite[Theorem~1.5(2)]{IyamaWemyss14b} and therefore we cannot claim to have proven Conjecture~\ref{conj:CL}, even though Property~\ref{property} holds.
 
 To address this problem, recall from Iyama--Wemyss~\cite{IyamaWemyss14} that a reflexive $R$-module $M$ is called a  \emph{modifying module} if $\End_R(M)$ is Cohen--Macaulay as an $R$-module, and $M$ is a \emph{maximal modifying module} if it is maximal with this property. A \emph{maximal modification algebra} (MMA) is any algebra of the form $\End_R(M)$, where $M$ is a maximal modifying module. If $R$ admits an NCCR, then every NCCR is an MMA \cite[Proposition~4.5]{IyamaWemyss14}. More generally,  \cite[Theorem~4.16]{IyamaWemyss14b} makes a compelling case that in order to probe the geometry of threefold minimal models that are derived equivalent to an algebra $A$, one should work with MMAs rather than NCCRs whenever possible.

 We expect that
 Theorem~\ref{thm:GCintro} and Conjecture~\ref{conj:CLintro} extend to any MMA  satisfying Assumption~\ref{ass:KrullSchmidt}. However, we chose in the end to formulate Theorem~\ref{thm:GCintro} and Conjecture~\ref{conj:CLintro} only for an NCCR simply because our understanding of the quiver moduli spaces $\mathcal{M}_\theta(A,v)$ is currently too limited when $A$ does not have finite global dimension. The only exception known to the author is Example~\ref{exa:HomMMP}, where $G_C$ is the zero map (simply because $F_2/F_1=0$) and Property~\ref{property} is already known to hold. Nevertheless, for completeness we remark that, if one is prepared to live with a number of strong assumptions about $\mathcal{M}_\theta(A,v)$ and its tautological bundle, then the analogue of Theorem~\ref{thm:GCintro} for an MMA can be stated as follows.

 \begin{proposition}
 \label{prop:GCMMA}
 Let $R$ be a normal, Gorenstein finitely generated $\kk$-algebra of dimension three and let $A$ be an MMA of $R$ satisfying Assumption~\ref{ass:KrullSchmidt} such that $v$ is indivisible. Let $C\subset \Theta\otimes_{\ZZ} \QQ$ be a GIT chamber such that for $\theta\in C$, the following two conditions are satisfied:
\begin{enumerate}
    \item[\one] there is a projective birational morphism $f_\theta\colon \mathcal{M}_\theta(A,v)\to \Spec R$; and
    \item[\two] the tautological bundle $T$ on $\mathcal{M}_\theta(A,v)$ is a tilting bundle, giving a derived equivalence
\[
\Psi:=T^\vee\otimes_A (-) \colon D^b(A)\longrightarrow D^b\big(\mathcal{M}_\theta(A,v)\big). 
\]
\end{enumerate}
 Then the map $G_C\colon F_2/F_0\to F_2/F_1$ that is Gale dual to the linearisation map $L_C$ is determined by
 \begin{equation}
     \label{eqn:GCMMA}
 G_{\chamber}\big([\Psi(S_i)]\big) = \sum_{k\in \ZZ} (-1)^k \sum_{V} \ell_V\Big(\mathcal{H}^k\big(\Psi(S_i)\big)\Big)[\mathcal{O}_{V}]
 \end{equation}
 for $i\in Q_0$, where the second sum is over all irreducible surfaces in the support of $\mathcal{H}^k\big(\Psi(S_i))$.
\end{proposition}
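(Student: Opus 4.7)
The plan is to follow the proof of Theorem~\ref{thm:GC} step by step, using hypotheses \one\ and \two\ as substitutes for the parts of Theorem~\ref{thm:BKR} that are not automatic for an MMA. First I would check that the geometric framework of Section~\ref{sec:numGrothGroups} applies: the scheme $X:=\mathcal{M}_\theta(A,v)$ is separated and of finite type over $\kk$ by construction, proper over $\Spec R$ by \one, and connected because the tilting bundle $T=\bigoplus T_i$ has $\End(T)\cong A$ with $A$ connected under Assumption~\ref{ass:KrullSchmidt}. Hence the dimension filtration $F_0\subseteq F_1\subseteq F_2=\Knumc(X)$, the kernel filtration $F^2\subseteq F^1$, and the short exact sequence of Proposition~\ref{prop:Fmperp}\three\ are all available.

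Next I would reproduce the analogues of Lemma~\ref{lem:NumKgroupX} and Corollary~\ref{cor:tilting}. The tilting equivalence from \two\ restricts to $\Psi\colon \Dfin(A)\to D^{b}_{c}(X)$: the argument of \cite[Theorem~7.2.1]{BCZ17} uses only the tilting equivalence together with properness of $X$ over an affine base, not finite global dimension of $A$, so it carries over unchanged. This yields the isomorphism $\psi^*\colon \Knum(A)\to F_2$ satisfying $(\psi^*)^{-1}([S_i])=[\Psi(S_i)]$; the fibre computation \eqref{eqn:derivedOy} gives $\psi^*([\mathcal{O}_x])=v$, so that $\ZZ v$ is identified with $F_0$; and surjectivity of $L_C$ follows as in Corollary~\ref{cor:tilting}. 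With these ingredients in hand, commutativity of the diagram~\eqref{eqn:LCdiagram} reduces to the direct computation $\det\big(\psi(\eta)\big)=L_C(\eta)^{-1}$, and the identifications of $\Theta$ with $F^1$ and of $\ker(L_C)$ with $F^2$ proceed as in Lemma~\ref{lem:CIrevisited}. Dualising yields~\eqref{eqn:GCdiagram}, which identifies the Gale dual $G_C$ with the natural projection $F_2/F_0\to F_2/F_1$.

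Finally, applying Proposition~\ref{prop:Knumclass} to $\Psi(S_i)\in D^{b}_{c}(X)$ gives \eqref{eqn:GCMMA}: hypothesis \one\ implies that each irreducible component of $\supp(\Psi(S_i))$ is contained in a fibre of $f_\theta$ over a closed point of the three-dimensional base $\Spec R$, and hence has dimension at most two, so only irreducible surfaces contribute to the mod-$F_1$ reduction. The main obstacle is the careful handling of the Euler pairing on the algebraic side. When $A$ has infinite global dimension, $\chi_A$ need not be well-defined on all of $K(A)\times \Knum(A)$, so the adjoint property of Remark~\ref{rem:adjoint} cannot be invoked verbatim; instead I would transport the pairing from the geometric side via $\psi$ and $\psi^*$, which is all that Lemmas~\ref{lem:CIrevisited} and~\ref{lem:CIdual} actually require. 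A related minor point is that $\Theta$, originally defined via $\chi_A$, must be interpreted through the derived equivalence rather than by direct computation on $A$; this is a matter of definition that does not affect the substance of the argument.
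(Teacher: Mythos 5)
Your proposal is correct and takes essentially the same route as the paper's own sketch proof: both run the argument of Theorem~\ref{thm:GC} with hypotheses \one\ and \two\ standing in for Theorem~\ref{thm:BKR}, and both circumvent the failure of finite global dimension at the same point --- the paper by replacing $K(A)$ throughout by the subgroup generated by the projectives $P_i$, you by transporting the Euler pairing and the definition of $\Theta$ through $\psi$, which amounts to the same restriction since only classes of projectives (mapped to the perfect complexes $T_i^\vee$) are ever paired. If anything, your account is more detailed than the paper's sketch, which adds only the remark that $f_\theta$ is crepant by Iyama--Wemyss (not needed for the formula itself).
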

\begin{proof}[Sketch proof]
 The morphism $f_\theta$ is crepant by \cite[Theorem~1.5(1)]{IyamaWemyss14b} because $\mathcal{M}_\theta(A,v)$ is derived equivalent to $A$. The assumptions allow us to follow the same proof as for Theorem~\ref{thm:GC}, except in that one has to circumvent our use of the finite global dimension of $A$ (see Lemma~\ref{lem:NumKgroupNCCR}) by replacing $K(A)$ throughout by the subgroup of $K(A)$ generated by projective $A$-modules (one also loses non-singularity in Theorem~\ref{thm:BKR}\two, but that doesn't play a role in the proof of Theorem~\ref{thm:GCintro}). 
\end{proof}

 \begin{example}
 \label{exa:HomMMPcontcont}
 Continuing Example~\ref{exa:HomMMP}, if $X$ is singular, then $A$ does not have finite global dimension and so it is not an NCCR. However, $f\colon X=\mathcal{M}_\theta(A,v)\to \Spec R$ is a projective, crepant birational morphism, so $A$ is an MMA by \cite[Theorem~1.5(1)]{IyamaWemyss14b}. The assumptions of Proposition~\ref{prop:GCMMA} hold \cite{Karmazyn17} (though we already know that $G_C$ is the zero map). We noted above that Property~\ref{property} also holds, hence so does the natural analogue of Conjecture~\ref{conj:CL} for $A$. Put simply, the assumption that $X$ is non-singular appearing towards the end of Example~\ref{exa:HomMMP} is a red herring.
  \end{example}

\section{Comparing rival versions of Reid's recipe}
 We now recall Reid's recipe and the geometric interpretation of Reid~\cite{Reid97}, as well as the alternative geometric interpretation due to Cautis--Logvinenko~\cite{CL09}. As an application of Corollary~\ref{cor:GC}, we establish that these rival geometric interpretations are equivalent. To conclude, we establish that two conjectures in the general toric case are equivalent.
 
 \subsection{Decorating the toric fan}
  Suppose that $R=\kk[x,y,z]^\Gamma$ for a finite abelian subgroup $\Gamma\subset \SL(3,\kk)$, in which case the skew group algebra $A:= \kk[x,y,z]\star \Gamma$ is a $CY$-$3$ toric order, i.e.\ we are working in the situation of Examples~\ref{exa:ghilb} and~\ref{exa:dimers} simultaneously. The McKay quiver with relations gives a presentation $A\cong \kk Q/I$ satisfying Assumption~\ref{ass:KrullSchmidt}, and we let $0\in Q_0$ denote the vertex corresponding to the trivial representation of $\Gamma$. 
 For $v=(1,\dots,1)$ and for the chamber $C_+\subset \Theta_{\QQ}$ containing the open cone \eqref{eqn:C+}, the projective crepant resolution 
 \[
 f\colon X=\mathcal{M}_\theta(A,v)\longrightarrow \mathbb{A}^3/\Gamma=\Spec R
 \]
 from Theorem~\ref{thm:BKR} is the $\Gamma$-Hilbert scheme of Nakamura~\cite{Nakamura01}. The summands of the tautological bundle $T=\bigoplus_{0\leq i\leq 18} T_i$ are globally generated line bundles with $T_0\cong \mathcal{O}_X$. 
 
Reid's combinatorial recipe for marking cones of the fan of the toric variety $X=\mathcal{M}_\theta(A,v)$ with vertices of the quiver $Q$ is summarised in \cite[Section~2.3]{CCL17}. Rather than describe the recipe in detail here, we refer the reader to the enlightening example of $\frac{1}{19}(1,3,15)$ in the Introduction.
 
\subsection{Reconciling rival geometric interpretations}
The original, geometric construction that motivated Reid~\cite{Reid97} to introduce his recipe can be summarised as follows:

 \begin{definition}[\textbf{Interpreting Reid's recipe - version 1}]
 \label{def:RR1}
 The combinatorics of Reid's recipe encodes the following geometric information: 
 \begin{enumerate}
    \item[\one] a $\ZZ$-basis of $\NS(X)$ given by a subset of the tautological bundles; and
    \item[\two] a set of minimal relations in $\NS(X)$ between all of the tautological bundles.
\end{enumerate}
Full details of the choice of basis and the construction of the relations appear in \cite{Craw05}. 
\end{definition}

\begin{proposition}
\label{prop:RR1}
The geometric interpretation of Reid's recipe given in Definition~\ref{def:RR1} is encoded by matrices that represent the maps in the short exact sequence
 \begin{equation}
 \begin{tikzcd}
0 \ar[r] & \ker(L_{C_+})\ar[r,"K"] & \Theta\ar[r,"L"]&  \NS(X) \ar[r]&  0
\end{tikzcd}
  \end{equation}
  appearing in diagram \eqref{eqn:LCdiagram}.
\end{proposition}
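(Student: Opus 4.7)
The plan is to observe that the proposition is essentially a dictionary: the two bullets of Definition~\ref{def:RR1} are exactly the information needed to write down matrices for the two nontrivial maps in the short exact sequence. Recall from \eqref{eqn:LC} that $L_{C_+}([P_i]) = \det(T_i)$, and since $v=(1,\dots,1)$ in the setting under consideration, each tautological sheaf $T_i$ is already a line bundle, so $\det(T_i)=T_i$. Thus the image of $L_{C_+}$ is the subgroup of $\NS(X)$ generated by the classes of all the tautological line bundles, and Corollary~\ref{cor:tilting} ensures that $L_{C_+}$ is in fact surjective.

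First, I would address Definition~\ref{def:RR1}\one. To represent $L_{C_+}$ as a matrix, one chooses the natural $\ZZ$-basis $\{[P_i] \mid i\neq 0\}$ of $\Theta$ together with a $\ZZ$-basis of $\NS(X)$. The interpretation of Reid's recipe in \cite{Reid97, Craw05} produces a distinguished $\ZZ$-basis of $\NS(X)$ consisting of a subset of the tautological line bundles $T_i$. With respect to these bases, the columns of the matrix $L$ express each $T_i$ in terms of the chosen basis, recording precisely the data of Definition~\ref{def:RR1}\one\ together with the value of $L_{C_+}$ on every basis element of $\Theta$.

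Next I would handle Definition~\ref{def:RR1}\two. Any column vector $(n_i)_{i\neq 0}$ lying in $\ker(L_{C_+})$ corresponds to a relation $\bigotimes_{i\neq 0} T_i^{\otimes n_i} \cong \mathcal{O}_X$ in $\NS(X)$, and a matrix $K$ representing the inclusion $\ker(L_{C_+})\hookrightarrow \Theta$ amounts precisely to the choice of a minimal generating set of such relations. By \cite[Theorem~6.1]{Craw05}, the combinatorial recipe writes down exactly such a minimal generating set of relations between all of the tautological bundles, matching Definition~\ref{def:RR1}\two\ on the nose.

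Combining these two steps, the data extracted by Reid's recipe is precisely the data of a pair of matrices for $L$ and $K$, with surjectivity of $L$ given by Corollary~\ref{cor:tilting} and exactness at $\Theta$ holding by construction of $K$. The only non-formal point, rather than a true obstacle, is the appeal to \cite[Theorem~6.1]{Craw05} to guarantee that the relations produced by the recipe are genuinely minimal and generate all of $\ker(L_{C_+})$; once this is granted, the proposition reduces to a straightforward unpacking of definitions.
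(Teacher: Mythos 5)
Your proposal is correct and follows essentially the same route as the paper: choose the basis $\{[P_i]\mid i\neq 0\}$ of $\Theta$, use Corollary~\ref{cor:tilting} plus the Reid's recipe choice of a tautological-bundle basis of $\NS(X)$ to get the matrix $L$, and use the recipe's minimal generating set of relations (cited from \cite{Craw05}) to get $K$ representing the inclusion of $\ker(L_{C_+})$. The only cosmetic difference is which result of \cite{Craw05} is cited for the basis and the minimality of the relations (the paper uses Propositions~6.3 and 7.1, you use Theorem~6.1), which carries the same content.
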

\begin{proof}
 Since $v_0=1$, choose $\{[P_i]\mid i\neq 0\}$ as a $\ZZ$-basis of  $\Theta$. Corollary~\ref{cor:tilting} shows that the bundles $T_i=L_{C_+}([P_i])$ for $i\neq 0$ generate $\NS(X)$ over $\ZZ$. Reid's recipe \cite[Proposition~7.1]{Craw05} chooses the $\ZZ$-basis for $\NS(X)$ comprising the bundles $T_i$ indexed by vertices marking line segments in $\Sigma$, as well as a one additional $T_j$ for each lattice point in $\Sigma$ lying at the intersection point of three straight lines (every such lattice point is marked with two vertices; pick $j$ to be either one). For $i\neq 0$, the $i^{th}$ column of $L$ records the coefficients of $T_i$ in this basis. 
 
 To define the matrix $K$, note that the recipe associates a minimal relation between the tautological bundles for each interior lattice point in $\Sigma$, where the indices in each relation are those vertices marking the lattice point itself and lines passing through the lattice point. List the interior lattice points, and for the $j^{th}$ such point, the $j^{th}$ column of $K$ records the exponents of the bundles $T_i$ appearing in the relation (after clearing all line bundles from the right-hand side of the equation, thereby introducing some negative exponents on the left). The relations are minimal in $\NS(X)$ by \cite[Proposition~6.3]{Craw05}, so $K$ represents the inclusion of $\ker(L_{C_+})$ into $\Theta$ as required.
\end{proof}

 A rival geometric interpretation of Reid's combinatorial recipe was established for an isolated singularity in the work of  Cautis--Logvinenko~\cite{CL09} and Logvinenko~\cite{Logvinenko10}, and subsequently by Cautis--Craw--Logvinenko~\cite{CCL17} for any finite abelian subgroup of $\SL(3,\kk)$.
 
 \begin{definition}[\textbf{Interpreting Reid's recipe - version 2}]
 \label{def:RR2}
 The combinatorics of Reid's recipe encodes the following geometric information: 
 \begin{enumerate}
     \item[\one] for each $i\neq 0$ such that each component of $\supp \Psi(S_i)$ has dimension two, either:
     \begin{itemize}
         \item $k(i)=-1$, and $\supp \Psi(S_i)$ is the union of surfaces defined by lattice points through which line segments in $\Sigma$ labelled $i$ pass; or
         \item $k(i)=0$, and $\supp \Psi(S_i)$ is the surface defined by the lattice point labelled $i$.
     \end{itemize}
       \item[\two]  otherwise, for $i\neq 0$ we have $k(i)=0$, and $\supp \Psi(S_i)$ is the curve defined by the unique line segment marked $i$.
 \end{enumerate}
 
 \end{definition}

\begin{remark}
\label{rem:DRR}
 Logvinenko~\cite{Logvinenko10} provides a slightly more precise interpretation than Definition~\ref{def:RR2}, in that he computes explicitly the objects $\Psi(S_i)$ when $k(i)=0$. This was strengthened further in~\cite{CCL17} to `\emph{Derived Reids recipe}' for a finite abelian subgroup $\Gamma\subset \SL(3,\kk)$, where the objects $\Psi(S_i)$ were computed explicitly for all $i\in Q_0$. 
\end{remark}

 It turns out that the data from Definition~\ref{def:RR2} can also be encoded efficiently by two matrices, each obtained as the transpose of one of the matrices from Proposition~\ref{prop:RR1}.
 
\begin{proposition}
\label{prop:RR2}
The geometric interpretation of Reid's recipe given in Definition~\ref{def:RR2} is encoded by the matrices $K^t$ and $L^t$ that represent the maps in the short exact sequence
 \begin{equation}
 \label{eqn:KtLTseq}
 \begin{tikzcd}
0 & F_2/F_1\ar[l] & F_2/F_0\ar[l,swap,"K^t"] &  F_1/F_0 \ar[l,swap,"L^t"]&  0\ar[l]
\end{tikzcd}
  \end{equation}
  appearing in the diagram \eqref{eqn:GCdiagram}.
\end{proposition}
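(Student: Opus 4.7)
The plan is to match entries of $K^t$ and $L^t$ with the support data in Definition~\ref{def:RR2}, using Theorem~\ref{thm:GC} (in the collapsed form of Proposition~\ref{prop:GC}, which is available in the toric case since Property~\ref{property} holds by \cite[Theorem~1.1]{BCQ15}) for the map $G_{C_+}$, and Corollary~\ref{cor:NS} together with the explicit construction in Proposition~\ref{prop:RR1} for the map $L^t$.

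For the matrix $K^t$, recall that in the toric setting the irreducible components $E_1, \dots, E_\ell$ of the exceptional divisor of $f_\theta \colon X \to \Spec R$ correspond bijectively to interior lattice points of $\Sigma$, and by Corollary~\ref{cor:GensFm} their classes $[\mathcal{O}_{E_j}]$ form a $\ZZ$-basis of $F_2/F_1$. In this basis (combined with the basis $\{[\Psi(S_i)] \mid i \neq 0\}$ of $F_2/F_0$), Lemma~\ref{lem:CIdual} and the definition of Gale duality identify $K^t$ with $G_{C_+}$. Proposition~\ref{prop:GC} then shows that the column of $K^t$ indexed by a vertex $i$ records the multiplicities $(-1)^{k(i)} \ell_{E_j}\bigl(\mathcal{H}^{k(i)}(\Psi(S_i))\bigr)$. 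By Proposition~\ref{prop:RR1} and \cite[Theorem~6.1]{Craw05}, the non-zero entries in the $j$-th row of $K^t$ correspond precisely to those vertices $i$ appearing in the minimal relation at the interior lattice point $p_j$: vertices labelling $p_j$ itself contribute positive entries, while vertices labelling a line segment through $p_j$ contribute negative entries. Reading by columns and invoking sign-coherence (Corollary~\ref{cor:GC}), the case $k(i)=0$ gives a unique positive entry at the lattice point labelled $i$, and the case $k(i)=-1$ gives negative entries exactly at those lattice points through which the line segments labelled $i$ pass; this matches Definition~\ref{def:RR2}\one.

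For the matrix $L^t$, observe that whenever the $i$-th column of $K^t$ vanishes, exactness of \eqref{eqn:KtLTseq} forces $[\Psi(S_i)] \in \operatorname{image}(L^t) = F_1/F_0$, so $\Psi(S_i)$ is curve-supported. Under the identification of Corollary~\ref{cor:NS}, the map $L^t$ is adjoint (up to sign) to $L_{C_+}$ with respect to the intersection pairing $\NS(X) \times F_1/F_0 \to \ZZ$, so its entries encode the degrees $\deg(T_j\vert_\ell)$ against proper irreducible curves $\ell$. Since each line segment in $\Sigma$ carries a unique Reid's recipe label $j$, and the corresponding torus-invariant curve $\ell_j$ satisfies $\deg(T_j\vert_{\ell_j}) = 1$ and $\deg(T_k\vert_{\ell_j}) = 0$ for $k \neq j$, the $i$-th row of $L^t$ singles out the unique line segment in $\Sigma$ labelled by $i$, matching Definition~\ref{def:RR2}\two.

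The main obstacle is the first paragraph: reconciling two a priori different sign dichotomies, namely the positive-versus-negative split among entries of $K^t$ that comes from sign-coherence via Property~\ref{property}, and the positive-versus-negative split among exponents in the minimal relations produced by Reid's recipe. The bridge between them is the detailed combinatorial description in \cite[Sections~6--7]{Craw05}, which matches positive exponents to vertices labelling a lattice point (``right-hand-side'' bundles) and negative exponents to vertices labelling line segments through it (``left-hand-side'' bundles). The remaining statements about $L^t$ and curve supports are then a routine consequence of adjointness and Corollary~\ref{cor:NS}.
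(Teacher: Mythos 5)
Your route differs from the paper's. The paper's proof of this proposition does not pass through Proposition~\ref{prop:RR1} or the relations of \cite{Craw05} at all: after using Proposition~\ref{prop:GC} (valid since Conjecture~\ref{conj:CL} holds in the toric case) to identify the columns of $K^t$ with the multiplicities $(-1)^{k(i)}\ell_{E_j}\bigl(\mathcal{H}^{k(i)}(\Psi(S_i))\bigr)$, it invokes the derived Reid's recipe result \cite[Theorem~1.2]{CCL17}, which computes the objects $\Psi(S_i)$ explicitly, to identify the relevant surfaces and curves with the Reid's recipe labels in each of the three sign cases. You instead try to deduce the support statements of Definition~\ref{def:RR2} from the (RR1) data, i.e.\ from Proposition~\ref{prop:RR1} and the explicit form of the relations in \cite[Theorem~6.1]{Craw05}, combined with sign-coherence. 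For the two surface cases this is legitimate and is essentially the argument the paper runs later (in the general toric setting) in Theorem~\ref{thm:CHT}; it even has the merit of not relying on \cite{CCL17} for those cases.

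The genuine gap is in the curve case, Definition~\ref{def:RR2}\two. First, a zero column of $K^t$ carries no sign information, so nothing in your argument establishes that $k(i)=0$ there; the paper gets this from \cite[Theorem~1.2]{CCL17}. Second, your key assertion that every marked segment $\ell_j$ satisfies $\deg(T_j\vert_{\ell_j})=1$ and $\deg(T_k\vert_{\ell_j})=0$ for all $k\neq j$ is false as stated: in Example~\ref{exa:1315} the relation $T_4\cong T_1\otimes T_3$ forces $\deg(T_4\vert_C)=1$ on a curve $C$ marked $1$ (where $\deg(T_1\vert_C)=1$ and $\deg(T_3\vert_C)=0$), so a bundle indexed by a vertex other than the marking one can have positive degree. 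What you actually need is weaker but still nontrivial: that for a zero-column vertex $i$ the class $[\Psi(S_i)]\in F_1/F_0$, which is the vector dual to $[T_i]$ under the pairing of Corollary~\ref{cor:NS}, is represented by the unique torus-invariant curve marked $i$, and that this class determines the support (classes of distinct exceptional curves need not be independent in $F_1/F_0$, so ``same class'' does not immediately give ``same support''). Neither point is justified by ``adjointness''; the paper supplies exactly this missing geometric input by citing \cite[Theorem~1.2]{CCL17}, which your proof never uses. Until the curve case is repaired (either by citing that result, as the paper does, or by proving the degree and uniqueness statements you assert), the proof is incomplete.
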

\begin{proof}
  Conjecture~\ref{conj:CL} holds in this case, so Proposition~\ref{prop:GC} shows that the $i^{\text{th}}$ column of $K^t$ records the length of $\mathcal{H}^{k(i)}(\Psi(S_i))$ on each exceptional divisor $E_j$ in $X$. There are three cases, according to the sign of the entries in the $i^{\text{th}}$ column of $K^t$ as follows.
 \begin{enumerate}
     \item[($+$)] If each nonzero entry is positive, then $\supp(\mathcal{H}^0(\Psi(S_i)))$ contains a surface $E_j$ for some $j$. Then \cite[Theorem~1.2]{CCL17} implies that $\supp(\Psi(S_i))=E_j$, where vertex $i$ labels the lattice point defining $E_j$ by Reid's recipe. 
         \item[($0$)] If each entry is zero, then $[\Psi(S_i)]\in \ker(G_{C_+})=F_1/F_0$, so $\Psi(S_i)$ is supported on curves. Now \cite[Theorem~1.2]{CCL17} shows that  $\supp(\Psi(S_i))=\ell_j$, where vertex $i$ marks the line segment defining $\ell_j\cong \mathbb{P}^1$ by Reid's recipe. 
       \item[($-$)] If each nonzero entry is negative, then $\supp(\mathcal{H}^{-1}(\Psi(S_i)))$ contains a surface $E_j$ for some $j$. Again,  \cite[Theorem~1.2]{CCL17} implies that $\supp \Psi(S_i)$ is the union of surfaces $E_j$ defined by lattice points through which line segments in $\Sigma$ labelled $i$ pass. 
 \end{enumerate}
 This completes the proof.
\end{proof}

\begin{remarks}
\begin{enumerate}
\item Conjecture~\ref{conj:CL} holds in this case, so $K^t$ in \eqref{eqn:KtLTseq} is sign-coherent.
\item It is possible to strengthen Definition~\ref{def:RR1} and \ref{def:RR2} slightly by making reference to the entries of $K$ and $K^t$ respectively. We make this explicit in Theorem~\ref{thm:CHT} below.
\end{enumerate}
\end{remarks}

\begin{theorem}
\label{thm:reconciliation}
 The rival geometric interpretations of Reid's recipe stated in Definitions~\ref{def:RR1} and \ref{def:RR2} are equivalent.
\end{theorem}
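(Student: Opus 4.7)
The plan is to exploit the fact that both propositions just established identify the two rival interpretations with matrix data that is, up to transposition, literally the same. More precisely, Proposition~\ref{prop:RR1} shows that interpretation (RR1) is completely encoded by the pair of matrices $(K,L)$ representing the maps in the short exact sequence of diagram~\eqref{eqn:LCdiagram}, while Proposition~\ref{prop:RR2} shows that interpretation (RR2) is completely encoded by the pair of matrices $(K^t,L^t)$ representing the maps in the dual short exact sequence of diagram~\eqref{eqn:GCdiagram}. Since $K$ and $L$ determine $K^t$ and $L^t$ and vice-versa, the two interpretations encode the same data.

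I would carry this out as follows. First, starting from the combinatorial labelling of the toric fan $\Sigma$ produced by Reid's recipe as in Figure~\ref{fig:juniorsimplexintro}, I would invoke Proposition~\ref{prop:RR1} to record the geometric data of (RR1) as the pair $(K,L)$: the columns of $L$ list the coefficients expressing each tautological bundle $T_i$ (for $i\neq 0$) in the distinguished $\ZZ$-basis of $\NS(X)$, while the columns of $K$ list the exponents in the minimal relations in $\NS(X)$ indexed by interior lattice points of $\Sigma$. Next, I would invoke Proposition~\ref{prop:RR2} to show that the sign pattern together with the entries of the columns of $K^t$ encodes, for each $i\neq 0$ with $\supp(\Psi(S_i))$ two-dimensional, the cohomological degree $k(i)$ and the multiplicities of $\mathcal{H}^{k(i)}(\Psi(S_i))$ along each irreducible exceptional surface; while the rows of $L^t$ detect the cases where $\supp(\Psi(S_i))$ is a curve, identifying that curve via its intersection numbers with the tautological bundles.

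To conclude, I would observe that transposition is an involution on matrices, so the data $(K,L)$ and $(K^t,L^t)$ determine each other; hence the geometric information of (RR1) and (RR2) are mutually reconstructible. In particular, this confirms via Corollary~\ref{cor:tilting} that one never needs to invoke the specific combinatorial labelling of $\Sigma$ as an intermediary — the sole content of the recipe is the pair $(K,L)$, and (RR1) and (RR2) are just the two sides of Gale duality acting on this pair.

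The only non-trivial point in this argument is the verification that the matrix $K$ produced combinatorially by Reid's recipe (listing relations between tautological bundles) really does give the map whose transpose encodes the supports of the $\Psi(S_i)$; but this is exactly what Corollary~\ref{cor:GC} guarantees once Conjecture~\ref{conj:CL} is known, which is the case in the abelian $\SL(3,\kk)$-setting thanks to \cite[Lemma~3.1]{CL09}. Thus the main obstacle has already been absorbed into Propositions~\ref{prop:RR1} and \ref{prop:RR2}, and the reconciliation theorem itself reduces to the tautological observation that $(K,L)\leftrightarrow (K^t,L^t)$ is a bijection.
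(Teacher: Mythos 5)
Your proposal is correct and follows the paper's own argument exactly: both reduce the theorem to Propositions~\ref{prop:RR1} and \ref{prop:RR2}, after which the equivalence is just the observation that the pair $(K,L)$ and the transposed pair $(K^t,L^t)$ determine one another. The genuine content is indeed already absorbed into those two propositions (where Conjecture~\ref{conj:CL} is known in the abelian case), so no further work is needed.
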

\begin{proof}
 Given Propositions~\ref{prop:RR1} and \ref{prop:RR2}, this is nothing more than the statement that knowing the matrices $K$ and $L$ is equivalent to knowing the transpose matrices $K^t$ and $L^t$
\end{proof}

\begin{remark}
As noted in Example~\ref{exa:1315}, the support of any object $\Psi(S_i)$ for $i\neq 0$ can be read off immediately from the $i^{\text{th}}$ column of the matrix $K^t$ simply by invoking Proposition~\ref{prop:GC}. Note that $K^t$ can be written down immediately using only the relations listed in \cite[Theorem~6.1]{Craw05}.
\end{remark}

 \subsection{Equivalence of two conjectures in the general toric case}
 Consider the general toric case, where $\Spec R$ is any Gorenstein toric threefold and $A\cong \kk Q/I$ is a noncommutative crepant resolution of $R$ as in Example~\ref{exa:dimers}. 
 
 In this context, \cite{CHT21} considersthe toric crepant resolution $X=\mathcal{M}_\theta(A,v)\to \Spec R$ for $\theta\in C_+$ from \eqref{eqn:C+}. They introduce a recipe marking the fan $\Sigma$ of $X$ with nonzero vertices of $Q$ in a manner that generalises Reid's recipe and is compatible with the calculation of those objects $\Psi(S_i)$ indexed by nonzero vertices $i\in Q_0$ for which $k(i)=0$, see \cite[Theorems~1.1, 1.4]{BCQ15}. However, this does not provide a complete analogue of Definition~\ref{def:RR2} in this case, because computing even the supports of the objects $\Psi(S_i)$ for which $k(i)=-1$ is challenging.
 
 Nevertheless, \cite[Conjecture~5.5]{CHT21} formulates a conjectural description of the supports of the objects $\Psi(S_i)$ satisfying $k(i)=-1$; a second conjecture describes a set of minimal relations between the tautological bundles on $\mathcal{M}_\theta(A,v)$, suggesting a concrete analogue of Definition~\ref{def:RR1} in this case; see \cite[Conjecture~5.7]{CHT21}.  We cannot yet prove either conjecture, but we prove next that these conjectures are equivalent. To state the result, for each nonzero vertex 
 $i\in Q_0$ and interior lattice point $\rho\in \Sigma(1)$, we define $n(i,\rho)$ to be the number of interior line segments in $\Sigma$ marked by vertex $i$ that touch lattice point $\rho$. It is convenient to define $N(i,\rho):=\max\{0,n(i,\rho)-1\}$.

 \begin{theorem}
 \label{thm:CHT}
 Let $A$ be a CY-3 toric order. The conjectural rival geometric interpretations of Reid's recipe stated in \cite[Conjectures~5.5 and 5.7]{CHT21} are equivalent. Explicitly, the statement
 \[
 \text{``For each vertex $i\neq 0$ and lattice point $\rho\in \Sigma(1)$, we have }\ell_{E_\rho}\big(\mathcal{H}^{-1}(\Psi_\theta(S_i)\big)
     = N(i,\rho).\text{''}
     \]
  holds if and only if the statement
\[
\text{``For each lattice point $\rho\in \Sigma(1)$, there is a relation }
\bigotimes_{i\text{ marks }\rho} 
T_{i}\cong
\bigotimes_{i\in Q_0} T_{i}^{\otimes N(i,\rho)}.\text{''} 
\]
holds.
 \end{theorem}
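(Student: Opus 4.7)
The plan is to exploit the transpose identity $(K^t)_{\rho,i}=K_{i,\rho}$ built into the Gale duality of Lemma~\ref{lem:CIdual}. Property~\ref{property} holds for a CY-3 toric order by~\cite[Theorem~1.1]{BCQ15}, so Proposition~\ref{prop:GC} applies and expresses every entry of $K^t$ in the basis $\{[\Psi(S_i)]\mid i\neq 0\}$ of $F_2/F_0$ and the basis $\{[\mathcal{O}_{E_\rho}]\}$ of $F_2/F_1$ as
\[
(K^t)_{\rho,i}\;=\;(-1)^{k(i)}\,\ell_{E_\rho}\!\bigl(\mathcal{H}^{k(i)}(\Psi(S_i))\bigr).
\]
Introducing the indicator $\chi_\rho(i)$ that is $1$ iff $i$ marks $\rho$ and $0$ otherwise, and rearranging the conjectured relation in Conjecture~5.7 of \cite{CHT21} into $\bigotimes_{i\in Q_0}T_i^{\chi_\rho(i)-N(i,\rho)}\cong\mathcal{O}_X$, that conjecture becomes equivalent to the matrix identity $K_{i,\rho}=\chi_\rho(i)-N(i,\rho)$; for this translation I would invoke that the conjectured relations, one per interior lattice point, form a $\ZZ$-basis of $\ker(L_{C_+})$ by a rank-count in~\cite{CHT21} generalising \cite[Proposition~6.3]{Craw05}.

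With both descriptions of $(K^t)_{\rho,i}$ in hand, I would match them via the transpose identity using the sign-coherence trichotomy of Corollary~\ref{cor:GC}. In the case where the $i$-th column of $K^t$ contains a positive entry, we have $k(i)=0$ with $\Psi(S_i)\cong\mathcal{O}_{E_{\rho_0}}$ by~\cite[Theorem~1.1]{BCQ15}, and Reid's recipe of~\cite{CHT21} declares that $i$ labels the single lattice point $\rho_0$; so $\chi_\rho(i)=\delta_{\rho,\rho_0}$, $N(i,\rho)=0$, and both descriptions of $(K^t)_{\rho,i}$ reduce unconditionally to $\delta_{\rho,\rho_0}$. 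In the case where the $i$-th column of $K^t$ vanishes, $k(i)=0$ with $\Psi(S_i)$ supported on a curve, while $i$ labels line segments but no lattice point and $N(i,\rho)=0$ for every $\rho$, so both descriptions vanish unconditionally. In the remaining case $k(i)=-1$ with $\chi_\rho(i)\equiv 0$, Conjecture~5.5 becomes $(K^t)_{\rho,i}=-N(i,\rho)$ and Conjecture~5.7 becomes $K_{i,\rho}=-N(i,\rho)$, and the transpose identity makes these statements equivalent term-by-term, which delivers the theorem.

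The main obstacle is bookkeeping rather than geometry: one must verify that Reid's recipe of~\cite{CHT21} cleanly partitions the nonzero vertices of $Q$ into \emph{lattice-point} vertices (for which $N(i,\rho)\equiv 0$) and \emph{line-segment} vertices (for which $\chi_\rho(i)\equiv 0$), and that within the latter the vanishing of $N(i,\rho)$ for all $\rho$ characterises the $k(i)=0$ subcase identified above. Once these combinatorial features of the recipe are pinned down, the equivalence of the two conjectural statements is nothing more than the transposition identity supplied by Gale duality.
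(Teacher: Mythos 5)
Your overall skeleton is the paper's: use Proposition~\ref{prop:GC} (legitimate here since Conjecture~\ref{conj:CL} holds by \cite[Theorem~1.1]{BCQ15}) to read the $k(i)=-1$ entries of $K^t$ as the lengths $\ell_{E_\rho}(\mathcal{H}^{-1}(\Psi(S_i)))$, pin down the $k(i)=0$ entries combinatorially, and then pass between the two conjectures by transposition. However, there is a genuine gap in your translation of \cite[Conjecture~5.7]{CHT21} into the matrix identity $K_{i,\rho}=\chi_\rho(i)-N(i,\rho)$. You justify it by invoking ``a rank-count in \cite{CHT21} generalising \cite[Proposition~6.3]{Craw05}'' to the effect that the conjectured relations form a $\ZZ$-basis of $\ker(L_{C_+})$. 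No such result is available: the minimality statement of \cite[Proposition~6.3]{Craw05} is proved only for abelian subgroups of $\SL(3,\kk)$, and in the general dimer setting the corresponding assertions are precisely what is conjectural in \cite{CHT21}. Worse, even if the relations were known to form \emph{some} $\ZZ$-basis of $\ker(L_{C_+})$, that would not identify the exponent vector of the relation attached to $\rho$ with the specific column of $K$ indexed by $\rho$: in this paper $K$ is not an arbitrary kernel matrix but the one dual, via the Gale duality of Lemma~\ref{lem:CIdual} and diagram \eqref{eqn:GCdiagram}, to the basis $\{[\mathcal{O}_{E_\rho}]\}$ of $F_2/F_1$, and your term-by-term comparison in the $k(i)=-1$ case needs exactly this identification. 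So as written, both implications of your argument rest on an unsupported (and partly circular) step.

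The paper's proof shows how to avoid this. For the direction ``lengths $\Rightarrow$ relations'' it computes the $(\rho,i)$ entries of $K^t$ outright: $-N(i,\rho)$ when $k(i)=-1$ (by Proposition~\ref{prop:GC} and the hypothesis), and, by \cite[Proposition~5.2]{CHT21}, $1$ or $0$ according to whether $i$ marks $\rho$ when $k(i)=0$; since every column of $K$ lies in $\ker(L_{C_+})$ by construction, each column automatically \emph{is} a relation of the conjectured shape, with no basis claim needed. For the converse one reverses this comparison, again using the pinned-down $k(i)=0$ block of $K^t$ from \cite[Proposition~5.2]{CHT21}, rather than appealing to the conjectured relations being a basis. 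If you want to keep your formulation, you must either supply the missing identification (for instance by proving that a kernel element is determined by its coordinates at the $k(i)=0$ vertices, i.e.\ an independence statement for the remaining $\det(T_i)$ in $\NS(X)$) or restructure along the paper's lines; the citation you lean on cannot carry that weight.
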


 \begin{remark}
 We have edited the statement of both conjectures slightly. Indeed:
 \begin{enumerate}
     \item[\one] \cite[Conjecture~5.5]{CHT21} describes only the support of $\mathcal{H}^{-1}(\Psi(S_i))$, or equivalently, the union of surfaces $E_\rho$ for which $\ell_{E_\rho}(\mathcal{H}^{-1}(\Psi(S_i)))\neq 0$, in terms of Reid's recipe. In the above, we simply provide the numerical value for this length.
     \item[\two] \cite[Conjecture~5.7]{CHT21} writes the exponent on the right hand side as $n(i,\rho)-1$, but the authors had in mind that $i$ marked at least one line segment touching $\rho$ (making $n(i,\rho)\geq 1$). We correct this omission by introducing $N(i,\rho)$ above.
 \end{enumerate}
 \end{remark}
 
 \begin{proof}
 Fix an interior lattice point $\rho\in \Sigma(1)$. It suffices to prove that
  \[
 \ell_{E_j}(\mathcal{H}^{-1}(\Psi(S_i)))= N(i,\rho) \text{ for all }i\in Q_0\setminus \{0\}\iff 
 \bigotimes_{i\text{ marks }\rho} T_{i}\cong
\bigotimes_{i\in Q_0}
T_{i}^{N(i,\rho)}.
\]
Suppose the left hand side holds. Since Conjecture~\ref{conj:CL} holds by \cite[Theorem~1.1]{BCQ15}, we deduce that the matrix $K^t$ has $(\rho,i)$ entry equal to $-N(i,\rho)$ whenever $k(i)=-1$. But we already know  by \cite[Proposition~5.2]{CHT21} that when $k(i)=0$,  the matrix $K^t$ has $(\rho,i)$ entry equal to $1$ if $i$ marks vertex $\rho$, and 0 otherwise. Taking the transpose, we see that the column of $K$ indexed by $\rho$ has an entry of 1 whenever $i$ marks vertex $\rho$, and an entry of $-N(i,\rho)$ when $k(i)=-1$. Thus, there is a relation
\[
\bigotimes_{i\text{ marks }\rho} T_{i}\otimes \bigotimes_{\{i\in Q_0 \mid k(i)=-1\}}
T_{i}^{-N(i,\rho)}\cong \mathcal{O}_X.
\]
 We obtain the desired relation because $N(i,\rho)=0$ for each $i$ satisfying $k(i)\neq -1$. The converse holds by reversing the logic of this argument.
 \end{proof}

\begin{example}
 \label{exa:longhex}
 Consider the toric algebra $A\cong \kk Q/I$ arising as the path algebra modulo relations of the consistent dimer model shown in \cite[Figure~2(a)]{CHT21}. The labelling of the toric fan $\Sigma$ by nonzero vertices of the quiver $Q$ is reproduced below.
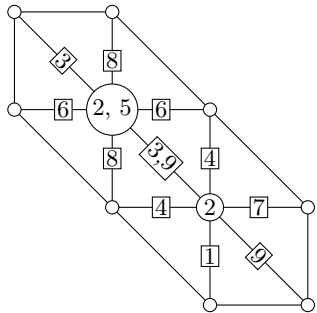
\begin{figure}[!ht]
\centering
\begin{tikzpicture}[xscale=1.3,yscale=1.3] 
\draw (0,0) -- (0,1) -- (-1,2) -- (-2,3) -- (-3,3) -- (-3,2) -- (-2,1) -- (-1,0) -- (0,0); 
\draw (0,0) -- node [rectangle,draw,fill=white,sloped,inner sep=1pt] {{\footnotesize 9}}(-1,1) -- node [rectangle,draw,fill=white,sloped,inner sep=1pt] {{\footnotesize 3,9}}(-2,2) -- node [rectangle,draw,fill=white,sloped,inner sep=1pt] {{\footnotesize 3}}(-3,3); 
\draw (0,1) -- node [rectangle,draw,fill=white,sloped,inner sep=1pt] {{\footnotesize 7}}(-1,1) -- node [rectangle,draw,fill=white,sloped,inner sep=1pt] {{\footnotesize 4}}(-2,1); 
\draw (-1,2) -- node [rectangle,draw,fill=white,sloped,inner sep=1pt] {{\footnotesize 6}}(-2,2) -- node [rectangle,draw,fill=white,sloped,inner sep=1pt] {{\footnotesize 6}}(-3,2); 
\draw (-2,3) -- node [rectangle,draw,fill=white,sloped,inner sep=1pt,rotate=90] {{\footnotesize 8}}(-2,2) -- node [rectangle,draw,fill=white,sloped,inner sep=1pt,rotate=90] {{\footnotesize 8}}(-2,1); 
\draw (-1,2) -- node [rectangle,draw,fill=white,sloped,inner sep=1pt,rotate=90] {{\footnotesize 4}}(-1,1) -- node [rectangle,draw,fill=white,sloped,inner sep=1pt,rotate=90] {{\footnotesize 1}}(-1,0);
\draw (0,0) node[circle,draw,fill=white,minimum size=5pt,inner sep=1pt] {{}};
\draw (0,1) node[circle,draw,fill=white,minimum size=5pt,inner sep=1pt] {{}};
\draw (-2,3) node[circle,draw,fill=white,minimum size=5pt,inner sep=1pt] {{}};
\draw (-3,3) node[circle,draw,fill=white,minimum size=5pt,inner sep=1pt] {{}};
\draw (-3,2) node[circle,draw,fill=white,minimum size=5pt,inner sep=1pt] {{}};
\draw (-1,0) node[circle,draw,fill=white,minimum size=5pt,inner sep=1pt] {{}};
\draw (-2,1) node[circle,draw,fill=white,minimum size=5pt,inner sep=1pt] {{}};
\draw (-2,2) node[circle,draw,fill=white,minimum size=5pt,inner sep=1pt] {{\footnotesize 2, 5}};
\draw (-1,1) node[circle,draw,fill=white,minimum size=5pt,inner sep=1pt] {{\footnotesize 2}};
\draw (-1,2) node[circle,draw,fill=white,minimum size=5pt,inner sep=1pt] {{}};
\end{tikzpicture}
\caption{Combinatorial Reid's recipe for a dimer model example}
\label{fig:LongHexRR}
\end{figure}
It was computed in \emph{ibid}.\ that there are two minimal relations between the tautological line bundles, one for each interior lattice point in Figure~\ref{fig:LongHexRR}, namely $T_2\otimes T_5\cong T_3\otimes T_6\otimes T_8$ and $T_2\cong T_4\otimes T_9$. We compute that
 \[
\setlength{\arraycolsep}{2pt}
   \renewcommand{\arraystretch}{0.8}
K^t=\mbox{\footnotesize$\left(\begin{array}{ccrrcrcrr}
0 & 1 &  -1 &  0 & 1  & -1 & 0 & -1 & 0 \\
0 & 1 &  0 & -1 & 0  & 0 & 0 & 0 & -1    
\end{array}\right)$}
\]
 The two relations have the form suggested by the second statement from Theorem~\ref{thm:CHT}, so the first statement from Theorem~\ref{thm:CHT} provides some geometric information about the objects $\Psi(S_i)$ satisfying $k(i)=-1$ purely in terms of the combinatorial recipe from \cite{CHT21}.
 \end{example}

\begin{remark}
 Tapia Amador~\cite{TapiaAmador15} contains many examples for which \cite[Conjecture~5.7]{CHT21} has been verified, so \cite[Conjecture~5.5]{CHT21} holds too.
\end{remark}
 
 \section{Reid's recipe for noncommutative crepant resolutions}
 We conclude by outlining what it is required in order to calculate Reid's recipe for any noncommutative crepant resolution satisfying Assumption~\ref{ass:KrullSchmidt}.

 \subsection{The statement}
 Theorem~\ref{thm:GC} suggests how to formulate Reid's recipe beyond the toric case. For this,
 let $R$ be any finitely generated $\kk$-algebra that is normal, Gorenstein and of Krull dimension three. Suppose there is an NCCR
 $A\cong \kk Q/I$ of $R$ satisfying Assumption~\ref{ass:KrullSchmidt}, such that $v_0=1$ for some vertex $0\in Q_0$. Let $C_+$ be the GIT chamber containing the cone \eqref{eqn:C+}.
 
 \begin{definition}
 \label{def:RR}
  For $\theta\in C_+$, \emph{Reid's recipe} for $X=\mathcal{M}_\theta(A,v)$ is the calculation of the matrices $K$ and $L$ representing the maps in the short exact sequence 
  \begin{equation}
 \begin{tikzcd}
0 \ar[r] & \ker(L_{C_+})\ar[r] & \Theta\ar[r,"L_{C_+}"]&  \NS(X) \ar[r]&  0,
\end{tikzcd}
  \end{equation}
 or equivalently, the calculation of the transpose matrices $K^t$ and $L^t$ defining the dual sequence
  \begin{equation}
 \begin{tikzcd}
0 & F_2/F_1\ar[l] & F_2/F_0\ar[l,swap,"G_{C_+}"] &  F_1/F_0 \ar[l]&  0.\ar[l]
\end{tikzcd}
  \end{equation}
 \end{definition}

 \begin{remark}
     As a warning, the support of the objects $\Psi(S_i)$ for $i\neq 0$ can be computed directly from $K^t$ only when the assumptions of Corollary~\ref{cor:GC} hold.
     When this is the case, the minimal relations between the determinants of the tautological bundles take an especially nice form: for each $1\leq j\leq n$, there is a minimal relation of the form 
     \[
     \bigotimes_{\{i\in Q_0\mid k(i)=0\}} \det(T_i)^{K_{i,j}} \cong \bigotimes_{\{i\in Q_0\mid k(i)=-1\}} \det(T_i)^{-K_{i,j}}, 
     \]
     where $K_{i,j}$ is the $(i,j)$-entry of the matrix $K$.
  \end{remark}

 \subsection{Non-toric examples}
 We conclude by calculating the matrices $K$ and $L$ for the non-toric examples of Reid's recipe that appear in the literature.
 
 \begin{example}[\textbf{Homological MMP}]
 \label{exa:HomMMPcont}
 Continuing Example~\ref{exa:HomMMP}, combine the description of $\NS(X)$ 
 from \cite[Lemma~3.4.3]{VdB04} with the moduli construction of  \cite[Corollary~5.2.4]{Karmazyn17} to see that the determinant line bundles $\det(T_i) = \det(\mathcal{N}_i^\vee) = \det(\mathcal{N}_i)^{-1}$ for $i\neq 0$ provide a $\ZZ$-basis of $\NS(X)$ dual to the classes of the exceptional curves $C_1, \dots, C_n$. Thus, we may choose bases such that $L$ is the identity matrix, so $K$ is the zero matrix. The dual statement is even clearer: $X$ contains no proper surfaces, so $F_2/F_1=0$, forcing $G_{C_+}$ to be the zero map and the inclusion $F_1/F_0\to F_2/F_0$ to be the isomorphism that sends the class of the curve $C_i$ to  $\Psi(S_i)=\mathcal{O}_{C_i}(-1)$ for $i\neq 0$. 
 \end{example}

 \begin{remark}
 The same conclusions as Examples~\ref{exa:HomMMP} and \ref{exa:HomMMPcont} can be drawn for $X=\Gamma\text{-Hilb}(\mathbb{A}^3)$ where   $\Gamma\subset \SO(3)$ is any finite subgroup; compare Nolla de Celis--Sekiya~\cite{NollaSekiya17}.
 \end{remark}
 
 \begin{example}[\textbf{Bento box}]
 \label{exa:BentoBox}
  Let $\Gamma\subset \SL(3,\kk)$ be the dihedral group of order 24 generated by 
 \[
 \small{\begin{pmatrix} \frac{1}{12} & 0 & 0 \\ 
 0 & \frac{7}{12} & 0 \\ 0 & 0 & \frac{4}{12}
 \end{pmatrix}}\normalsize{\quad\text{and}\quad}\small{
  \begin{pmatrix} 
 0 & 1 & 0 \\ 
 -1 & 0 & 0 \\ 
 0 & 0 & 1
 \end{pmatrix}}
 \]
 Following 
 Nolla de Celis~\cite{NolladeCelis21}, we list the 15 irreducible representations as
 \begin{equation}
    \label{eqn:irreps}
 \{\rho_{0^+}, \rho_{0^-}, \rho_{1}, \rho_{2^+}, \rho_{2^-}, \rho_{4^+}, \rho_{4^-}, \rho_{5}, \rho_{6^+}, \rho_{6^-}, \rho_{8^+}, \rho_{8^-}, \rho_{9}, \rho_{10^+}, \rho_{10^-}\},
  \end{equation}
 where $\rho_1, \rho_5, \rho_9$ have dimension two, every other irreducible representation has dimension one, and where $\rho_{0^+}$ is the trivial representation. The preprojective algebra $A$ of $\Gamma$ is an NCCR of $R=\kk[x,y,z]^\Gamma$. For each irreducible representation $\rho_i$ from the list \eqref{eqn:irreps}, write $T_i$ for the corresponding tautological bundle of rank $v_i:=\dim\rho_i$ on $X=\mathcal{M}_\theta(A,v)$ for $\theta\in C_+$. The exceptional divisor of $f_\theta\colon X\to \Spec R$ has five irreducible components denoted $E_1, \dots, E_5$.
 
 The key calculation of \emph{ibid}.\ associates one relation between the determinants of the tautological bundles for every such surface, namely
 \begin{align}
 T_{8^+} & \cong  T_{4^+}\otimes T_{4^+} & \text{for }E_1,\nonumber\\
 T_{6^-} & \cong  T_{2^-}\otimes T_{4^+} & \text{for }E_2, \nonumber\\
 T_{6^+} & \cong  T_{2^+}\otimes T_{4^+} & \text{for }E_3,\label{eqn:relations}\\
 T_{0^-} & \cong  T_{4^+}\otimes T_{8^-} & \text{for }E_4, \nonumber \\
 \det(T_5) & \cong  T_{4^+}\otimes \det(T_9) & \text{for }E_5.\nonumber
 \end{align}
 Corollary~\ref{cor:tilting} implies that $\{\det(T_{1}), T_{2^+}, T_{2^-}, T_{4^+}, T_{4^-}, T_{8^-}, \det(T_{9}), T_{10^+}, T_{10^-}\}$ provides a $\ZZ$-basis of $\NS(X)$, completing the calculation of Reid's recipe in the sense of Definition~\ref{def:RR1}. 
 
 To compute the matrices $K$ and $L$, list the basis elements $[P_i]\in \Theta$ for $i\neq \rho_{0^+}$ in the same order as in \eqref{eqn:irreps} and then use the relations \eqref{eqn:relations} to compute that 
 \[
\setlength{\arraycolsep}{2pt}
   \renewcommand{\arraystretch}{0.8}
L=\mbox{\footnotesize$\left(\begin{array}{cccccccccccrrrc}
0 & 1 & 0 & 0 & 0 & 0 & 0 & 0 & 0 & 0 & 0 & 0 & 0 & 0 \\
0 & 0 & 1 & 0 & 0 & 0 & 0 & 1 & 0 & 0 & 0 & 0 & 0 & 0 \\
0 & 0 & 0 & 1 & 0 & 0 & 0 & 0 & 1 & 0 & 0 & 0 & 0 & 0 \\
1 & 0 & 0 & 0 & 1 & 0 & 1 & 1 & 1 & 2 & 0 & 0 & 0 & 0 \\
0 & 0 & 0 & 0 & 0 & 1 & 0 & 0 & 0 & 0 & 0 & 0 & 0 & 0 \\
1 & 0 & 0 & 0 & 0 & 0 & 0 & 0 & 0 & 0 & 1 & 0 & 0 & 0 \\
0 & 0 & 0 & 0 & 0 & 0 & 1 & 0 & 0 & 0 & 0 & 1 & 0 & 0 \\
0 & 0 & 0 & 0 & 0 & 0 & 0 & 0 & 0 & 0 & 0 & 0 & 1 & 0 \\
0 & 0 & 0 & 0 & 0 & 0 & 0 & 0 & 0 & 0 & 0 & 0 & 0 & 1  
\end{array}\right).$}
\]
For example, the seventh column expresses $\det(T_5)$ as $T_{4^+}\otimes\det(T_9)$. Each of the five relations from \eqref{eqn:relations} determines a column of the matrix $K$, and again we choose to write its transpose
 \begin{equation}
    \label{eqn:Ktbento}
\setlength{\arraycolsep}{2pt}
   \renewcommand{\arraystretch}{0.8}
K^t=\mbox{\footnotesize$\left(\begin{array}{ccrrrrrrrrrrrr}
0 & 0 &  0 &  0 & -2  & 0 & 0 & 0 & 0 & 1 &  0 &  0 & 0 & 0 \\
0 & 0 &  0 & -1 & -1  & 0 & 0 & 0 & 1 & 0 &  0 &  0 & 0 & 0 \\
0 & 0 & -1 &  0 & -1  & 0 & 0 & 1 & 0 & 0 &  0 &  0 & 0 & 0 \\
1 & 0 &  0 &  0 & -1  & 0 & 0 & 0 & 0 & 0 & -1 &  0 & 0 & 0 \\
0 & 0 &  0 &  0 & -1  & 0 & 1 & 0 & 0 & 0 &  0 & -1 & 0 & 0    
\end{array}\right)$.}
\end{equation}
 Note that $K^t$ is sign-coherent. Inspecting the signs in each column leads to a trichotomy:
 
 \smallskip
 
 \noindent \textsc{Case $(+)$}: The columns of $K^t$ containing one positive entry are indexed by the representations $\rho_{8^+}, \rho_{6^-}, \rho_{6^+}, \rho_{0^-}, \rho_{5}$ `marking' the surfaces $E_1, \dots, E_5$ respectively in \cite[Figure~6]{NolladeCelis21}. Assuming that  Conjecture~\ref{conj:CLintro}\one\ holds, then $k(i)=0$ for these $i$, and we deduce from Proposition~\ref{prop:GC} that
 \[
 \supp(\Psi(S_{i})) = E_j
 \]
 in each case. Based on the toric case, it is natural to anticipate that $\Psi(S_i) \cong \det(T_i)^{-1}\vert_{E_j}$ here. 
  
  Each $\rho_i$ marks the surface $E_j$ in \cite[Figure~6]{NolladeCelis21} precisely because certain $\rho_i$-semi-invariant functions lie in the socle of the $\Gamma$-clusters parametrised by these surfaces. We know $\theta_i:=\theta(\rho_i)>0$ for $\theta\in C_+$ and every such vertex $\rho_i$. The definition of $\theta$-semi-stability implies that these $\Gamma$-clusters become strictly semi-stable once we deform $\theta$ until $\theta_i=0$, so the hyperplane $\{\theta\in \Theta \mid \theta_i=0\}$ contains a wall of $C_+$ for each $i\in \{\rho_{8^+}, \rho_{6^-}, \rho_{6^+}, \rho_{0^-}, \rho_{5}\}$. This is consistent with Conjecture~\ref{conj:CLintro}\two;
 
 \smallskip
 
 \noindent \textsc{Case $(0)$}: The four columns of $K^t$ that are the zero vector indicate that $\Psi(S_i)$ is supported on curves for the corresponding vertices $i\in \{\rho_1, \rho_{4^-}, \rho_{10^+}, \rho_{10^-}\}$; in each case, analysing $L^t$ shows that $\supp(\Psi(S_i))$ is the unique curve on which $\det(T_i)$ has degree one.
 
 We claim that for every such $i$, the hyperplane $\{\theta\in \Theta \mid \theta_i=0\}$ contains a wall of $C_+$. Indeed, analysing \cite[Table~7]{NolladeCelis21} indicates that, for example, $\rho_1$-semi-invariant functions lie in the socle of the $\Gamma$-cluster parametrised by the origins in charts denoted $U_4$ and $U_{14}$ (and for every cluster along the $\mathbb{P}^1$ joining these two points). As in \textsc{Case (+)} above, the clusters parametrised by this curve become strictly semi-stable for $\theta$ when $\theta_i=0$ and $\theta_j>0$ for $j\neq i$, proving the claim.  Therefore, Conjecture~\ref{conj:CLintro}\two\ holds if $k(i)=0$ for each $i\in \{\rho_1, \rho_{4^-}, \rho_{10^+}, \rho_{10^-}\}$, and it is natural to expect this in light of the toric case;

 \smallskip
 
 \noindent \textsc{Case $(-)$}:  The columns of $K^t$ containing at least one negative entry are those indexed by $\rho_{4^+}, \rho_{2^-}, \rho_{2^+}, \rho_{8^-}, \rho_{9}$. If we assume Conjecture~\ref{conj:CLintro}\one, then we deduce that 
     \begin{eqnarray*}
E_1\cup E_2\cup E_3\cup E_4\cup E_5 & = & \supp\big(\Psi(S_{4^+})\big),\\
E_2 & = & \supp\big(\Psi(S_{2^-})\big),\\
E_3 & = & \supp\big(\Psi(S_{2^+})\big)),\\
E_4 & = & \supp\big(\Psi(S_{8^-})\big),\\
E_5 & = & \supp\big(\Psi(S_{9})\big).
\end{eqnarray*}
 Notice that the indices on the right-hand side are precisely those appearing on the right-hand sides of the relations \eqref{eqn:relations}, while the surface $E_j$ appears in the support of $\Psi(S_i)$ precisely when $\det(T_i)$ appears in the relation from  \eqref{eqn:relations} indexed by $E_j$. None of these five representations appear in the `Socle' column of \cite[Table~7]{NolladeCelis21}, so no $\Gamma$-cluster becomes strictly semistable if we deform $\theta$ from $C_+$ into any of the hyperplanes $\{\theta\in \Theta \mid \theta_i=0\}$. Thus, these hyperplanes do not contain any GIT walls of $C_+$; this too is consistent with Conjecture~\ref{conj:CLintro}\two\ since $k(i)=-1$.
  
\end{example}

 \begin{example}[\textbf{Trihedral case}]
 \label{exa:trihedralNdC}
 A very similar analysis can be performed for the trihedral subgroup $\Gamma\subset \SL(3,\kk)$ described in \cite[Section~4]{NolladeCelis21}. In this case,  the supporting hyperplanes of $C_+$ of the form $\{\theta\in \Theta \mid \theta_i=0\}$ are those indexed by the irreducible representations denoted $\rho_{0'}, \rho_{0''}, V_1, V_7$, see \cite[Table~10]{NolladeCelis21}. Again, these correspond to the columns of $K^t$ that contain no negative entries which is consistent with Conjecture~\ref{conj:CLintro}\two. We leave the details to the reader.
 \end{example}
  
\bibliographystyle{plain}

 \end{document}